\theoremstyle{plain}
   \def\MR#1{}
\newtheorem{thm}{Theorem}[section]
\newtheorem{lemma}[thm]{Lemma}
\newtheorem{prop}[thm]{Proposition}
\newtheorem{cor}[thm]{Corollary}
\newtheorem{THM}{Theorem}
\theoremstyle{remark}
\newtheorem{remark}[thm]{Remark}
\newtheorem{example}[thm]{Example}
\newcommand{\mb}{\mathbb}
\newcommand{\mc}{\mathcal}
\newcommand{\C}{\mb C}
\newcommand{\F}{\mc F}
\newcommand{\G}{\mc G}
\newcommand{\Mer}{\mathscr M}
\newcommand{\kerBott}{\mathscr X}
\newcommand{\E}{\mathsf E}
\newcommand\restr[2]{{
  \left.\kern-\nulldelimiterspace 
  #1 
  \vphantom{\big|} 
  \right|_{#2} 
  }}
\newcommand*\bola[1]{\tikz[baseline=(char.base)]{
    \node[shape=circle,draw,inner sep=0.5pt] (char) {\small{#1}};}}
\DeclareMathOperator{\codim}{codim}
\DeclareMathOperator{\sing}{sing}
\DeclareMathOperator{\Hom}{Hom}
\DeclareMathOperator{\trdeg}{tr\, deg_{\mathbb C}}
\newcommand{\ie}{{\it{i.e. }}}
\DeclareMathOperator{\Sym}{Sym}
\numberwithin{equation}{section}
\numberwithin{equation}{section}       
\title[Unlikely intersections of foliations]{Unlikely intersections of codimension one foliations}
\author{Gabriel Santos Barbosa}
\author{Jorge Vitório Pereira}
\date{\today}
\begin{document}

\begin{abstract}
    We study families of singular holomorphic foliations on complex projective manifolds whose total intersection defines a foliation of unexpectedly low codimension. 
\end{abstract}

\maketitle
\setcounter{tocdepth}{1}
\tableofcontents

\section{Introduction}
In \cite{cerveau2002pinceaux}, D. Cerveau proved that a pencil of codimension one singular holomorphic foliations on $\mathbb P^3$ is either the pull-back under a rational map of a pencil of singular holomorphic foliations on $\mathbb P^2$, or that every foliation in the pencil is singularly transversely affine. The present work originated from our attempts to understand and extend this result to broader settings. Our first observation is that the natural analogue of Cerveau's theorem holds on arbitrary projective manifolds.

\begin{THM}\label{THM:pencil}
    Let $X$ be a projective manifold and $\omega_0, \omega_1$ be two rational $1$-forms such that $\omega_0 \wedge \omega_1 \neq 0$. If for every $(s:t) \in \mathbb P^1$, the $1$-form $s \omega_0 + t \omega_1$  is integrable, defining a foliation $\F_{(s:t)}$, then at least one of the following assertions hold:
    \begin{enumerate}
        \item There exists a projective surface $Y$, a dominant rational map $\pi : X \dashrightarrow Y$, and foliations $\mathcal G_{(s:t)}$ on $Y$ such that
        $\F_{(s:t)} = \pi^* \G_{(s:t)}$ for every $(s:t) \in \mathbb P^1$.
        \item For every $(s:t) \in \mathbb P^1$, the foliation $\F_{(s:t)}$ is singularly transversely affine.
    \end{enumerate}
\end{THM}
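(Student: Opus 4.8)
The plan is to concentrate all the integrability of the pencil into a single auxiliary $1$-form and then branch according to whether that form is closed. Writing $\omega_{(s:t)} = s\omega_0 + t\omega_1$ and expanding $\omega_{(s:t)}\wedge d\omega_{(s:t)} = 0$ as a polynomial identity in $(s,t)$, I would read off the three relations $\omega_0\wedge d\omega_0 = 0$, $\omega_1\wedge d\omega_1 = 0$ and $\omega_0\wedge d\omega_1 + \omega_1\wedge d\omega_0 = 0$. After saturating (discarding common codimension-one components, so the singular sets have codimension $\ge 2$), the meromorphic de Rham--Saito division lemma applied to the first two relations produces $1$-forms $\eta_0,\eta_1$ with $d\omega_i = \eta_i\wedge\omega_i$. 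The third relation then reads $\omega_0\wedge\omega_1\wedge(\eta_0-\eta_1)=0$, so $\eta_0-\eta_1 = a\omega_0 + b\omega_1$; replacing $\eta_0$ by $\eta_0 - a\omega_0$ shows that there is a \emph{unique} rational $1$-form $\eta$ with $d\omega_0 = \eta\wedge\omega_0$ and $d\omega_1 = \eta\wedge\omega_1$, hence $d\omega_{(s:t)} = \eta\wedge\omega_{(s:t)}$ for every $(s:t)$.

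Differentiating $d\omega_i = \eta\wedge\omega_i$ gives $d\eta\wedge\omega_i = 0$ for $i=0,1$, and dividing once more yields a rational function $g$ with $d\eta = g\,\omega_0\wedge\omega_1$. I would also record here that the codimension-two distribution $\G := \F_{(1:0)}\cap\F_{(0:1)}$, cut out by $\omega_0\wedge\omega_1$, is integrable (since $d\omega_i\in\langle\omega_0,\omega_1\rangle$), and that the members of the pencil are precisely the codimension-one foliations containing $\G$. In these terms conclusion (1) is exactly the assertion that $\G$ is the foliation by fibres of a dominant rational map to a surface.

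The dichotomy is governed by $g$. If $g\equiv 0$ then $\eta$ is closed, and for each $(s:t)$ the pair $(\omega_{(s:t)},\eta)$ satisfying $d\omega_{(s:t)} = \eta\wedge\omega_{(s:t)}$, $d\eta = 0$ is a transversely affine structure; this gives conclusion (2). The substantive case is $g\not\equiv 0$, where $\eta$ fails to be closed and the individual members need not be transversely affine. Here the key object is the \emph{closed, decomposable} (rank two, in fact exact) rational $2$-form $d\eta = g\,\omega_0\wedge\omega_1$, whose kernel is exactly $\G$; it endows the two-dimensional leaf space of $\G$ with a transverse area structure. The aim is to upgrade this transverse structure to genuine algebraicity, namely to show that the leaves of $\G$ are algebraic and rationally parametrised by a projective surface $Y$, so that $\G$ is the fibration of a rational map $\pi\colon X\dashrightarrow Y$ and $\F_{(s:t)} = \pi^*\G_{(s:t)}$.

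I expect this last step to be the main obstacle. Producing $\pi$ amounts to integrating $\G$ algebraically out of the transverse data $(\omega_0,\omega_1,\eta)$: locally the closed decomposable form writes $d\eta = dP\wedge dQ$, but one must control the monodromy of the multivalued first integrals $(P,Q)$ and show that it is finite or algebraic, so that they assemble into a rational map to a surface. I would approach this by analysing the developing map of the transverse structure together with its monodromy representation, arguing that $g\not\equiv 0$ obstructs the transversely affine alternative and thereby rigidifies $\G$; concretely, one checks that each member $\F_{(s:t)}$ carries a transverse projective structure extending $(\omega_{(s:t)},\eta)$ and then pins down, via the pole and residue data of the closed form $d\eta$, that the associated representation is algebraic. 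Verifying these pole/residue hypotheses and excluding the intermediate configurations is the delicate point on which the whole case $g\not\equiv 0$ turns.
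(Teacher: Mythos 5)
Your reduction to a single connection form $\eta$ with $d\omega_i=\eta\wedge\omega_i$ and curvature $d\eta=g\,\omega_0\wedge\omega_1$ matches the paper's setup exactly (there $\eta$ is the connection form $\theta$ of the $3$-web $\F_{(1:0)}\boxtimes\F_{(0:1)}\boxtimes\F_{(1:1)}$ and $d\theta$ its curvature), and your treatment of the case $g\equiv 0$ is correct. The genuine gap is the case $g\not\equiv 0$. You correctly identify that the target there is conclusion (1), but the route you sketch --- integrating the transverse area structure, controlling the monodromy of the multivalued primitives $(P,Q)$ of $d\eta$, and extracting algebraicity from pole and residue data --- is never carried out, and you yourself flag it as the main obstacle. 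No such transcendental argument is needed, and it is far from clear it could be made to work as stated: nothing in your sketch explains why the monodromy should be finite or algebraic.

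The missing idea is Cerveau's purely algebraic computation (Proposition~\ref{P:Cerveau}, after \cite[Proposition 2]{cerveau2002pinceaux}), which stays entirely inside the field of rational functions. Differentiating $d\eta=g\,\omega_0\wedge\omega_1$ gives $(dg+2g\eta)\wedge\omega_0\wedge\omega_1=0$, hence $\tfrac12\,\frac{dg}{g}+\eta=K_0\omega_0+K_1\omega_1$ for rational $K_0,K_1$; differentiating once more and wedging with the $\omega_i$ shows that $K_0^2/g$, $K_1^2/g$ and $K_0/K_1$ are rational first integrals of $\G$, and a short check shows they cannot all be constant when $d\eta\neq 0$. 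This yields $\C(X/\G)\neq\C$ with no integration whatsoever. One further step, also absent from your sketch, is then required: a single nonconstant first integral does not yet give a surface, and the paper closes the loop by showing that $\trdeg\C(X/\G)=1$ forces the curvature to vanish (one may take the closed form $df$ as a member of the relevant eigenspace, whence $\eta$ is proportional to $df$ modulo the pencil and $d\eta=dg'\wedge df$ would otherwise produce a second, algebraically independent first integral). Hence $g\not\equiv 0$ forces $\trdeg\C(X/\G)=2$, and the map $\pi\colon X\dashrightarrow Y$ is simply the one attached to the field of rational first integrals by Proposition~\ref{P:corpo}; every member of the pencil contains $\G$ and is pulled back from $Y$ by Proposition~\ref{P:algebraically integrable}. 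You should replace your monodromy discussion by this argument.
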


As implicit in the statement, the two possibilities are not exclusive. Moreover, we point out the existence of literature aiming to determine when a pencil of foliations on a projective surface has all its elements singularly transversely affine, see for instance \cite{MR2136009} and \cite{MR4245610}.

Our proof of Theorem \ref{THM:pencil} closely follows Cerveau's original argument, with  minor modifications that allowed us to bypass his use of L\"uroth's theorem.

Given a finite set $S$ of pairwise distinct codimension one singular holomorphic foliations on a projective manifold $X$, their intersection define a singular holomorphic foliation with codimension at most the minimum of the cardinality of $S$ and $\dim X$. It is natural to ask  when this naive upper bound is not achieved. Theorem \ref{THM:pencil} provides an answer when the set $S$ has cardinality $3$. Indeed, linear algebra over the field of rational functions of $X$ tell us that the existence of three pairwise distinct foliations $\F_1, \F_2, \F_3$ such that the intersection of any two of them is the same codimension two foliation implies the existence of a pencil of integrable $1$-forms satisfying the assumptions of Theorem \ref{THM:pencil} and containing $\F_1, \F_2, \F_3$. Our main result reveals that such unexpected behaviour imposes strong restrictions on the foliations in the set $S$ also when $S$ has larger cardinality.

\begin{THM}\label{THM:main}
    Let $S = \{\mathcal F_1, \ldots, \mathcal F_{q+1}\}$ be a set of $q+1$ distinct singular codimension one foliations on a projective manifold
    $X$ of dimension $n > q$. Assume that $q \ge 3$ and that the intersection  of any $q$ distinct foliations in the set $S$ is the same codimension $q$ foliation $\mathcal G$. Let $\delta = \trdeg \C(X/\G)$ be the transcendence degree of the field of rational first integrals of $\G$.
    \begin{enumerate}
        \item If $\delta =q$ then there exists a dominant rational map $\pi : X \dashrightarrow Y$ to a $q$-dimensional projective manifold and $(q+1)$-foliations $\mathcal H_1, \ldots, \mathcal H_{q+1}$ foliations on $Y$  such that  $\mathcal F_i = \pi^*\mathcal H_i$ for every $i \in \{ 1, \ldots, q+1\}$.
        \item\label{I:between 2 and q-1} If $2\le \delta \le q-1$ then at least $q+1-\delta$ foliations in $S$ are (singularly) transversely projective.
        \item If $\delta \in \{ 0,1\}$ then every foliation in $S$ is (singularly) transversely projective.
    \end{enumerate}
\end{THM}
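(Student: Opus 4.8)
The plan is to reduce the hypotheses to a normal form, extract from the integrability conditions a transverse projective structure for $\mathcal{G}$, and then read off the three cases according to the number of available first integrals.

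\textbf{Normal form.} Let $\omega_i$ be a rational $1$-form defining $\mathcal{F}_i$. The conormal sheaf $N^*\mathcal{G}$ has rank $q$ and every $\omega_i$ is a section of it, while any $q$ of them are linearly independent over $\mathbb{C}(X)$; hence there is a relation $\sum_i a_i\omega_i=0$ with all $a_i\in\mathbb{C}(X)^*$, unique up to scale. Rescaling the $\omega_i$ (which does not change the $\mathcal{F}_i$) I may assume $\sum_{i=1}^{q+1}\omega_i=0$, so that $\omega_1,\dots,\omega_q$ is a basis of $N^*\mathcal{G}$ and $\{\omega_1,\dots,\omega_{q+1}\}$ is a projective frame of $\mathbb{P}(N^*\mathcal{G})\cong\mathbb{P}^{q-1}$. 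As an intersection of involutive distributions, $\mathcal{G}$ is involutive, so Frobenius yields rational $1$-forms $\theta_{ij}$ with $d\omega_i=\sum_{j=1}^q\theta_{ij}\wedge\omega_j$.

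\textbf{Transverse structure (the crux).} I would then impose the integrability of each individual foliation, namely $d\omega_i\wedge\omega_i=0$ for $i\le q$ together with the analogous identity for $\omega_{q+1}=-\sum_k\omega_k$, in order to normalize the connection matrix $(\theta_{ij})$. The aim is to show that the moving frame $\{\omega_i\}$ equips $\mathcal{G}$ with a flat transverse structure whose holonomy preserves the frame, and concretely that for every index $i$ with $\omega_i$ not proportional to the differential of a first integral there are rational $1$-forms $\alpha_i,\beta_i$ forming a projective triple $d\omega_i=\alpha_i\wedge\omega_i$, $d\alpha_i=\omega_i\wedge\beta_i$, $d\beta_i=\beta_i\wedge\alpha_i$. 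I expect $\alpha_i$ and $\beta_i$ to be manufactured as cross-ratio–type combinations of $\omega_i$ with three of the remaining forms, in the same spirit in which, for a pencil ($q=2$, three forms), the conditions of Theorem~\ref{THM:pencil} produce a transversely affine structure; the presence of a fourth form is exactly what upgrades the invariant from affine to projective, which is why $q\ge 3$ is assumed.

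\textbf{The trichotomy.} When $\delta=q$, $\mathcal{G}$ has $\codim\mathcal{G}$ independent rational first integrals, so after Stein factorization its general leaf is the general fibre of a dominant rational map $\pi:X\dashrightarrow Y$ with $\dim Y=q$; since $T\mathcal{G}\subset T\mathcal{F}_i$, the leaf of $\mathcal{G}$ through any point of a leaf $L$ of $\mathcal{F}_i$ lies in $L$, so $L$ is a union of fibres and $\mathcal{F}_i=\pi^*\mathcal{H}_i$ descends, giving (1). When $\delta\le q-1$, write $W=\langle df_1,\dots,df_\delta\rangle\subset N^*\mathcal{G}$ for the span of the differentials of a maximal set of independent first integrals. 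By general position of the frame at most $\delta$ indices satisfy $\omega_i\in W$, so at least $q+1-\delta$ satisfy $\omega_i\notin W$; for the latter the triple of the previous step is available and exhibits $\mathcal{F}_i$ as transversely projective, yielding (2). A form with $\omega_i\in W$ defines an $\mathcal{F}_i$ pulled back from $\pi_\delta:X\dashrightarrow Z$, $\dim Z=\delta$, which for $\delta\ge 2$ need not be transversely projective—this accounts for the possible deficit $\delta$ in (2). If $\delta\in\{0,1\}$, however, any such form is $\omega_i=g\,df_1$, whence $d\omega_i=\tfrac{dg}{g}\wedge\omega_i$ with $\tfrac{dg}{g}$ closed, so $\mathcal{F}_i$ is transversely affine and a fortiori transversely projective; combined with the construction for the forms outside $W$, all $q+1$ foliations are transversely projective, which is (3).

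The step I expect to be hardest is the middle one: converting the scalar integrability identities into a bona fide flat $\mathfrak{sl}_2$-connection—a \emph{global rational} projective triple—for each relevant $\mathcal{F}_i$, and proving that the degeneration of this structure (projective versus transversely affine versus genuine pull-back) is obstructed precisely by membership of $\omega_i$ in $W$, so that the count $q+1-\delta$ is correct.
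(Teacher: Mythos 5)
Your outer scaffolding (the normalization $\sum_i \omega_i = 0$, the count of how many $\omega_i$ can lie in the span of differentials of first integrals, and the descent argument when $\delta = q$) matches the paper's strategy, but the step you yourself flag as the hardest --- producing a global rational projective triple $(\omega_i,\alpha_i,\beta_i)$ for each $\mathcal F_i$ not pulled back from the algebraic reduction --- is precisely the content of the theorem, and you do not prove it. The proposed mechanism (``cross-ratio--type combinations'' of four members of the frame, with the fourth form ``upgrading'' affine to projective) is a heuristic with no supporting computation, and it is not how the structure actually arises. In the paper the first real input is that all $q+1$ forms lie in a \emph{single} eigenspace $\E^1_{\G}(\eta)$ of Bott's partial connection on $N^*_{\G}$ (Corollary~\ref{C:q+1}); everything then splits according to whether $\eta$ is log-exact. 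If it is not, Cerveau's curvature computation (Proposition~\ref{P:Cerveau} and Corollary~\ref{C:Cerveau}) forces $\C(X/\G)=\C$ and makes every $\mathcal F_i$ transversely \emph{affine} --- no fourth form or projective upgrade is involved. If $\eta$ is log-exact, one normalizes $\eta=0$, deduces that $\G$ is transversely parallelizable (Corollary~\ref{C:quasitransvLie}), and the projective structure comes from Lie theory, not from cross-ratios: the flat rational sections of $N_{\G}$ form a $q$-dimensional Lie algebra $\mathfrak g$, codimension one foliations containing $\G$ correspond to codimension one subalgebras $\mathfrak h\subset\mathfrak g$ (Proposition~\ref{P:subalgebra}), and the Lie--Tits Lemma~\ref{L:LieTits} identifies the quotient $\mathfrak g/\mathfrak I$ with a subalgebra of $\mathfrak{sl}_2$, which is what yields the transverse projective structure.

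There is a second, independent gap hiding in the same place. When $1\le\delta\le q-1$ the Lie-theoretic argument above only applies to the restriction of $\mathcal F_i$ to a general fiber of the reduction map $\pi:X\dashrightarrow Y$ (where the restricted $\G$ has Zariski dense leaves); knowing that $\restr{\mathcal F_i}{F}$ is transversely projective on a general fiber $F$ does \emph{not} formally give a transversely projective structure on $X$. The paper closes this gap by invoking the extension theorem of Loray--Pereira--Touzet \cite[Theorem 3.1]{MR4288634} (see Theorem~\ref{T:transversely projective}), which is a genuinely nontrivial external input with no counterpart in your sketch. Without both ingredients --- the Bott-connection/Lie-algebra mechanism that actually produces the $\mathfrak{sl}_2$ data, and the fiber-to-total-space propagation --- the central claim of items (2) and (3) remains unproven.
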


\begin{remark}
    Item (\ref{I:between 2 and q-1}) can be stated more precisely as follows. If $\delta=\trdeg \C(X/\G)$ is between $2$ and $q-1$ then there exists an integer $e$ between $0$ and $\trdeg \C(X/\G)$,  a dominant rational map $\pi : X \dashrightarrow Y$ to a $\delta$-dimensional projective manifold and $e$ foliations $\mathcal H_1, \ldots, \mathcal H_{e}$ on $Y$ such that, up to reordering, $\mathcal F_i = \pi^*\mathcal H_i$ for every positive integer $i\le e$ and every other foliation in $S$ is (singularly) transversely projective.
\end{remark}

We restricted ourselves to the case $q\ge 3$ in the statement of Theorem \ref{THM:main} since, as explained above, the case $q=2$ is already treated by Theorem \ref{THM:pencil}, with a stronger conclusion.

The proof of Theorem \ref{THM:main} goes through the study of the codimension $q$ foliation defined by the set $S$. It turns out that the abundance of foliations containing $\G$ implies strong restrictions on the transverse dynamics of $\G$. The next result appears as a key intermediate step toward the proof of Theorem \ref{THM:main}.

\begin{THM}\label{THM:Lie}
    Let $\G$ be a codimension $q$, $q \ge 2$, on a projective manifold $X$. Assume that $\trdeg(\C(X/\G)) = 0$, and that $\G$ is contained in $q+1$ codimension one foliations such that any $q$ of them have intersection equal to $\G$. Then we have the following alternative:
    \begin{enumerate}
        \item The set of foliations of codimension $r<q$ containing $\G$ is naturally identified with the Grassmanian of $r$-planes in $\C^q$ and every foliation containing $\G$ is an intersection of transversely affine codimension one foliations.
        \item The foliation $\G$ is a singularly transversely Lie foliation modeled over a non-abelian $q$-dimensional complex Lie algebra $\mathfrak{g}$ and the set of foliations of codimension $r<q$ containing $\G$ is naturally identified with the variety of Lie subalgebras of $\mathfrak{g}$ of codimension $r$. Furthermore, every foliation containing $\G$ is singularly transversely homogeneous.
    \end{enumerate}
\end{THM}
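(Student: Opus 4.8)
The plan is to prove that the hypotheses force $\G$ to be singularly transversely Lie, modeled on a $q$-dimensional complex Lie algebra $\mathfrak g$, and then to split into the two cases according to whether $\mathfrak g$ is abelian or not. First I would normalize the data. Writing $\eta_1,\dots,\eta_{q+1}$ for the $1$-forms defining the $q+1$ foliations, the hypothesis that any $q$ of them cut out $\G$ means that any $q$ are $\C(X)$-linearly independent and span the rational conormal space $W$ of $\G$; hence there is an essentially unique relation $\sum_i a_i\eta_i=0$ with every $a_i\neq 0$. After rescaling I may take $\omega_i:=a_i\eta_i$, $i\le q$, as a basis of $W$ with $\sigma:=\omega_1+\dots+\omega_q$ defining $\F_{q+1}$. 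Integrability of the individual codimension one foliations gives $d\omega_i=\beta_i\wedge\omega_i$, and expanding $\sigma\wedge d\sigma=0$ produces the relation $\sum_{i<j}(\beta_i-\beta_j)\wedge\omega_i\wedge\omega_j=0$. The goal is to manufacture a meromorphic coframe of $W$ (which I again denote $\omega_1,\dots,\omega_q$) whose exterior derivatives lie in $\bigwedge^2 W$, say $d\omega_i=\sum_{j<k}c^i_{jk}\,\omega_j\wedge\omega_k$; this is exactly a singular transverse parallelism.

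The decisive use of the hypothesis $\trdeg \C(X/\G)=0$ is a rigidity statement that I would isolate as a lemma: for any coframe with $d\omega_i\in\bigwedge^2 W$, differentiating and comparing the components of the identity $0=d^2\omega_i$ that are linear in a complement of $W$ forces $dc^i_{jk}\wedge\omega_1\wedge\dots\wedge\omega_q=0$, so each $c^i_{jk}$ is a rational first integral of $\G$ and therefore, since $\trdeg \C(X/\G)=0$, a constant. The Jacobi identity for the bracket with structure constants $c^i_{jk}$ is then the formal consequence of $d^2=0$, and one obtains the Lie algebra $\mathfrak g$ together with the transverse Lie structure.

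Everything thus reduces to producing a coframe with $d\omega_i\in\bigwedge^2 W$, i.e.\ to annihilating the part of the connection transverse to $W$; this is the principal obstacle. A direct computation shows that the relations furnished by the $q+1$ given foliations are by themselves insufficient: for $q\ge 3$ they pin down only the single combination $\sum_i\beta_i^{\perp}\wedge\omega_i$ of the transverse parts $\beta_i^{\perp}$, not the individual terms. To remove them I would work on a $q$-dimensional transversal, on which $\G$ induces its holonomy pseudogroup and the $q+1$ foliations descend to a codimension one $(q+1)$-web in general position preserved by that pseudogroup. The condition $\trdeg \C(X/\G)=0$ translates into the absence of nonconstant functions invariant under the holonomy, forcing its Zariski closure to act with dense orbits; I expect this to force the web to be a group (equivalently parallelizable) web, so that the transverse obstruction is eliminated after a meromorphic change of coframe. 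This web-theoretic rigidity, combining general position of the $q+1$ foliations with the lack of first integrals, is where the real content of the theorem lies and where I anticipate the most delicate work.

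With the transverse Lie structure established, the dichotomy follows. If $\mathfrak g$ is abelian then the $\omega_i$ are closed, so each defines a transversely affine foliation and $\G$ is an intersection of such; moreover any codimension one foliation containing $\G$ is invariant under the Zariski dense holonomy acting by translations on the transversal, hence its conormal is a constant hyperplane, and codimension $r$ foliations containing $\G$ correspond to constant $r$-planes of $W\cong\C^q$, yielding the Grassmannian description of case (1). If $\mathfrak g$ is non-abelian, Fedida-type developing-map theory presents $\G$ as singularly transversely Lie modeled on $\mathfrak g$; the Chevalley--Eilenberg duality between $\mathfrak g$-invariant ideals and subalgebras identifies codimension $r$ subfoliations with the codimension $r$ Lie subalgebras of $\mathfrak g$, each being transversely homogeneous, which is case (2).
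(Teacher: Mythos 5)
Your overall architecture --- reduce to a rational transverse parallelism, show the structure functions are rational first integrals hence constants, then split on whether the resulting Lie algebra is abelian --- only covers one branch of the argument, and the step you yourself flag as ``the principal obstacle'' is in fact not achievable in general. What the hypothesis actually yields (Corollary \ref{C:q+1}, extracted from the single linear relation among the $q+1$ defining forms) is a common eigenvalue $\eta\in H^0(X,\Omega^1_{\G}\otimes\Mer_X)$ for Bott's partial connection: a basis $\omega_1,\dots,\omega_q$ of the rational conormal with $\nabla(\omega_i)=\eta\otimes\omega_i$. A meromorphic coframe whose differentials lie in $\bigwedge^2$ of the conormal exists if and only if $\eta$ is log-exact, since rescaling $\omega_i\mapsto f_i\omega_i$ only shifts $\eta$ by $\restr{df_i/f_i}{T_{\G}}$. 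The example at the end of Section \ref{Section:codim2} (a linear foliation on a simple abelian variety pushed down to the quotient by $-1$) satisfies all the hypotheses of the theorem with $\eta$ not log-exact: the parallelizing coframe upstairs is anti-invariant under the involution and does not descend, so no rational parallelism exists on $X$, and your expectation that the web on a transversal becomes a group web ``after a meromorphic change of coframe'' fails --- the web is only locally analytically parallelizable, which is useless for your argument because the constancy of the structure functions requires them to be \emph{rational} first integrals on $X$. Consequently your dichotomy (abelian vs.\ non-abelian $\mathfrak g$) does not match the theorem's: alternative (1) must also absorb this non-parallelizable case.

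The paper handles the non-log-exact branch by a different mechanism. Proposition \ref{P:integravel} shows, via $d^2\omega=0$, that if $\eta$ is not log-exact then \emph{every} element of $\E_{\G}^1(\eta)$ is automatically integrable, so the codimension one foliations containing $\G$ form a $\mathbb P^{q-1}(\C)$; Cerveau's curvature computation for the $3$-webs cut out by pairs of these forms (Proposition \ref{P:Cerveau}, Theorem \ref{T:Cerveau}, Corollary \ref{C:Cerveau}) shows that non-zero web curvature would produce a non-constant rational first integral of $\G$, contradicting $\trdeg \C(X/\G)=0$; hence all these foliations are transversely affine and the Grassmannian description of alternative (1) follows directly, with no Lie algebra structure ever produced. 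Your log-exact branch is essentially the paper's (Lemma \ref{L:transitivetransverse}, Corollary \ref{C:quasitransvLie}, Proposition \ref{P:subalgebra} for the correspondence with subalgebras), but as written the proposal has a genuine gap exactly where you located the ``most delicate work'', and the missing case cannot be closed by the route you propose.
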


We stress that the assumption of Theorem \ref{THM:Lie} are  strong. Indeed,  \cite[Theorem 1]{MR2862041} shows that a general one-dimensional foliation is not contained in any higher dimensional foliation. 

As a consequence of Theorem \ref{THM:Lie}, we obtain the following  result.

\begin{THM}\label{THM:infinitas} 
    Let $\G$ be a codimension $q$ foliation.If $\G$ is contained in at least $q+1$ codimension one foliations, then $\G$ is contained in infinitely many of them.
\end{THM}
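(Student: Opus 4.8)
The plan is to derive Theorem~\ref{THM:infinitas} as a direct consequence of Theorem~\ref{THM:Lie}, so the main work is to reduce the general situation to the hypotheses of that theorem and then read off the conclusion from each of its two alternatives. Let me sketch the reduction.

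Suppose $\G$ is a codimension $q$ foliation contained in $q+1$ codimension one foliations. First I would like to extract from these $q+1$ foliations a subcollection whose pairwise-$q$ intersections all equal a single codimension $q$ foliation, so that Theorem~\ref{THM:Lie} applies. At each point the conormal sheaf of $\G$ is generated by the $1$-forms defining the containing foliations, and over the field $K=\C(X)$ of rational functions the conormal data becomes a $q$-dimensional $K$-vector space $V$ inside the $K$-vector space of rational $1$-forms. Each codimension one foliation containing $\G$ corresponds to a line in $V$ (its defining $1$-form, up to multiplication by a rational function), and $\G$ itself corresponds to all of $V$. Having $q+1$ codimension one foliations containing $\G$ means we have $q+1$ lines in the $q$-dimensional space $V$; the fact that $\G$ has codimension exactly $q$ forces these $q+1$ lines to span $V$, so we may choose $q$ of them forming a basis of $V$, and the remaining line is in general position. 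This is precisely the configuration feeding Theorem~\ref{THM:Lie}: any $q$ of these $q+1$ foliations intersect in the codimension $q$ foliation $\G$.

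The one remaining hypothesis of Theorem~\ref{THM:Lie} is that $\trdeg(\C(X/\G))=0$. The plan is to handle this by descending along the fibration of rational first integrals. If $\delta = \trdeg \C(X/\G) > 0$, I would pass to the quotient: there is a dominant rational map onto a $\delta$-dimensional base whose general fiber is invariant by $\G$, and $\G$ is the pullback of a codimension $q-\delta$ foliation $\bar\G$ with no nonconstant rational first integrals, i.e.\ $\trdeg(\C(\cdot/\bar\G))=0$. The $q+1$ codimension one foliations containing $\G$ push down to $q+1$ codimension one foliations containing $\bar\G$ (pullback being injective on these configurations), so after this reduction $\bar\G$ satisfies the hypotheses of Theorem~\ref{THM:Lie} with its own codimension $\bar q = q-\delta \ge $ the relevant bound; one then pulls back the infinitely many foliations produced downstairs to get infinitely many upstairs. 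I would need to check $\bar q \ge 2$ so that Theorem~\ref{THM:Lie} is available, treating the small cases ($\bar q \le 1$) separately, where the abundance of containing foliations is immediate from the identification of codimension one foliations with lines in $V$.

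With the reduction in place, the conclusion is immediate: in both alternatives of Theorem~\ref{THM:Lie}, the set of codimension one foliations containing $\G$ is identified either with the set of lines in $\C^q$ (the Grassmannian $\mathrm{Gr}(1,q)=\mathbb P^{q-1}$) or with the variety of codimension one Lie subalgebras of $\mathfrak g$, and since $q \ge 2$ both of these are positive-dimensional algebraic varieties, hence infinite. Thus $\G$ is contained in infinitely many codimension one foliations. The main obstacle I anticipate is the descent step: making precise that quotienting by the first-integral fibration preserves both the codimension-one-foliation configuration and the "any $q$ intersect in $\G$" property, and correctly tracking how the codimension drops so that the dimension count feeding the Grassmannian/Lie-subalgebra variety stays positive. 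The linear-algebra reduction over $K$ and the final "positive-dimensional hence infinite" step should be routine once that descent is set up cleanly.
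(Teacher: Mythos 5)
Your overall strategy (reduce to Theorem~\ref{THM:Lie} and read off infinitude from the parameter space of codimension one subobjects) is the right instinct for the case $\trdeg\C(X/\G)=0$, but the two reduction steps you rely on both have genuine gaps. First, the linear-algebra step: $q+1$ lines spanning a $q$-dimensional $\C(X)$-vector space $V$ do \emph{not} imply that every $q$ of them form a basis. In $\C(X)^3$ the lines spanned by $e_1,e_2,e_3,e_1+e_2$ span $V$, yet $\{e_1,e_2,e_1+e_2\}$ spans only a plane; translated back, the intersection of the corresponding three foliations has codimension $2$, not $3$, so the hypothesis of Theorem~\ref{THM:Lie} (``any $q$ of them have intersection equal to $\G$'') simply fails. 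The paper handles this with Lemma~\ref{lema:reducao}: one extracts a \emph{minimal} degenerate subconfiguration $S'$, whose common intersection is a foliation $\G'$ of codimension $\#S'-1$ containing $\G$ (possibly of strictly smaller codimension than $q$), and notes that any codimension one foliation containing $\G'$ also contains $\G$. Your argument needs this replacement of $\G$ by $\G'$; without it the application of Theorem~\ref{THM:Lie} is unjustified.

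Second, the descent when $\delta=\trdeg\C(X/\G)>0$ does not work as you describe. A codimension $q-\delta$ foliation on the $\delta$-dimensional base $Y$ would pull back to a codimension $q-\delta$ foliation on $X$, not to $\G$; the dimensions do not match unless $\delta=q$. The correct operation is \emph{restriction to a general fiber} $F$ of $\pi$, and there the real difficulty appears: infinitely many codimension one foliations of $F$ containing $\restr{\G}{F}$ do not extend, in any automatic way, to codimension one foliations of $X$ containing $\G$. This extension problem is precisely why the paper's $\delta=1$ case (Proposition~\ref{P:tr=1}) is delicate: it compares transversely projective structures on $\F$ and on $\restr{\F}{F}$, invokes \cite[Lemma 2.2]{MR4288634} and the extension theorem \cite[Theorem 3.1]{MR4288634}, and then produces explicit global families such as $\omega_0+t\omega_1$, $\omega_0+t\,df$, or $\Omega_t=\omega_0+t\omega_1+t^2\omega_2$. (For $\delta\ge 2$ the theorem is actually immediate and needs no descent at all: the pencil $f_1+tf_2$ of first integrals already yields infinitely many codimension one foliations containing $\G$ --- a shortcut your plan misses.) Finally, a smaller point: in alternative (2) of Theorem~\ref{THM:Lie} the claim that the variety of codimension one subalgebras of $\mathfrak g$ is positive-dimensional is not self-evident and the paper proves infinitude via Lemma~\ref{L:LieTits}, exhibiting in each case a pencil or a conic of subalgebras; you should supply that argument rather than assert positive-dimensionality.
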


\subsection*{Structure of the paper}
In Section \ref{Section:basic}, we review the fundamental concepts of foliation and distribution theory on projective manifolds, with a particular focus on their fields of first integrals.

The technical core of this work is developed in Sections~\ref{SS:Bott} and~\ref{Section:invariant}. In Section~\ref{SS:Bott}, we examine Bott’s partial connection on the normal sheaf of a foliation and show that the space of its flat sections naturally inherits a Lie algebra structure. This observation leads us to explore transversely Lie foliations, as well as transversely parallelizable and transversely homogeneous foliations.

Section~\ref{Section:invariant} introduces further notions related to the Bott connection, including invariant distributions and eigenspaces arising from the action of Bott’s connection on the space of rational sections of the conormal sheaf and its exterior powers.

The remainder of the paper is dedicated to proving the results stated in the Introduction. In Section~\ref{Section:codim2}, we analyze intersections that give rise to codimension two foliations and establish Theorem~\ref{THM:pencil}. Section~\ref{Section:trdeg0} addresses the case in which the foliation induced by the intersection admits only constant rational first integrals, culminating in the proof of Theorem~\ref{THM:Lie}. Our main result, Theorem~\ref{THM:main}, is proved in Section~\ref{Section:transv_paral}. Finally, we conclude by proving Theorem~\ref{THM:infinitas} in Section~\ref{S:final}.

\subsection*{Acknowledgments} The authors express their appreciation for the financial support provided by CAPES/COFECUB and CNPq Projeto Universal 408687/2023-1 "Geometria das Equações Diferenciais Algébricas". Pereira  acknowledges the support  from CNPq (Grant number 304690/2023-6), and FAPERJ (Grant number E26/200.550/2023). Barbosa acknowledges the financial support provided by CNPq.

\section{Basic concepts}\label{Section:basic}

\subsection{Foliations}
Let $X$ be a projective manifold. A singular holomorphic foliation $\F$ on $X$ is
a pair $(T_{\F}, N^*_{\F})$ of subsheaves of $(T_X, \Omega^1_X)$ satisfying the properties listed below.
\begin{enumerate}
    \item The subsheaf $T_{\F}$ of $T_X$ is the annihilator of $N^*_{\F}$, that is  $T_{\F}$ is the kernel
    of the natural morphism
    \begin{align*}
        T_X & \longrightarrow \Hom_{\mathcal O_X}(N^*_{\F}, \mathcal O_X) \\
        v &\longmapsto  \{ \omega \mapsto \omega(v)  \} \, .
    \end{align*}
    \item Likewise, the subsheaf $N^*_{\F}$ of $\Omega^1_X$ is the annihilator of $T_{\F}$.
    \item\label{I:involutive} The sheaf $T_{\F}$ is involutive (closed under Lie brackets).
    \item\label{I:integrable} If the generic rank of $N^*_{\F}$ is $q$ then
    \[
        d \omega \wedge \omega_1 \wedge \ldots \wedge \omega_q = 0
    \]
    for any open set $U$ and any $(q+1)$ sections $\omega, \omega_1, \ldots, \omega_q$ of $\restr{N^*_{\F}}{U}$.
\end{enumerate}

Conditions (\ref{I:involutive}) and (\ref{I:integrable}) are equivalent to each other. When we drop both of them, we obtain the concept of a distribution on $X$.

The sheaf $T_{\F}$ is the tangent sheaf of $\F$ and the sheaf $N^*_{\F}$ is the conormal sheaf of $\F$.  The dimension $\dim \F$ of $\F$ is the
generic rank of $T_{\F}$ and the codimension $\codim \F$ of $\F$ is the generic rank of $N^*_{\F}$. Of course, $\dim X = \dim \F + \codim \F$.

Since $T_{\F}$ is a kernel of a morphism of reflexive $\mathcal O_X$-modules, it follows that $T_{\F}$ is not only a reflexive sheaf but also saturated inside $T_X$, that is $T_X/T_{\F}$ is torsion free. Likewise, $N^*_{\F}$ is reflexive and saturated in $\Omega^1_X$.

The singular set of $\F$ is the set of points  where $T_X / T_{\F}$ is not locally free which coincides with the set points where $\Omega^1_X/N^*_{\F}$ is not locally free. Succinctly, $\sing(\F) = \sing(T_X/T_{\F}) = \sing(\Omega^1_X/N^*_{\F})$. 

If $\F$ is a codimension $q$ foliation then Frobenius theorem implies that for every $x \in X - \sing(\F)$ there exists a germ of holomorphic submersion $f: (X,x) \to \mathbb (\C^q,0)$ such that $\restr{T_{\F}}{(X,p)}$ coincides with the relative tangent sheaf of $f$. The analytic continuation of the level sets of $f$ induce a decomposition of $X -\sing(\F)$ into pairwise disjoint codimension $q$ immersed connected analytic subvarieties, commonly referred to as the leaves of $\F$. 

We will denote the dual of $T_{\F}$ by $\Omega^1_{\F}$ and the dual of $N^*_{\F}$ by $N_{\F}$. Beware that some authors use different notations. For instance, some use $N_{\F}$ to stand for $T_X/T_{\F}$. On $X-\sing(\F)$,  $T_X/T_{\F}$ equals $N_{\F}$ but they differ on neighborhoods of points in the singular set.

\subsection{Differential forms defining foliations}
Given a codimension $q$ foliation $\F$, the wedge product of differential forms induces   an inclusion
\begin{equation}\label{E:inclusion}
    0 \to \det N^*_{\F} \to \Omega^q_X
\end{equation}
which yields a twisted $q$-form $\omega \in H^0(X, \Omega^q_X \otimes \det N_{\F})$ with coefficients in the line bundle $\det N_{\F} = (\det N^*_{\F})^*$.

The tangent sheaf of $\F$ can be recovered as the kernel of the natural morphism
\[
    i_{\omega} : T_X \to \Omega^{q-1}_X \otimes \det N_{\F} \, ,
\]
defined by the contraction of vector fields with $\omega$.

In general, given $\omega \in H^0(X, \Omega^q_X \otimes N)$, the kernel of the natural morphism $i_\omega$ defined by contraction with $\omega$ may have rank smaller than $\dim X - q$. For instance, if $\omega$ is a holomorphic symplectic $2$-form then the kernel of $i_{\omega}$ is the zero sheaf. The kernel of $i_{\omega}$ has rank $\dim X - q$ if, and only if, the germ of $\omega$ at a general point of $x \in X$ can be written as the wedge product of $q$ germs of $1$-forms tensor a nowhere vanishing section of $N$, see \cite{MR1842027}.

If $\sigma$ is a non-zero rational section of the line bundle $\det N^*_{\F}$  then
$\det N^*_{\F} = \mathcal O_X((\sigma)_0 - (\sigma)_\infty)$ and from the inclusion (\ref{E:inclusion})
one obtains a rational $q$-form, still denoted by $\omega$, with polar divisor equal to $(\sigma)_{\infty}$ and zero divisor equal to $(\sigma)_0$. Note that we can recover $T_{\F}$ from the rational $q$-form exactly as in the case of twisted $q$-forms. Moreover, if $\omega$
and $\omega'$ are two rational $q$-forms with the same kernels, and both have  rank $\dim X - q$, then the two rational $q$-forms differ by multiplication by a non-zero rational function.

\subsection{The field of rational first integrals and the Zariski closure of a foliation}\label{SS:field}
Let $\F$ be a foliation on a projective manifold $X$. A rational first integral for $\F$ is a rational function of $X$ with differential annihilated by any local section of $T_{\F}$. In other words, a rational first integral for $\F$  is a  rational function constant along the leaves of $\F$. It follows from its definition, that the set of rational first integrals of $\F$ is a subfield  $\mathbb C(X/\F)$ of $\mathbb C(X)$, the field of rational functions of $X$.

\begin{prop}\label{P:corpo}
    Let $X$ be a projective manifold of dimension $n$ and $\mathcal F$ be a foliation on $X$ of codimension $q$.
    The field $\C(X/\F)$ is algebraically closed in $\C(X)$ and the field extension $\C(X/\F) / \C$ has transcendence degree bounded by the codimension of $\F$. Consequently, there exists a projective manifold $Y$ and a dominant rational map $\pi : X \dashrightarrow Y$ with irreducible general fiber such that $\C(X/\F) = \pi^* \C(Y)$.
\end{prop}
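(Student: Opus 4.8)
The plan is to prove Proposition \ref{P:corpo} in three stages: first showing $\C(X/\F)$ is algebraically closed in $\C(X)$, then bounding the transcendence degree by $q = \codim\F$, and finally producing the rational map $\pi$ via Stein-type factorization.

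For the algebraic closure, suppose $g \in \C(X)$ is algebraic over $K := \C(X/\F)$, satisfying a monic minimal polynomial $g^m + a_{m-1}g^{m-1} + \cdots + a_0 = 0$ with $a_i \in K$. The key observation is that every local section $v$ of $T_{\F}$ acts as a derivation on $\C(X)$ annihilating each $a_i$. Applying $v$ (viewed as $d(\cdot)(v)$) to the minimal polynomial relation and using $v(a_i)=0$, I get $\bigl(m g^{m-1} + (m-1)a_{m-1}g^{m-2} + \cdots + a_1\bigr)\, v(g) = 0$. Since the bracketed factor is the formal derivative of the minimal polynomial, it is nonzero (the minimal polynomial is separable in characteristic zero), so $v(g)=0$ as an identity of rational functions. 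Hence $dg$ is annihilated by every local section of $T_{\F}$, which means $g$ is itself a rational first integral, i.e. $g \in K$. This shows $K$ is algebraically closed in $\C(X)$.

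For the transcendence degree bound, I would argue that the differentials of first integrals are constrained by the conormal sheaf. At a general smooth point $x \in X - \sing(\F)$, Frobenius gives a submersion $f : (X,x) \to (\C^q,0)$ whose components $f_1, \ldots, f_q$ have differentials spanning $\restr{N^*_{\F}}{x}$. If $h$ is any rational first integral, then $dh$ annihilates $T_{\F}$, so $dh$ is a (local) section of $N^*_{\F}$, and hence at $x$ it is a combination of $df_1, \ldots, df_q$; thus the space $\{ dh : h \in K\}$ evaluated at a general point has dimension at most $q$. Any $q+1$ elements $h_0, \ldots, h_q \in K$ therefore have linearly dependent differentials at a general point, forcing $dh_0 \wedge \cdots \wedge dh_q = 0$ as a rational $(q+1)$-form; a standard Jacobian-criterion argument then shows $h_0, \ldots, h_q$ are algebraically dependent over $\C$. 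Hence $\trdeg_{\C} K \le q$.

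For the final clause, write $\delta = \trdeg_\C K \le q$. Since $K$ is a finitely generated field extension of $\C$ (being a subfield of the finitely generated field $\C(X)$, and subfields of finitely generated fields over a perfect field are finitely generated), there is a quasi-projective variety $Y_0$ with $\C(Y_0) \cong K$, which I may take projective and, after resolution of singularities, smooth; the inclusion $K \hookrightarrow \C(X)$ induces a dominant rational map $\pi : X \dashrightarrow Y$ with $\pi^* \C(Y) = K$. The remaining point is that the general fiber of $\pi$ is irreducible, and this is exactly where algebraic closedness is used: irreducibility of the general fiber is equivalent to $\pi^*\C(Y)$ being algebraically closed in $\C(X)$. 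The main obstacle, and the step deserving the most care, is this last equivalence together with the transcendence-degree argument: making rigorous that the pointwise bound on $\dim\{dh(x)\}$ translates into an algebraic-dependence statement (rather than merely a differential one) requires invoking the characteristic-zero Jacobian criterion for algebraic independence, and confirming that the resulting factorization has geometrically irreducible general fibers demands the algebraic-closure property established in the first step.
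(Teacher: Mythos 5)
Your proof is correct and, for the part the paper actually proves (the algebraic closedness of $\C(X/\F)$ in $\C(X)$), it follows essentially the same route of differentiating the algebraic relation along $T_{\F}$; your use of the minimal polynomial together with separability in characteristic zero replaces the paper's induction on the degree, which is only a minor streamlining. The remaining assertions --- the bound $\trdeg \C(X/\F) \le q$ via the rank of $N^*_{\F}$ and the Jacobian criterion, and the construction of $Y$ with irreducible general fiber from the algebraic closedness --- are stated but left unproved in the paper (which calls the result folkloric), and the arguments you supply for them are the standard correct ones.
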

\begin{proof}
     This result is folkloric. Due to the lack of an appropriate reference, we sketch its proof. To verify that $\C(X/\F)$ is algebraically closed in $\C(X)$ we must check that for any non-zero polynomial $p \in \C(X/\F)[t]$ and any solution $f \in \C(X)$ such that $p(f)=0$, we must have that that $f \in \C(X/\F)$. If $p$ has degree one, i.e. $p(t) = a_1 t + a_0$ for some $a_0 , a_1 \in \C(X/\F)$ then $p(f)=0$ implies $f = -a_1/a_0 \in \C( X/\F)$ as wanted. If the degree of $p$ is $e >1$,  we differentiate $0= p(f) = \sum_{i=0}^e a_i f^i$, $a_i \in \C(X/\F)$), along the leaves of $\F$ to obtain
    \[
        \sum_{i=0}^e ( \restr{d a_i}{T_{\F}}) f^i + \sum_{i=1}^e ia_i f^{i-1} \restr{d f}{T_{\F}} = 0 \, .
    \]
    Since $\restr{da_i}{T_{\F}} = 0$, we obtain that either $\restr{d f}{T_{\F}} = 0$ (as wanted) or $\sum_{i=1}^e ia_i f^{i-1}=0$. Hence $f$ satisfies a polynomial equation with coefficients in $\C(X/\F)$ of degree $e-1$. We conclude by induction that $\C(X/\F)$ is algebraically closed in $\C(X)$.
\end{proof}

Let $\overline{\F}$ be the foliation defined by the intersections of level sets of all rational $\mathbb C(X/\F)$. More precisely, $\overline{\F}$ is the foliation with tangent sheaf $T_{\overline{\F}}$ equal  to
\[
    \{ v \in T_X \, | \, v(f) =0 \text{ for every } f \in \C(X/\F) \} \, .
\]
We will say that the foliation  $\overline{\F}$ is the Zariski closure of $\F$.

A result by P. Bonnet \cite{MR2223484}     says that the general leaf $L$ of $\overline{\F}$ is obtained by taking the Zariski closure of a general leaf of $\F$ contained in $L$.

\section{Bott's partial connection and infinitesimal symmetries}\label{SS:Bott}

\subsection{Partial connections}\label{SS:partial} We start this section by recalling the definition of a partial holomorphic connection. Let $\F$ be a holomorphic foliation on a manifold $X$, and let $\mathcal E$ be a coherent sheaf of $\mathcal O_X$-modules.
We will write $\Omega^1_{\F}(\mathcal E)$ for the sheaf of $\mathcal O_X$-modules $\Hom_{\mathcal O_X}( T_{\F},  \mathcal E )$. A $\F$-partial connection on $X$ is a morphism of abelian sheaves
\[
    \nabla : \mathcal E \to \Omega^1_{\F}(\mathcal E) 
\]
which is $\C$-linear and satisfies Leibniz' rule:
\[
    \nabla ( f \sigma) = f \nabla(\sigma) + \restr{df}{T_{\F}} \otimes \sigma \, ,
\]
where $f \in \mathcal O_X$ and $\sigma \in \mathcal E$. In the expression above, and through out the text, we will abuse the notation and for any $\eta \in \Omega^1_{\F}$ and any $\sigma \in \mathcal E$, we will write $\eta \otimes \sigma$ for the image of  $\eta \otimes \sigma \in \Omega^1_{\F} \otimes \mathcal E$ under the natural morphism 
\begin{align*}
   \Omega^1_{\F} \otimes \mathcal E & \longrightarrow \Omega^1_{\F}(\mathcal E) \\
   \eta \otimes \sigma & \mapsto \{ v \mapsto \eta(v) \sigma \} \, .
\end{align*}
Under suitable assumptions, for instance if $\mathcal E$ or $\Omega^1_{\F}$ is locally free, this natural morphism is an isomorphism, but there are examples where it is not, see \cite[Remark 3.1 and Example 3.4]{fazoli2025jetsflatpartialconnections}.
Nevertheless, this subtlety will play no role in the arguments used in this paper, as we  will be interested on the action of $\nabla$ on global rational sections of $\mathcal E$. That is, we will study the linear map
\[
    H^0(X, \mathcal E \otimes \Mer_X) \to H^0(X, \Omega^1_{\F}(\mathcal E) \otimes \Mer_X) = H^0(X, \Omega^1_{\F} \otimes \mathcal E \otimes \Mer_X) 
\]
induced by $\nabla$ on rational sections. Here, and throughout, $\Mer_X$ stands for the sheaf of rational functions on $X$. Notice that the sheaves $\Omega^1_{\F}(\mathcal E) \otimes \Mer_X$ and $\Omega^1_{\F} \otimes \mathcal E \otimes \Mer_X$ are naturally isomorphic because $\Omega^1_{\F}(\mathcal E)$ and $\Omega^1_{\F} \otimes \mathcal E$ coincide over the smooth locus of $\F$. 

As in the case of usual connections, a $\F$-partial connection on a coherent sheaf $\mathcal E$ naturally induces $\F$-partial connections on $\mathcal E^*, \Sym^k \mathcal E$, $\wedge^k \mathcal E$. For instance, if $\nabla$ is a $\F$-partial connection on $\mathcal E$, then one naturally obtains a $\F$-partial connection $\nabla^*$ on $\mathcal E^*$ by imposing the product rule: 
\[
    d \langle \xi , \sigma \rangle = \langle \nabla^*(\xi) , \sigma \rangle + \langle \xi, \nabla(\sigma) \rangle \, ,
\]
where $\sigma \in \mathcal E$ and $\xi \in \mathcal E^*$.

\subsection{Bott's partial connection}\label{SS:Bott partial normal}
Let $\xi$ be a local section of $T_X/T_\F$. If $\hat \xi$ is a vector field  which projects to $\xi$ under $T_X \to T_X / T_{\F}$ then,
for any local section $v$ of $T_\F$, the projection of  $[v,\hat \xi ]$ to $T_X/T_\F$ does not depend on the
choice of the lift $\hat \xi$ .  This allowed Bott to define the partial connection
\begin{align*}
    \nabla : T_X/T_\F & \longrightarrow \Omega^1_{\F} \left( T_X/T_\F \right) \\
    \xi & \mapsto \left( v \mapsto [v, \hat \xi] \mod T_\F \right)
\end{align*}
nowadays called Bott's partial connection.

Thanks to the reflexiveness of $N_\F$ we can extend this operation to local
sections of $N_\F$ and obtain a partial connection $\nabla : N_{\F} \to \Omega^1_{\F} (N_{\F} )$, which we will also
call Bott's partial connection.

The restriction of $\nabla$ to a leaf $L$ of $\F$ is a flat holomorphic connection on $L$ with monodromy given by the linearization of the holonomy of $\F$ along $L$.

Let $U \subset X$ be any Zariski open subset and let $\xi \in N_{\F}(U)$ be a local section of the normal sheaf of $\F$.
Consider any lift $\hat \xi$ of $\xi$ to $T_X(U) \otimes \Mer_X(U)$.
As we are allowing rational coefficients in the lift $\hat \xi$, its existence is guaranteed . The condition $\nabla(\xi)=0$ is equivalent to the following statement:
for any rational section $v$ of $T_\F$, the bracket $[\hat \xi, v]$ is a rational section of $T_\F$.
Hence, the local flow  $\hat{\xi}$ (wherever defined) sends  leaves  of $\F$
to leaves of $\F$, that is $\hat \xi$ is an infinitesimal symmetry of $\F$.

\begin{prop}\label{P:bracket}
    If $\F$ is a foliation of codimension $q$ on a projective manifold $X$
    then the set of rational sections of $N_\F$ which are flat for Bott's connection is a finite
    dimensional vector space $\kerBott(X/\F)$  over $\C (X/\F)$ whose dimension is bounded by $q$ and carries
    a natural structure of $\C$-Lie algebra compatible with brackets of rational
    vector fields on $T_X$.
\end{prop}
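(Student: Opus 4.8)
The plan is to prove Proposition~\ref{P:bracket} by establishing three separate claims: that $\kerBott(X/\F)$ is a vector space over $\C(X/\F)$, that its dimension over this field is at most $q$, and that the Lie bracket of rational vector fields induces a well-defined Lie algebra structure on it over $\C$.

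First I would show that $\kerBott(X/\F)$ is a $\C(X/\F)$-vector space. The set of flat rational sections is clearly closed under addition and is $\C$-linear by the $\C$-linearity of $\nabla$. To see it is stable under multiplication by a rational first integral $f \in \C(X/\F)$, I would apply the Leibniz rule: if $\nabla(\xi) = 0$, then $\nabla(f\xi) = f\nabla(\xi) + \restr{df}{T_\F} \otimes \xi$, and since $f$ is a first integral we have $\restr{df}{T_\F} = 0$, so $\nabla(f\xi)=0$. Thus $f\xi \in \kerBott(X/\F)$.

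Next I would bound the dimension. Because $N_\F$ is a reflexive sheaf of generic rank $q$, the space $H^0(X, N_\F \otimes \Mer_X)$ of rational sections is a $\C(X)$-vector space of dimension $q$. Since $\C(X/\F) \subseteq \C(X)$, any $\C(X/\F)$-linearly independent set of flat sections is in particular $\C(X/\F)$-linearly independent inside $H^0(X, N_\F \otimes \Mer_X)$. The key point is to upgrade this to $\C(X)$-linear independence: I would take a counterexample of minimal length, a relation $\sum_{i} g_i \xi_i = 0$ with $g_i \in \C(X)$ not all in $\C(X/\F)$ and the $\xi_i$ flat and $\C(X/\F)$-independent, then differentiate along $T_\F$ using flatness to get $\sum_i (\restr{dg_i}{T_\F})\,\xi_i = 0$, producing a shorter relation and forcing each $g_i$ to be a first integral, a contradiction. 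Hence $\dim_{\C(X/\F)} \kerBott(X/\F) \le \dim_{\C(X)} H^0(X, N_\F \otimes \Mer_X) = q$.

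Finally, for the Lie algebra structure I would use the interpretation recorded just before the statement: a flat section $\xi$ corresponds, via any rational lift $\hat\xi \in T_X \otimes \Mer_X$, to an infinitesimal symmetry of $\F$, meaning $[\hat\xi, v] \in T_\F \otimes \Mer_X$ for every rational $v \in T_\F$. Given two flat sections $\xi, \eta$ with lifts $\hat\xi, \hat\eta$, I would define their bracket to be the class of $[\hat\xi, \hat\eta]$ in $N_\F \otimes \Mer_X$ and verify it is flat and independent of the choice of lifts. The main obstacle, and the step deserving the most care, is this well-definedness: changing a lift $\hat\xi$ by a rational section $w$ of $T_\F$ changes $[\hat\xi,\hat\eta]$ by $[w,\hat\eta]$, which lies in $T_\F \otimes \Mer_X$ precisely because $\eta$ is flat (so $\hat\eta$ is an infinitesimal symmetry); thus the class modulo $T_\F$ is unchanged. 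That the bracket of two symmetries is again a symmetry follows from the Jacobi identity, and the Jacobi identity for the induced bracket is inherited from that of vector fields. This yields the claimed $\C$-Lie algebra structure compatible with the bracket on $T_X$.
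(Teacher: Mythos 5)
Your proposal is correct and follows essentially the same route as the paper's proof: the Leibniz rule gives the $\C(X/\F)$-module structure, differentiating a putative $\C(X)$-linear relation among flat sections shows its coefficients lie in $\C(X/\F)$ and hence bounds the dimension by $q$, and the Jacobi identity gives the Lie bracket via lifts. Your only addition is to spell out the well-definedness of the bracket under change of lift, a point the paper asserts without detail.
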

\begin{proof}
    Let $\xi_1, \xi_2$ be two rational sections of  $N_\F$ satisfying $\nabla(\xi_1)=\nabla(\xi_2)=0$ and
    consider lifts $\hat \xi_1, \hat \xi_2$  to rational  sections of $T_X$. If $v$ is a rational section
    of $T_X$ then
    \[
        [ [\hat \xi_1, \hat \xi_2] , v ] = [ \hat \xi_1 , [ \hat \xi_2, v] ] - [\hat \xi_2, [ \hat \xi_1 , v] ]
    \]
    thanks to Jacobi's identity. It follows that the bracket $[\hat \xi_1, \hat \xi_2]$ is an infinitesimal
    symmetry of $\F$. Moreover, the analogue bracket obtained through different choices of lifts will differ
    from this one by a rational section of $T_\F$. This endows the set $\kerBott(X/\F)$ of flat rational sections of Bott's partial connection
     with a structure of $\mathbb C$-Lie algebra.

    Notice that for any $f \in \mathbb C(X/\F)$ and any $\xi$ rational section of $N_\F$ with $\nabla(\xi) = 0$
    we have
    \[
        \nabla( f \xi ) =f \nabla(\xi)  + \restr{df}{T_{\F}}  \otimes \xi  = 0
    \]
    This shows that $\kerBott(X/\F)$  is a vector space over $\C(X/\F)$.  Moreover, assume that we have $k+1$ elements $\xi_1, \ldots, \xi_k,\xi_{k+1}$  of $\kerBott(X/\F)$ such that  the corresponding rational sections of $N_\F$ satisfy $\xi_1 \wedge \ldots \wedge \xi_k \neq 0$ and $\xi_{k+1} = \sum_{i=1}^k a_i \xi_i$ for suitable rational functions $a_1, \ldots, a_k \in \mathbb C(X)$. Therefore
    \[
        0 = \nabla(\xi_{k+1}) =  \sum_{i=1}^k \restr{da_i}{T_{\F}}  \otimes \xi_i \, .
    \]
    It follows that $a_1, \ldots, a_k \in \mathbb C(X/\F)$. Since $N_\F$ has rank $q$, it follows that $\kerBott(X/\F)$ is a $\mathbb C(X/\F)$--vector space of dimension at most $q$.
\end{proof}

\subsection{Transversely Lie foliations}
Let $\F$ be a  codimension $q$ foliation on a projective manifold $X$ and let $\mathfrak g$ be a finite dimensional Lie algebra over $\C$.
A rational transverse $\mathfrak g$-structure for $\F$ is a rational $1$-form $\Omega$ taking values in $\mathfrak g$ such that
\begin{enumerate}
	\item\label{I:lie1} the Lie algebra $\mathfrak g$ has dimension $q$;
	\item\label{I:lie2}  $T_\F$,  the tangent sheaf of $\F$,  coincides with the kernel of morphism
	\[
		T_X \longrightarrow \mathfrak g \otimes \Mer_X
	\]
	defined by contraction with $\Omega$; and
	\item\label{I:lie3} the $1$-form $\Omega$ satisfies the integrability condition
	$
		d \Omega + \frac{1}{2} [\Omega, \Omega ] =0 \, .
	$
\end{enumerate}

Let ${\Omega}'$ be another $\mathfrak g$-valued $1$-form fulfilling properties (\ref{I:lie1}), (\ref{I:lie2}), (\ref{I:lie3}) above. We will say that ${\Omega}'$ and $\Omega$ define the same transverse $\mathfrak g$-structure for $\F$ if there exists a Lie algebra automorphism $\varphi$ of $\mathfrak g$ such that ${\Omega}'=\varphi(\Omega)$.

A  foliation $\F$ endowed with a transverse $\mathfrak g$-structure $\Omega$ will be called a (singularly) transversely Lie foliation modeled over $\mathfrak g$.  Beware that the same foliation may admit several transverse $\mathfrak g$-structures.

Let $\chi$ be a $\mathbb C$-Lie subalgebra of $\kerBott(X/\F)$ isomorphic to $\mathfrak g$ via $\psi : \chi \to \mathfrak g$.
Assume that $\chi$ has non trivial determinant in $\bigwedge^q N_\F \otimes \mathbb C(X)$.
These assumptions implies the existence of a unique $1$-form satisfying (\ref{I:lie1}), (\ref{I:lie2}), (\ref{I:lie3}) and such that for any $v\in \chi$, $\Omega (v)=\psi(v)$. Note that conversely, the datum of $\Omega$ as above defines unambiguously a Lie subalgebra ${\mathfrak g }_\Omega$ of $\kerBott(X/\F)$ isomorphic to $\mathfrak g$, namely the subset of rational sections of $N_\F$ determined by $\Omega^{-1} (\mathfrak g\otimes 1) \mod T_{\F}$.

We  summarize this discussion in the next proposition.

\begin{prop}\label{P:transversely Lie}
    Let $\F$ be a foliation of codimension $q$ on a complex manifold $X$ and $\mathfrak g$ be a $q$-dimensional Lie algebra. The set of transverse $\mathfrak g$-structures for $\F$ corresponds to the  $\mathbb C$-Lie subalgebras of $\kerBott(X/\F)$ isomorphic to $\mathfrak g$  and with non zero determinant  in $\bigwedge^q N_\F \otimes \mathbb C(X)$.
\end{prop}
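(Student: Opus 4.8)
The statement merely records, in the form of a bijection, the two constructions outlined just above it, so the plan is to make each construction precise, check that the three defining conditions of a transverse $\mathfrak g$-structure hold, and verify that the two constructions are mutually inverse and descend to the respective equivalence classes. First I would construct the map from subalgebras to structures. Let $\chi \subset \kerBott(X/\F)$ be a $\C$-Lie subalgebra with a fixed isomorphism $\psi : \chi \to \mathfrak g$ and nonzero determinant in $\bigwedge^q N_\F \otimes \C(X)$. Fixing a $\C$-basis $v_1, \dots, v_q$ of $\chi$, the determinant hypothesis says precisely that $v_1 \wedge \cdots \wedge v_q \neq 0$, so the $v_i$ form an $\Mer_X$-basis of $N_\F \otimes \Mer_X$; hence $\psi$ extends uniquely to an $\Mer_X$-linear isomorphism $N_\F \otimes \Mer_X \xrightarrow{\sim} \mathfrak g \otimes \Mer_X$, and composing with the projection $T_X \to N_\F \otimes \Mer_X$ yields a $\mathfrak g$-valued rational $1$-form $\Omega$. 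By construction $\ker \Omega = T_\F$ and $\dim \mathfrak g = q$, so conditions (\ref{I:lie1}) and (\ref{I:lie2}) are immediate, and $\Omega$ is the unique $1$-form with $\Omega(v) = \psi(v)$ for $v \in \chi$.

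The substance of this direction, and the step I expect to be the main obstacle, is the verification of the integrability condition (\ref{I:lie3}). Writing $e_i = \psi(v_i)$ and expanding $\Omega = \sum_i e_i\, \theta_i$ in terms of the rational $1$-forms $\theta_i$ dual to the frame (so $\theta_i(\hat v_j) = \delta_{ij}$ and $\theta_i|_{T_\F} = 0$ for chosen lifts $\hat v_i$), the Maurer--Cartan equation becomes the system $d\theta_k + \tfrac{1}{2}\sum_{i,j} c_{ij}^k\, \theta_i \wedge \theta_j = 0$, where $c_{ij}^k$ are the structure constants of $\mathfrak g$. I would verify this by evaluating both sides on pairs drawn from the frame $\hat v_1, \dots, \hat v_q$ together with a local basis of $T_\F$, using the Cartan formula $d\theta(V,W) = V\theta(W) - W\theta(V) - \theta([V,W])$. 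The only nonzero contribution comes from a pair $\hat v_a, \hat v_b$, where flatness of the $v_i$ gives $\theta_k([\hat v_a, \hat v_b]) = c_{ab}^k$ because $[\hat v_a, \hat v_b] \equiv \sum_k c_{ab}^k \hat v_k \bmod T_\F$ is exactly the bracket computed in $\kerBott(X/\F)$ via Proposition \ref{P:bracket}; this cancels against the wedge term. Pairs involving $T_\F$ vanish, since the $\theta_k$ kill $T_\F$, the functions $\theta_k(\hat v_i)$ are constant, and flatness forces $[\hat v_a, w] \in T_\F$ for $w \in T_\F$. Hence (\ref{I:lie3}) holds as an identity of rational $2$-forms.

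For the reverse map, given $\Omega$ satisfying (\ref{I:lie1})--(\ref{I:lie3}), I would set $\mathfrak g_\Omega = \Omega^{-1}(\mathfrak g \otimes 1) \bmod T_\F$ and read off its properties from the dual identity $\Omega([V,W]) = V\,\Omega(W) - W\,\Omega(V) + [\Omega(V), \Omega(W)]$, which is just (\ref{I:lie3}) rewritten. Taking $V \in T_\F$ and $W = \hat v_i$ with $\Omega(\hat v_i) = e_i$ constant gives $\Omega([V, \hat v_i]) = 0$, so $[V, \hat v_i] \in T_\F$ and $v_i$ is flat; taking $V = \hat v_i, W = \hat v_j$ gives $\Omega([\hat v_i, \hat v_j]) = [e_i, e_j]$, so $\mathfrak g_\Omega$ is a $\C$-Lie subalgebra of $\kerBott(X/\F)$ with the structure constants of $\mathfrak g$, hence isomorphic to it, and its determinant is nonzero because $\Omega$ is an $\Mer_X$-isomorphism.

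Finally I would check the two constructions are mutually inverse and compatible with the equivalence relations. A change of isomorphism $\psi$ replaces $\Omega$ by $\varphi(\Omega)$ for $\varphi \in \Aut(\mathfrak g)$, which leaves $\mathfrak g_\Omega = \Omega^{-1}(\mathfrak g)$ unchanged; conversely $\Omega^{-1}(\mathfrak g) = \chi$ because the $\Mer_X$-linear extension of $\psi$ pulls the $\C$-points $\mathfrak g$ back to exactly $\chi$. Thus $\chi \mapsto [\Omega]$ and $[\Omega] \mapsto \mathfrak g_\Omega$ are well-defined on equivalence classes and invert one another, establishing the asserted correspondence. The only genuinely delicate point is the equivalence, carried out above, between the differential condition (\ref{I:lie3}) and the purely algebraic statement that the flat frame closes into a copy of $\mathfrak g$ inside $\kerBott(X/\F)$; everything else is bookkeeping with the generic frame and the identification of $N_\F$ with $T_X/T_\F$ away from $\sing(\F)$.
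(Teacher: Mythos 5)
Your proposal is correct and follows the same route as the paper, which states the proposition merely as a summary of the two constructions sketched in the preceding paragraph (subalgebra $\chi$ with frame $\psi$ gives $\Omega$; conversely $\mathfrak g_\Omega=\Omega^{-1}(\mathfrak g\otimes 1)\bmod T_{\F}$) and leaves the verifications implicit. Your Cartan-formula check of the Maurer--Cartan identity is exactly the computation the paper carries out in the proof of Lemma \ref{L:transitivetransverse}, so you have simply supplied the details the authors chose to omit.
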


Every time that we say that $\F$ is a transversely Lie foliation modeled over $\mathfrak g$, we will tacitly fix a transverse $\mathfrak g$-structure for $\F$.

\begin{cor}
    If the field of rational first integrals for $\F$ is $\mathbb C$ then $\F$ admits at most one transverse $\mathfrak g$-structure.
\end{cor}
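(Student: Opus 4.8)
The plan is to reduce the statement to a pure dimension count, using the dictionary between transverse $\mathfrak{g}$-structures and Lie subalgebras of $\kerBott(X/\F)$ furnished by Proposition~\ref{P:transversely Lie} together with the dimension bound of Proposition~\ref{P:bracket}. The guiding idea is that the hypothesis $\C(X/\F) = \C$ collapses the scalar field over which $\kerBott(X/\F)$ is a vector space, so that its $\C$-dimension is forced to be at most $q$; since any subalgebra modelling a transverse $\mathfrak{g}$-structure must have $\C$-dimension exactly $q$, there is simply no room for more than one such subalgebra.

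Concretely, first I would invoke Proposition~\ref{P:transversely Lie} to identify the set of transverse $\mathfrak{g}$-structures for $\F$ with the set of $\C$-Lie subalgebras $\chi \subseteq \kerBott(X/\F)$ that are isomorphic to $\mathfrak{g}$ and have nonzero determinant in $\bigwedge^q N_\F \otimes \C(X)$. Next I would apply Proposition~\ref{P:bracket}: since $\C(X/\F) = \C$, the space $\kerBott(X/\F)$ is a $\C$-vector space of dimension at most $q$. By the first requirement in the definition of a transverse $\mathfrak{g}$-structure, $\mathfrak{g}$ is $q$-dimensional, so any subalgebra $\chi$ isomorphic to $\mathfrak{g}$ has $\dim_\C \chi = q$. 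Comparing the two bounds forces $\dim_\C \kerBott(X/\F) = q$ and $\chi = \kerBott(X/\F)$; in particular the underlying set of $\chi$ is uniquely determined, whence there is at most one transverse $\mathfrak{g}$-structure.

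I do not expect a genuine obstacle here: the corollary is essentially a bookkeeping consequence of the two preceding propositions, and the only points that require a moment's care are the existence conditions attached to a valid structure. Namely, a candidate subalgebra must actually be closed under the Lie bracket, be abstractly isomorphic to $\mathfrak{g}$, and have nonzero determinant; any of these may fail for the full space $\kerBott(X/\F)$. The key observation making the argument robust is that all of these are conditions constraining \emph{existence} rather than \emph{uniqueness}: if $\dim_\C \kerBott(X/\F) < q$, or if the unique $q$-dimensional candidate fails one of the three conditions, then $\F$ admits \emph{no} transverse $\mathfrak{g}$-structure, which is still consistent with the claimed bound of at most one. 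Thus in every case the number of transverse $\mathfrak{g}$-structures is either zero or one, as asserted.
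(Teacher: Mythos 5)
Your argument is correct and is exactly the intended one: the paper states this corollary without proof as an immediate consequence of Proposition~\ref{P:transversely Lie} together with the bound $\dim_{\C(X/\F)}\kerBott(X/\F)\le q$ from Proposition~\ref{P:bracket}, which under $\C(X/\F)=\C$ forces any $q$-dimensional subalgebra to be all of $\kerBott(X/\F)$. Your remark that the existence conditions (bracket-closure, isomorphism type, nonzero determinant) only affect existence, not uniqueness, is the right way to close the argument.
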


If $\xi_1, \ldots, \xi_q$ is a basis for the Lie algebra $\mathfrak g$ satisfying
$
    [\xi_i, \xi_j ] = \sum_{k=1}^q \lambda_{ij}^k \xi_k
$
then the $1$-form $\Omega$ can be explicitly written as
$
    \Omega = \sum_{i=1}^q \omega_i \otimes \xi_i
$
where $\omega_i$ are rational $1$-forms on $X$. The integrability condition is then equivalent to the identities
\[
    d \omega_k = - \frac{1}{2} \sum_{i,j=1}^q \lambda_{ij}^k \omega_i \wedge \omega_j \, ,
\]
where $k$ ranges from $1$ to $q$.

Let $D$ be the polar divisor of $\Omega$ and let $G$ be a connected Lie group with Lie algebra $\mathfrak g$.
Darboux's fundamental theorem of calculus \cite[Chapter 3, Theorem 7.14]{MR1453120} implies that at any simply connected
analytic neighborhood $U \subset X- D$ of a point  $ x \in X- D$ there exists a holomorphic map $F : U \to G$ such that $\Omega$ coincides with the pull-back of the Maurer-Cartan $1$-form on $G$. Moreover, any two maps $F, \tilde F : U \to G$  with this property differ by left multiplication by an element
of $G$.

\subsection{Transversely parallelizable foliations}
We will say that  of a foliation $\F$ is (singularly) transversely parallelizable if the  $\mathbb C(X/\F)$-vector space $\kerBott(X/\F)$
has dimension equal to the codimension of $\F$. 

\begin{remark}
    The terminology singularly transversely parallelizable  is adapted from  \cite{MR0653455}, see also \cite[Chapter 4]{MR0932463}. Throughout, we will drop the adjective singularly to abbreviate.
\end{remark}

As follows from the definitions, every transversely Lie foliations is also transversely parallelizable.  However, the converse does not hold in general.  Nonetheless, when $\mathbb C(X/\F) = \C$, the two concepts coincide.

\begin{lemma}\label{L:transitivetransverse}
    Let $\F$ be a codimension $q$ foliation on a projective manifold $X$.
    The foliation $\F$ is transversely parallelizable  if, and only
    if, there exists rational $1$-forms $\omega_1, \ldots, \omega_q \in H^0(X,N^*_{\F} \otimes \Mer_X)$
    which generate $N^*_{\F}$ at a general point of $X$ and satisfy
    \begin{equation}\label{E:dualtransverse}
        d \omega_k = - \frac{1}{2} \sum_{i,j=1}^q \lambda_{ij}^k \omega_i \wedge \omega_j
    \end{equation}
    for a suitable collections of rational functions $\lambda_{ij}^k$ satisfying $\lambda_{ij}^k = -\lambda_{ji}^k$.
    Furthermore, the functions $\lambda_{ij}$ are rational first integrals for $\F$.
\end{lemma}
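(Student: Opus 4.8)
The plan is to prove both directions by exploiting the correspondence between $\kerBott(X/\F)$ and rational $1$-forms cutting out $N^*_\F$, with the integrability identity \eqref{E:dualtransverse} playing the role of the dual Maurer--Cartan equation. The key mechanism is duality: Bott's partial connection on $N_\F$ induces, as recalled in Section~\ref{SS:partial}, a dual partial connection $\nabla^*$ on $N^*_\F$, and a rational section $\omega$ of $N^*_\F$ is $\nabla^*$-flat precisely when its contraction with flat sections of $N_\F$ is locally constant along the leaves. I expect the crucial computational link to be the Cartan-type formula expressing $d\omega(\hat\xi_i, \hat\xi_j)$ in terms of brackets and contractions, which translates flatness of the $\omega_i$ into the structure equations \eqref{E:dualtransverse}.

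For the forward direction, suppose $\F$ is transversely parallelizable, so $\dim_{\C(X/\F)}\kerBott(X/\F)=q$. I would pick a $\C(X/\F)$-basis $\xi_1,\dots,\xi_q$ of $\kerBott(X/\F)$; by Proposition~\ref{P:bracket} these have nonzero determinant in $\bigwedge^q N_\F\otimes\C(X)$ (if the determinant vanished, they would be $\C(X)$-linearly dependent, hence by the final argument in the proof of Proposition~\ref{P:bracket} they would be $\C(X/\F)$-linearly dependent, contradicting that they form a basis). I then let $\omega_1,\dots,\omega_q$ be the dual rational coframe, i.e. the rational sections of $N^*_\F$ with $\omega_i(\xi_j)=\delta_{ij}$; these generate $N^*_\F$ at a general point. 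To obtain \eqref{E:dualtransverse}, I evaluate $d\omega_k$ on lifts $\hat\xi_i,\hat\xi_j$ using the identity $d\omega_k(\hat\xi_i,\hat\xi_j)=\hat\xi_i\,\omega_k(\hat\xi_j)-\hat\xi_j\,\omega_k(\hat\xi_i)-\omega_k([\hat\xi_i,\hat\xi_j])$. Since $\omega_k(\hat\xi_j)=\delta_{kj}$ is constant the first two terms drop, and since $[\hat\xi_i,\hat\xi_j]$ projects into $\kerBott(X/\F)$ we may write $[\xi_i,\xi_j]=\sum_k\lambda_{ij}^k\xi_k$ for rational functions $\lambda_{ij}^k$, so $d\omega_k(\hat\xi_i,\hat\xi_j)=-\lambda_{ij}^k$. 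Because the $\omega_i$ span $N^*_\F$ generically and $d\omega_k$ vanishes on $T_\F$ (as $N^*_\F$ is $\nabla^*$-flat here), the $2$-form $d\omega_k$ is determined by these values, yielding exactly \eqref{E:dualtransverse}.

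For the converse, given rational $1$-forms $\omega_1,\dots,\omega_q$ generating $N^*_\F$ generically and satisfying \eqref{E:dualtransverse}, I would define $\xi_1,\dots,\xi_q$ as the dual rational frame of $N_\F$ and check $\nabla(\xi_k)=0$. Concretely, flatness of $\xi_k$ means $[\hat\xi_k, v]\in T_\F$ for every rational section $v$ of $T_\F$; I verify this by pairing against each $\omega_j$ and using the Cartan formula backwards, where the right-hand side of \eqref{E:dualtransverse} contracted with $v\in T_\F$ vanishes because each $\omega_i(v)=0$. This gives $q$ linearly independent flat sections, forcing $\dim_{\C(X/\F)}\kerBott(X/\F)=q$, i.e. transverse parallelizability.

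The main obstacle I anticipate is the final assertion that the $\lambda_{ij}^k$ are themselves rational first integrals, rather than merely rational functions. To establish this I would apply $d$ to both sides of \eqref{E:dualtransverse} and use $d^2=0$, obtaining $\sum_{i,j}d\lambda_{ij}^k\wedge\omega_i\wedge\omega_j=0$ after the $d\omega_i$ terms recombine via the structure equations into the Jacobi-type expression that vanishes identically. Since the products $\omega_i\wedge\omega_j$ are generically independent in $\bigwedge^2 N^*_\F$, this forces each $d\lambda_{ij}^k$ to lie in the span of the $\omega_i$ at a general point, i.e. $\restr{d\lambda_{ij}^k}{T_\F}=0$, which is precisely the statement that $\lambda_{ij}^k\in\C(X/\F)$. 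The delicate point is bookkeeping the recombination of the $d\omega_i$ terms and confirming that no obstruction survives; this is the structure-constant cocycle condition, and I would carry it out carefully to ensure the coefficients of the independent $3$-forms $\omega_i\wedge\omega_j\wedge\omega_\ell$ isolate the derivatives $d\lambda_{ij}^k$ cleanly.
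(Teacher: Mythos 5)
Your two directions of the equivalence follow the paper's proof essentially verbatim: a $\C(X/\F)$-basis of $\kerBott(X/\F)$ with rational lifts, the dual coframe, Cartan's formula to get $d\omega_k(\hat\xi_i,\hat\xi_j)=-\lambda_{ij}^k$ together with the vanishing of $d\omega_k$ on $T_\F\otimes T_X$, and, conversely, the contraction $i_{\hat\xi_\ell}d\omega_k$ landing in the span of the $\omega_i$ to produce $q$ independent flat sections. Where you diverge is the final assertion that $\lambda_{ij}^k\in\C(X/\F)$. The paper first wedges \eqref{E:dualtransverse} with $\omega_1\wedge\cdots\wedge\widehat{\omega_i}\wedge\cdots\wedge\widehat{\omega_j}\wedge\cdots\wedge\omega_q$ and only then differentiates, so that every term except $d\lambda_{ij}^k\wedge\omega_1\wedge\cdots\wedge\omega_q$ dies for degree reasons (any $(q+1)$-fold product of the $\omega$'s vanishes); this sidesteps all bookkeeping. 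Your route --- expanding $d^2\omega_k=0$ in full --- also works, but the step you flag as delicate is genuinely the weak point as written: since the $\lambda_{ij}^k$ are functions rather than constants, the quadratic terms do \emph{not} recombine into the naive Jacobi expression; the honest Jacobi identity for $[\xi_i,\xi_j]=\sum_k\lambda_{ij}^k\xi_k$ picks up transverse derivatives $\hat\xi_c(\lambda_{ij}^k)$, and only the combination of those with the quadratic terms cancels. Moreover, your inference ``the $\omega_i\wedge\omega_j$ are independent, hence $d\lambda_{ij}^k$ lies in their span'' presupposes that the leftover is exactly $\sum d\lambda_{ij}^k\wedge\omega_i\wedge\omega_j=0$, which is what the cancellation is supposed to deliver. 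The clean repair inside your framework is to skip the cancellation entirely: contract the identity $d^2\omega_k=0$ with a single rational section $v$ of $T_\F$. Every triple product $\omega_a\wedge\omega_b\wedge\omega_c$ is annihilated by $i_v$, so one is left with $\sum_{i,j}(d\lambda_{ij}^k(v))\,\omega_i\wedge\omega_j=0$, and the generic independence of the $\omega_i\wedge\omega_j$ gives $\restr{d\lambda_{ij}^k}{T_\F}=0$ directly. With that adjustment your argument is complete and marginally more elementary than the paper's, at the cost of being less slick.
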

\begin{proof}
    Assume the foliation $\F$ is transversely parallelizable and let $\xi_1, \ldots, \xi_q \in H^0(X,N_\F \otimes \Mer_X)$
    be a basis of the $\mathbb C(X/\F)$-vector space $\kerBott(X/\F)$. Let also $\hat \xi_1, \ldots, \hat \xi_q \in H^0(X,T_X \otimes \Mer_X)$ be rational lifts of $\xi_1, \ldots, \xi_q$. As previously explained, We can write
    \begin{equation}\label{E:dualdual}
        [ \xi_i,\xi_j ] = \sum_{k=1}^q \lambda_{ij}^k \xi_k \quad \text{ or, equivalently, } \quad [ \hat \xi_i, \hat \xi_j ] = \sum_{k=1}^q \lambda_{ij}^k \hat \xi_k \mod T_{\F}
    \end{equation}
    where $\{\lambda_{ij}^k\}$ is a collection of rational functions, anti-symmetric on the lower indices. Let $\omega_1, \ldots, \omega_q \in H^0(X,N^*_\F \otimes \Mer_X)$ be rational sections of $N^*_\F$ dual to $\hat \xi_1, \ldots, \hat \xi_q$, \ie $\omega_i(\hat \xi_j) = \delta_{ij}$ (Kroenecker delta). We claim that this collection of $1$-forms satisfies Equation (\ref{E:dualtransverse}). To verify this fact, first notice that, for any choice of $i,j,k$,   $d \omega_i$ and $\omega_j \wedge \omega_k$  vanish when evaluated on $T_\F \otimes T_X$. This fact is clear for $\omega_j \wedge \omega_k$ but needs further elaboration for $d \omega_i$. Indeed, the integrability of $N^*_{\F}$ implies that $d\omega_i$ vanishes when evaluated on $T_\F \otimes T_\F$. Moreover, it also vanishes when evaluated at $T_{\F} \otimes \mathbb C \hat \xi_j$ thanks to Cartan's formula. It follows that $d\omega_i$ vanishes on $T_\F \otimes T_X$ as claimed. 
        
    To verify the validity of Equation (\ref{E:dualtransverse}), it remains to check the equality of both sides when evaluated on the elements $\hat \xi_i \otimes \hat \xi_j$. The result of the righthand side is clearly $- \lambda_{ij}^k$. To compute the lefthand side, apply Cartan's formula again and obtain
    \[
        d \omega_k ( \hat \xi_i, \hat \xi_j) = \hat \xi_i ( \omega_k( \hat \xi_j) ) - \hat \xi_j (\omega_k( \hat \xi_1)) - \omega_k([ \hat \xi_i, \hat \xi_j]) = -\lambda_{ij}^k
    \]
    as claimed. 

    If instead we start assuming the validity of Equation (\ref{E:dualtransverse}), we define $\xi_1, \ldots, \xi_q \in H^0(X,N_\F\otimes \Mer_X)$
    as a dual basis of $\omega_1, \ldots, \omega_q$, and let $\hat \xi_1, \ldots, \hat \xi_q \in H^0(X,T_X \otimes \Mer_X)$ be rational lifts as before.  Since
    \[
        L_{\hat \xi_\ell} \omega_k = i_{ \hat \xi_\ell} d \omega_k + d i_{ \hat \xi_\ell} \omega_k  = -\frac{1}{2} i_{ \hat \xi_\ell} \left( \sum_{i,j=1}^q \lambda_{ij}^k \omega_i \wedge \omega_j \right)
    \]
    clearly belongs to $H^0(X,N^*_\F\otimes \Mer_X)$, we deduce that $\xi_1, \ldots, \xi_q$ are infinitesimal symmetries of $\F$ which generate
    the $\C(X/\F)$-vector space  $\kerBott(X/\F)$. Moreover, Cartan's formula implies that they must satisfy Equation (\ref{E:dualdual}).

    To conclude, we proceed to prove that the rational functions $\lambda_{ij}^k$ are first integrals for $\F$. Fix $i,j,k$ and take the wedge
    product of Equation (\ref{E:dualtransverse}) with $\omega_1\wedge \cdots \wedge \widehat{\omega_i} \wedge \cdots \wedge \widehat{\omega_j} \wedge \cdots \wedge \omega_q$. Differentiate both sides,  observe that the lefthand side equals zero, and obtain that
    \[
        d \lambda_{ij}^k \wedge \omega_1 \wedge \cdots \wedge \omega_q=0
    \]
    as wanted.
\end{proof}

\subsection{Transversely homogeneous foliations}
Another variant of the concept of singularly transversely Lie foliations, central to our discussion, is the notion of singularly transversely homogeneous foliations. 

Let $\F$ be a foliation on a projective manifold $X$. A singularly transversely homogeneous structure for $\F$ consists of the following data:
\begin{enumerate}
    \item a rational $1$-form $\Omega$ with values in a finite-dimensional Lie algebra $\mathfrak g$ satisfying the Maurer-Cartan integrability condition $d \Omega + \frac{1}{2} [\Omega, \Omega ] =0$; and
    \item a Lie subalgebra $\mathfrak h \subset \mathfrak g$, such that $\dim \mathfrak g - \dim \mathfrak h = \codim \F$ and the tangent sheaf $T_{\F}$ is the kernel of the composition 
\[
    T_{X} \longrightarrow \mathfrak g \otimes \Mer_X \longrightarrow  \frac{\mathfrak g}{\mathfrak h}\otimes \Mer_X \, .
\]
where  the first map is defined by contraction with $\Omega$ and the second is the natural projection.
\end{enumerate}

We say that a foliation $\F$ is singularly transversely homogeneous if it admits a singularly transversely homogeneous structure.  As usual, we will tacitly omit the word singularly. 

\begin{remark}
    Usually, one considers transversely homogeneous structures as equivalence classes of $1$-forms with values in $\mathfrak g$ up to certain equivalence relations. As we are more interested in the existence of transversely homogeneous structures, we will by pass this discussion. Nevertheless, we will need to get back to it in the case of transversely homogeneous structures for codimension one foliations in the course of the proof of our last result (Theorem \ref{THM:infinitas}) in Section \ref{S:final}. 
\end{remark}

Assume now that $\F$ is a codimension one foliation. Let us present some possibilities for the pair $(\mathfrak g, \mathfrak h)$.

\begin{example}[Transversely additive foliations] 
    The Lie algebra $\mathfrak g$ is the one-dimensional Lie algebra and $\mathfrak h=0$. A transversely homogeneous structure for this pair is simply a closed rational $1$-form that defines the foliation. 
\end{example}

\begin{example}[Transversely affine foliations]
    The Lie algebra $\mathfrak g$ is the two-dimensional nonabelian Lie algebra $\mathfrak{aff}(\C)$ and $\mathfrak h$ is an one-dimensional abelian subalgebra different from the ideal $[\mathfrak g,\mathfrak g]$. A transversely homogeneous structure for a foliation $\F$ now consists of a rational $1$-form $\omega_0$ defining $\F$ and a closed rational $1$-form $\omega_1$  such that $d\omega_0 = \omega_0 \wedge \omega_1$.
\end{example}

\begin{example}[Transversely projective foliations]
    The Lie algebra $\mathfrak g$ is $\mathfrak{sl}(2,\C)$ and the subalgebra $\mathfrak h$ is a Borel subalgebra. Concretely, if we think of $\mathfrak{sl}(2,\C)$ as the $2\times 2$ matrices with zero trace, then we can take $\mathfrak h$ to be the lower triangular matrices. A transversely homogenous structure for $\F$  consists of a rational $1$-form $\omega_0$ defining $\F$ and  rational $1$-forms $\omega_1$ and $\omega_2$  such that 
    \begin{align*}
        d\omega_0 &= \omega_0 \wedge \omega_1 \, , \\
        d\omega_1 &= \omega_0 \wedge \omega_2 \, , \\
        d\omega_2 &= \omega_1 \wedge \omega_2 \, .
    \end{align*}
\end{example}

It turns out that these are, essentially, the only possibilities of transversely homogeneous structures for codimension one foliations. This is a consequence of  Tits' version \cite{MR120308} of a classical result by Lie stated below. 

\begin{lemma}[Lie-Tits]\label{L:LieTits} 
    Let $\mathfrak g$ be a complex finite-dimensional Lie algebra. If  $\mathfrak h \subset \mathfrak g$ is a codimension one subalgebra then there exists an ideal $\mathfrak I$ of $\mathfrak g$, contained in $\mathfrak h$, such that $\mathfrak g / \mathfrak I$ is a subalgebra of $\mathfrak{sl}_2$.
\end{lemma}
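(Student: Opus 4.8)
The plan is to reduce to the case where $\mathfrak h$ contains no nonzero ideal of $\mathfrak g$, and then to recognize such a $\mathfrak g$ as (the abstract shadow of) a transitive Lie algebra of formal vector fields on a one-dimensional disc, for which the bound $\dim\mathfrak g\le 3$ and the embedding into $\mathfrak{sl}_2$ reduce to a finite bracket computation. For the reduction, let $\mathfrak I$ be the \emph{core} of $\mathfrak h$, that is, the largest ideal of $\mathfrak g$ contained in $\mathfrak h$; it exists because the sum of two ideals contained in $\mathfrak h$ is again an ideal contained in $\mathfrak h$. Passing to $\mathfrak g/\mathfrak I$ and $\mathfrak h/\mathfrak I$, the latter is still a codimension one subalgebra, and now it contains no nonzero ideal of $\mathfrak g/\mathfrak I$ (any such ideal would pull back to an ideal of $\mathfrak g$ inside $\mathfrak h$ and strictly larger than $\mathfrak I$). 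It therefore suffices to prove that a \emph{core-free} codimension one subalgebra $\mathfrak h\subset\mathfrak g$ forces $\mathfrak g$ to be isomorphic to a subalgebra of $\mathfrak{sl}_2$; the required ideal is then $\mathfrak I$ itself.

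For the core-free case I would introduce the Weisfeiler filtration $\mathfrak g=\mathfrak g^{-1}\supsetneq\mathfrak g^{0}=\mathfrak h\supseteq\mathfrak g^{1}\supseteq\cdots$, defined for $k\ge 0$ by $\mathfrak g^{k+1}=\{v\in\mathfrak g^{k}:[v,\mathfrak g]\subseteq\mathfrak g^{k}\}$. Its intersection is exactly the core of $\mathfrak h$, hence trivial, so the filtration terminates. The two structural facts to record are that $[\mathfrak g^{i},\mathfrak g^{j}]\subseteq\mathfrak g^{i+j}$ and that, directly from the definition, each graded piece $\mathrm{gr}_k=\mathfrak g^{k}/\mathfrak g^{k+1}$ acts \emph{faithfully} into $\Hom(\mathrm{gr}_{-1},\mathrm{gr}_{k-1})$ via the bracket. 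Since $\mathrm{gr}_{-1}=\mathfrak g/\mathfrak h$ is one-dimensional, an induction gives $\dim\mathrm{gr}_k\le 1$ for all $k$, and the transitivity built into the filtration lets one normalize generators so that the associated graded embeds, as a graded subalgebra, into the formal vector field algebra $W=\bigoplus_{k\ge -1}\C\,e_k$, where $e_k$ plays the role of $t^{k+1}\partial_t$ and $[e_i,e_j]=(j-i)e_{i+j}$. Because each $W$-component is itself one-dimensional, a nonzero $\mathrm{gr}_k$ is identified with the full line $\C e_k$.

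The heart of the matter, and the step I expect to be the main obstacle, is to show that no weight $\ge 2$ survives in the associated graded; this is precisely where finite-dimensionality is used. Suppose $M\ge 2$ were the largest weight with $\mathrm{gr}_M\neq 0$. Transitivity gives $\mathrm{gr}_{-1}\neq 0$, and $[e_{-1},e_M]=(M+1)e_{M-1}$ then forces $\mathrm{gr}_{M-1}\neq 0$; but now $[e_{M-1},e_M]=e_{2M-1}$ is a nonzero element of weight $2M-1>M$ lying in the graded subalgebra, contradicting the maximality of $M$. Hence $\mathrm{gr}_k=0$ for all $k\ge 2$, so $\dim\mathfrak g\le 3$ and $\mathrm{gr}\,\mathfrak g$ is a graded subalgebra of $\C e_{-1}\oplus\C e_0\oplus\C e_1\cong\mathfrak{sl}_2$.

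It remains to upgrade this to a statement about $\mathfrak g$ itself, by a short case analysis on $\dim\mathfrak g\in\{1,2,3\}$. For $\dim\mathfrak g\le 2$ the classification of low-dimensional Lie algebras applies, the only point being that core-freeness rules out the two-dimensional abelian algebra (every line in an abelian algebra is an ideal), leaving the nonabelian $\mathfrak{aff}(\C)$, a Borel subalgebra of $\mathfrak{sl}_2$. For $\dim\mathfrak g=3$, since passing to the associated graded preserves solvability (one has $\mathrm{gr}\,\mathfrak g^{(k)}\supseteq(\mathrm{gr}\,\mathfrak g)^{(k)}$ for the derived series), the non-solvability of $\mathrm{gr}\,\mathfrak g=\mathfrak{sl}_2$ forces $\mathfrak g$ to be non-solvable, hence semisimple, hence isomorphic to the unique three-dimensional complex semisimple Lie algebra $\mathfrak{sl}_2$. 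In every case $\mathfrak g/\mathfrak I$ embeds into $\mathfrak{sl}_2$, which is the assertion of the lemma.
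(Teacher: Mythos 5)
The paper does not prove this lemma itself; it simply quotes Tits' result \cite{MR120308}, so your argument has to stand on its own. Your overall strategy is a sound and classical one: pass to the core of $\mathfrak h$, build the Weisfeiler filtration $\mathfrak g^{-1}\supset\mathfrak g^{0}\supseteq\mathfrak g^{1}\supseteq\cdots$, bound each graded piece by one dimension, and compare with the formal vector field algebra. The reduction to the core-free case, the faithfulness of $\mathrm{gr}_k\to\Hom(\mathrm{gr}_{-1},\mathrm{gr}_{k-1})$ giving $\dim\mathrm{gr}_k\le 1$, and the final case analysis in dimensions $1,2,3$ (including the solvability comparison between $\mathfrak g$ and $\mathrm{gr}\,\mathfrak g$) are all correct.

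However, the step you yourself identify as the heart of the matter --- ruling out a top weight $M\ge 2$ --- is circular as written. Your contradiction rests on ``$[e_{M-1},e_M]=e_{2M-1}\neq 0$'', but that identity holds in $W$, not in $\mathrm{gr}\,\mathfrak g$: there the bracket $[\mathrm{gr}_{M-1},\mathrm{gr}_M]$ lands in $\mathrm{gr}_{2M-1}$, which is zero by the maximality of $M$, so the bracket simply vanishes and no contradiction arises from it. The prior assertion that $\mathrm{gr}\,\mathfrak g$ ``embeds as a graded subalgebra of $W$'' is exactly equivalent to $M\le 1$, since for $M\ge 2$ the truncation $\bigoplus_{k=-1}^{M}\C e_k$ is not closed under the $W$-bracket; so you are assuming the conclusion. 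What your normalization genuinely establishes is that the structure constants agree with those of $W$ in all degrees $\le M$; the missing ingredient is a proof that forcing the brackets in degrees $>M$ to vanish violates Jacobi when $M\ge 2$. One clean repair: after normalizing so that $[e_{-1},e_1]=2e_0$ and $[e_0,e_k]=k\,e_k$, the elements $e_1$, $2e_0$, $-e_{-1}$ form an $\mathfrak{sl}_2$-triple acting on $\mathrm{gr}\,\mathfrak g$ by the adjoint representation, and $e_M$ is a highest-weight vector (killed by $\operatorname{ad}(e_1)$) of weight $2M$; finite-dimensional $\mathfrak{sl}_2$-theory then gives $\operatorname{ad}(e_{-1})^{2M}e_M\neq 0$, an element of $\mathrm{gr}_{-M}$, which is zero once $M\ge 2$ because the filtration has depth one. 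A direct chain of Jacobi identities starting from $0=[e_{-1},[e_1,e_M]]$ also works, but some such computation is indispensable; with it supplied, the rest of your proof goes through.
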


Indeed, if $(\mathfrak g, \mathfrak h)$ is a pair of Lie algebras with $\mathfrak h \subset \mathfrak g$ and $\mathfrak h$ of codimension one in $\mathfrak g$ then Lemma \ref{L:LieTits} above implies the existence of an ideal $\mathfrak I$ of $\mathfrak g$ such that the pair $(\mathfrak g/\mathfrak I, \mathfrak h/\mathfrak I)$ fits into one of the three examples above.

\subsection{Bott's partial connection on the conormal sheaf} 
If $\F$ is a germ of smooth codimension $q$ foliation and $\omega_1, \ldots, \omega_q$ is a basis of $N^*_{\F}$ then the integrability condition is equivalent to the existence of germs of $1$-forms $\eta_{ij}$ such that 
\[
    d \omega_i = \sum_{j=1}^q \eta_{ij} \wedge \omega_j , .
\]
for every $i \in \{ 1, \ldots, q\}$. In this setting, Bott's partial connection on $N^*_{\F}$, that is the connection on $N^*_{\F}$ determined by the connection presented in Subsection \ref{SS:Bott partial normal} can be made explicit through the formula
\[
    \nabla(\omega_i) = \sum_{j=1}^q \restr{ \eta_{ij} }{T_\F} \otimes \omega_j \, .
\]

It is convenient to interpret Lemma \ref{L:transitivetransverse} in terms of Bott's partial connection on  $N^*_{\F}$. It is saying that rational sections $\omega_1, \ldots, \omega_q$ of $N^*_{\F}$ which generate $N^*_{\F}$ at a general point of $X$ and which are flat for  Bott's partial connection satisfy Equation (\ref{E:dualtransverse}). 

We close this section by pointing out that Bott's partial connection on the normal/conormal of a foliation induces naturally partial connections of the wedge powers of the normal/conormal and also on their saturations.

\section{Foliations containing a given foliation}\label{Section:invariant}
Throughout this section, $X$ will denote a fixed projective manifold and  $\G$ will denote a codimension $q\ge 2$ foliation on $X$.

\subsection{Distributions containing a given foliation}
Let $\mathcal D$ be a distribution (or foliation) on $X$. We will say that $\G$ is contained in $\mathcal D$, or that $\mathcal D$ contains $\G$, if the following equivalent properties are satisfied:
\begin{enumerate}
    \item the tangent sheaf $T_{\mathcal D}$ of $\mathcal D$ contains the tangent sheaf $T_{\G}$ of $\G$; and
    \item the conormal sheaf $N^*_{\mathcal D}$ of $\mathcal D$ is contained in the conormal sheaf $N^*_{\G}$ of $\G$.
\end{enumerate}

Notice that for any $0<r<q$, the foliation $\G$ contains many codimension $r$ distributions: any non-zero product of $r$ rational sections of $N^*_{\G}$ defines a rational $r$-form whose kernel is the tangent sheaf of a codimension $r$ distribution containing $\G$.

\subsection{Invariant distributions} We now introduce a refined version of the previous definition, where we not only require that the conormal bundle of a given distribution is contained in the conormal bundle of $\G$, but also that it is invariant under Bott's partial connection. Specifically, we will say that a distribution $\mathcal{D}$ on $X$ is invariant by $\G$ if:
\begin{enumerate}
    \item the distribution $\mathcal D$ contains $\G$; and
    \item\label{I:invariance in conormal terms} Bott's partial connection on $N^*_{\G}$ maps $N^*_{\mathcal D}$ to $\Omega^1_{\G} (N^*_{\mathcal D}) \subset \Omega^1_{\G} (N^*_{\mathcal G}) $ .
\end{enumerate}

At this stage, it is important to mention that for a given foliation $\mathcal{G}$, the only invariant distributions might be $\mathcal{G}$ itself and the trivial distribution $\mathcal{D}$ with $T_{\mathcal{D}} = T_X$. In fact, as shown in \cite[Theorem 1]{MR2862041}, this property holds for very general one-dimensional foliations with sufficiently ample cotangent bundles on arbitrary projective manifolds.

For later reference, we record below a direct consequence of the definition of invariance.

\begin{lemma}\label{L:eta}
    If a distribution $\mathcal D$ of codimension $r$ defined by $\alpha \in H^0(X, \Omega^r_X \otimes \Mer_X)$ is invariant by
    $\mathcal G$ then there exists $\eta \in H^0(X, \Omega^1_{\G} \otimes \Mer_X)$ such that
    \[
        \nabla(\alpha) = \eta \otimes \alpha  \, ,
    \]
    where $\nabla$ is Bott's connection on $\wedge^r N^*_{\F}$.
\end{lemma}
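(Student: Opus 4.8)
The plan is to unwind the definitions and reduce the claim to a one-dimensional statement about the induced connection on the rank-one saturated subsheaf $\det N^*_{\mathcal D} \subset \wedge^r N^*_{\mathcal G}$. First I would observe that a codimension $r$ distribution $\mathcal D$ containing $\mathcal G$ is defined, locally at a general point, by a decomposable $r$-form $\alpha = \omega_1 \wedge \cdots \wedge \omega_r$, where $\omega_1, \ldots, \omega_r$ are rational sections of $N^*_{\mathcal G}$ generating $N^*_{\mathcal D}$; equivalently, $\alpha$ is a rational section of the line bundle $\det N^*_{\mathcal D}$, viewed inside $\wedge^r N^*_{\mathcal G}$ via the inclusion~(\ref{E:inclusion}). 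Bott's partial connection on $N^*_{\mathcal G}$ induces, as remarked at the close of Section~\ref{SS:Bott}, a partial connection $\nabla$ on $\wedge^r N^*_{\mathcal G}$, so $\nabla(\alpha)$ is a well-defined rational section of $\Omega^1_{\mathcal G} \otimes \wedge^r N^*_{\mathcal G}$.

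Next I would translate the invariance hypothesis. By condition~(\ref{I:invariance in conormal terms}) of the definition of invariance, $\nabla$ maps $N^*_{\mathcal D}$ into $\Omega^1_{\mathcal G}(N^*_{\mathcal D})$; by the Leibniz rule this is inherited by the induced connection on the exterior power, so $\nabla$ preserves the saturated rank-one subsheaf $\det N^*_{\mathcal D} \subset \wedge^r N^*_{\mathcal G}$. Concretely, writing $\alpha = \omega_1 \wedge \cdots \wedge \omega_r$ and applying the Leibniz rule,
\[
    \nabla(\alpha) = \sum_{i=1}^r \omega_1 \wedge \cdots \wedge \nabla(\omega_i) \wedge \cdots \wedge \omega_r \, ,
\]
and since each $\nabla(\omega_i)$ lies in $\Omega^1_{\mathcal G}(N^*_{\mathcal D})$, i.e. is an $\Omega^1_{\mathcal G}$-combination of $\omega_1, \ldots, \omega_r$, every term on the right is an $\Omega^1_{\mathcal G}$-multiple of $\omega_1 \wedge \cdots \wedge \omega_r = \alpha$; the terms where two equal $\omega_j$ appear vanish. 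Collecting coefficients, $\nabla(\alpha) = \eta \otimes \alpha$ for a rational section $\eta$ of $\Omega^1_{\mathcal G}$, precisely the asserted form. This is simply the standard fact that a flat-up-to-scale, i.e. invariant, rank-one subobject has a connection of scalar type.

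To make this rigorous at the level of sheaves rather than bases, I would phrase invariance as the statement that the composite of $\nabla|_{\det N^*_{\mathcal D}}$ with the quotient projection $\Omega^1_{\mathcal G}(\wedge^r N^*_{\mathcal G}) \to \Omega^1_{\mathcal G}(\wedge^r N^*_{\mathcal G} / \det N^*_{\mathcal D})$ vanishes; equivalently $\nabla$ descends to a connection on the line bundle $\det N^*_{\mathcal D}$, and a partial connection on a rank-one sheaf is exactly given by tensoring with a fixed $\eta \in H^0(X, \Omega^1_{\mathcal G} \otimes \Mer_X)$ against any local generator. Taking $\alpha$ to be the chosen global rational generator of $\det N^*_{\mathcal D}$ produces the stated $\eta$.

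The main obstacle, though it is minor, is the bookkeeping needed to pass from the invariance condition as stated on $N^*_{\mathcal D}$ to the induced invariance of the rank-one sheaf $\det N^*_{\mathcal D}$, and to check that $\eta$ is genuinely a rational (global, over a Zariski open) section of $\Omega^1_{\mathcal G}$ rather than merely a local one. This is handled by noting that $\det N^*_{\mathcal D}$ is a line bundle, so the coefficient $\eta$ defined locally by $\nabla(\alpha) = \eta \otimes \alpha$ is independent of the local generator up to the contribution $\restr{d\log f}{T_{\mathcal G}}$ when rescaling $\alpha$ by a rational function $f$; choosing the single global rational section $\alpha$ of $\det N^*_{\mathcal D}$ furnished by the inclusion~(\ref{E:inclusion}) pins $\eta$ down as a global rational section, completing the proof.
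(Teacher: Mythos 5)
Your argument is correct and is exactly the computation the paper has in mind: the lemma is stated there without proof as a ``direct consequence of the definition of invariance,'' and your unwinding --- write $\alpha = \omega_1 \wedge \cdots \wedge \omega_r$ with the $\omega_i$ generating $N^*_{\mathcal D}$, apply the Leibniz rule on $\wedge^r N^*_{\G}$, and use invariance to kill the off-diagonal terms so that $\eta$ is the trace of the connection matrix --- is precisely that consequence. The only cosmetic remark is that the $\wedge^r N^*_{\F}$ in the statement is a typo for $\wedge^r N^*_{\G}$, which you have correctly interpreted.
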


Observe that the $\eta\in H^0(X, \Omega^1_{\G} \otimes \Mer_X)$ given by Lemma \ref{L:eta} is not intrinsically attached to the invariant distribution $\mathcal D$. It depends on the choice of  $\alpha$. A different choice, necessarily of the form $\alpha'=f \alpha$ for some $f \in \C(X)$, will satisfy $\nabla(\alpha') = \eta'\otimes \alpha'$ with
\[
    \eta' =  \eta + \restr{\frac{df}{f}}{T_{\G}} \, ,
\]
thanks to the Leibniz's rule.

\begin{prop}\label{P:G-invariance}
    If $\F$ is a foliation containing $\G$ then $\F$ is invariant by $\G$.
\end{prop}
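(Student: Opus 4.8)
The plan is to verify the invariance condition (\ref{I:invariance in conormal terms}) by computing Bott's partial connection on $N^*_\G$ explicitly at a general point of $X$ and showing that the subsheaf $N^*_\F$ is preserved precisely because $\F$ is integrable. First I would work over the Zariski open set $U = X - (\sing(\G)\cup\sing(\F))$, where both conormal sheaves are locally free. Since $N^*_\F$ is saturated inside $\Omega^1_X$ and $N^*_\F \subset N^*_\G$, the quotient $N^*_\G/N^*_\F$ embeds in the torsion-free sheaf $\Omega^1_X/N^*_\F$, so $N^*_\F$ is also saturated in $N^*_\G$. Hence, shrinking $U$ if necessary, I may choose a local frame $\omega_1,\ldots,\omega_q$ of $N^*_\G$ whose first $r=\codim\F$ members $\omega_1,\ldots,\omega_r$ form a frame of $N^*_\F$, together with a frame $v_1,\ldots,v_{n-q}$ of $T_\G$ and lifts $\hat\xi_1,\ldots,\hat\xi_q$ of a frame $\xi_1,\ldots,\xi_q$ of $N_\G$ dual to the $\omega_j$, so that $\omega_i(\hat\xi_j)=\delta_{ij}$ and $\omega_i(v_k)=0$.

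Next I would record the explicit description of Bott's partial connection on the conormal. Dualizing the defining formula $\nabla_v\xi_j=[v,\hat\xi_j]\bmod T_\G$ via the product rule and applying Cartan's formula, using that $\omega_i(\hat\xi_j)=\delta_{ij}$ is locally constant and that $\omega_i(v)=0$ for $v\in T_\G$, one obtains $\langle\nabla_v\omega_i,\xi_j\rangle = d\omega_i(v,\hat\xi_j)$, and therefore
\[
    \nabla_v\omega_i = \sum_{j=1}^q d\omega_i(v,\hat\xi_j)\,\omega_j
\]
for every $v\in T_\G$.

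The key step is to feed the integrability of $\F$ into this formula. Because $\F$ is a foliation, de Rham--Saito division (equivalently, Frobenius integrability applied to $N^*_\F$) yields, for each $i\le r$, local one-forms $\theta_{ik}$ with
\[
    d\omega_i = \sum_{k=1}^r \theta_{ik}\wedge\omega_k \, .
\]
Evaluating on $(v,\hat\xi_j)$ with $v\in T_\G$ and using $\omega_k(v)=0$ and $\omega_k(\hat\xi_j)=\delta_{kj}$, every coefficient $d\omega_i(v,\hat\xi_j)$ with $j>r$ vanishes, whence $\nabla_v\omega_i=\sum_{j=1}^r\theta_{ij}(v)\,\omega_j$ is a section of $N^*_\F$. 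This proves $\nabla(N^*_\F)\subset\Omega^1_\G(N^*_\F)$ over $U$, and since $\Omega^1_\G(N^*_\F)$ is saturated in $\Omega^1_\G(N^*_\G)$ over the smooth locus, the containment on the dense open set $U$ promotes to the required inclusion of sheaves.

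The main obstacle, and the conceptual heart of the proposition, lies entirely in the displayed integrability identity: for a mere distribution $\mathcal D\supset\G$ the differential $d\omega_i$ would in general acquire components $\theta\wedge\omega_j$ with $j>r$, producing a nonzero $\omega_j$-component in $\nabla_v\omega_i$ and breaking invariance. Thus the statement is really the assertion that the integrability of $\F$ is exactly what forces Bott's connection on $N^*_\G$ to preserve $N^*_\F$; the surrounding sheaf-theoretic bookkeeping—saturation and the passage from $U$ to $X$—is routine.
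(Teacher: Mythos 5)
Your proof is correct and follows essentially the same route as the paper's: both arguments hinge on the Frobenius integrability of $N^*_{\F}$ giving $d\omega_i = \sum_{k\le r}\theta_{ik}\wedge\omega_k$, from which Bott's partial connection on $N^*_{\G}$ visibly sends sections of $N^*_{\F}$ into $\Omega^1_{\G}(N^*_{\F})$. The paper phrases this directly with rational sections and the formula $\nabla(\omega)=\sum_i \restr{\eta_i}{T_{\G}}\otimes\omega_i$, whereas you carry out the same computation more explicitly via adapted dual frames; the extra bookkeeping on saturation and passage from the open set is harmless but not needed in the rational-section formulation.
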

\begin{proof}
    Assume that the codimension of $\F$ is $r$ and let $\omega_1, \ldots, \omega_r$ be rational sections of $N^*_{\F}$ such that $\omega_1 \wedge \cdots \wedge \omega_r \neq 0$. If  $\omega = \sum_{i=1}^r a_i \omega_i$ is any rational $1$-form vanishing on $T_{\F}$ then the integrability of $N^*_{\F}$ implies that
    \[
        d \omega = \sum_{i=1}^r \eta_i \wedge \omega_i \, ,
    \]
    for suitable rational $1$-forms $\eta_i$. It follows that Bott's connection for $\G$ on $N^*_{\G}$ applied to $\omega$  is equal to
    \[
        \nabla(\omega) = \sum_{i=1}^r \restr{\eta_i}{T_{\G}} \otimes \omega_i \, .
    \]
    The $\G$-invariance of $\F$ follows.
\end{proof}

In general, distributions invariant under a given foliation are not necessarily involutive, meaning they do not always define foliations. The following result presents a simple yet important exception to this principle. 

\begin{lemma}\label{L:invariant vs integrable}
    If $\mathcal D$ is a distribution invariant by a foliation $\G$ and $\dim \mathcal D = \dim \G + 1$ then $\mathcal D$ is a foliation.
\end{lemma}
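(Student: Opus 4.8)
The plan is to verify the integrability condition (\ref{I:integrable}) for $\mathcal D$ directly, using the fact that $\mathcal D$ has codimension exactly $q-1$ (since $\dim \mathcal D = \dim \G + 1$ and $\codim \G = q$). Concretely, I would work at a general point where everything is locally free and choose rational sections $\omega_1, \ldots, \omega_q$ of $N^*_{\G}$ generating $N^*_{\G}$ at a general point, arranged so that $N^*_{\mathcal D}$ is generated by $\omega_1, \ldots, \omega_{q-1}$. The distribution $\mathcal D$ is then cut out by the $(q-1)$-form $\alpha = \omega_1 \wedge \cdots \wedge \omega_{q-1}$, and involutivity of $\mathcal D$ is equivalent to the Frobenius-type condition $d\omega_i \wedge \omega_1 \wedge \cdots \wedge \omega_{q-1} = 0$ for each $i \le q-1$.

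\medskip

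The key input is the $\G$-invariance of $\mathcal D$, which I would feed in through Lemma \ref{L:eta}: there exists $\eta \in H^0(X, \Omega^1_{\G}\otimes \Mer_X)$ with $\nabla(\alpha) = \eta \otimes \alpha$, where $\nabla$ is Bott's connection on $\wedge^{q-1} N^*_{\G}$. First I would unpack what invariance says at the level of the individual $\omega_i$. Since $\G$ is a foliation, integrability of $N^*_{\G}$ gives $d\omega_i = \sum_{j=1}^q \eta_{ij}\wedge \omega_j$ for suitable $1$-forms $\eta_{ij}$, and the invariance condition (\ref{I:invariance in conormal terms}) forces Bott's connection to preserve the subsheaf $N^*_{\mathcal D} = \langle \omega_1, \ldots, \omega_{q-1}\rangle$. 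Restricting to $T_{\G}$, this means $\restr{\eta_{iq}}{T_{\G}} = 0$ for every $i \le q-1$; equivalently $\eta_{iq}$ is a combination of $\omega_1, \ldots, \omega_q$ for those $i$. The point is that the "dangerous" term $\eta_{iq}\wedge\omega_q$ in $d\omega_i$ is controlled: $\eta_{iq}$ lies in the span of $\omega_1,\dots,\omega_q$, so $d\omega_i$ is congruent, modulo the ideal generated by $\omega_1,\dots,\omega_{q-1}$, to a multiple of $\omega_q \wedge \omega_q = 0$.

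\medskip

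Then I would carry out the wedge computation: for each $i \le q-1$,
\[
    d\omega_i \wedge \omega_1 \wedge \cdots \wedge \omega_{q-1}
    = \sum_{j=1}^q (\eta_{ij}\wedge \omega_j) \wedge \omega_1 \wedge \cdots \wedge \omega_{q-1}.
\]
Every term with $j \le q-1$ vanishes because $\omega_j$ is repeated. The only surviving term is $j=q$, namely $\eta_{iq}\wedge \omega_q \wedge \omega_1 \wedge \cdots \wedge \omega_{q-1}$, and this vanishes precisely because the invariance condition forces $\eta_{iq} \in \langle \omega_1, \ldots, \omega_q\rangle$ modulo $N^*_{\G}$-killing terms, so that $\eta_{iq}\wedge \omega_1 \wedge \cdots \wedge \omega_q = 0$. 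Hence $d\omega_i \wedge \omega_1 \wedge \cdots \wedge \omega_{q-1}=0$ for all $i \le q-1$, which is exactly the integrability condition (\ref{I:integrable}) for $\mathcal D$. Since this holds at a general point and all sheaves involved are reflexive and saturated, the conclusion extends to $\mathcal D$ globally, so $\mathcal D$ is a foliation.

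\medskip

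The main obstacle I anticipate is the bookkeeping in translating the abstract invariance condition of Lemma \ref{L:eta} (stated for $\alpha = \omega_1\wedge\cdots\wedge\omega_{q-1}$ and $\nabla$ on $\wedge^{q-1}N^*_{\G}$) into the concrete statement that the single connection form $\restr{\eta_{iq}}{T_{\G}}$ vanishes for each $i \le q-1$. I would handle this by expanding $\nabla(\alpha)$ via the Leibniz rule for the induced connection on the exterior power, $\nabla(\alpha) = \sum_{i=1}^{q-1} \omega_1 \wedge \cdots \wedge \nabla(\omega_i) \wedge \cdots \wedge \omega_{q-1}$, substituting $\nabla(\omega_i) = \sum_j \restr{\eta_{ij}}{T_{\G}}\otimes\omega_j$, and comparing with $\eta\otimes\alpha$; the coefficient of the "extra" direction $\omega_q$ must vanish, which yields exactly $\restr{\eta_{iq}}{T_{\G}} = 0$. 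The crucial feature making the whole argument work is that the codimension of $\mathcal D$ is one less than that of $\G$, so there is a single complementary direction $\omega_q$ to control, and $\omega_q \wedge \omega_q = 0$ kills the only obstruction to involutivity.
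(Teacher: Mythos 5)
Your proposal is correct and follows essentially the same route as the paper's proof: choose $\omega_1,\ldots,\omega_q$ generating $N^*_{\G}$ with $\omega_1,\ldots,\omega_{q-1}$ generating $N^*_{\mathcal D}$, write $d\omega_i=\sum_j\eta_{ij}\wedge\omega_j$, use invariance to deduce $\restr{\eta_{iq}}{T_{\G}}=0$ so that $\eta_{iq}$ is itself a rational section of $N^*_{\G}$, and conclude that the lone complementary term $\eta_{iq}\wedge\omega_q$ lies in the ideal generated by $\omega_1,\ldots,\omega_{q-1}$. The only cosmetic differences are that the paper invokes the invariance condition directly on each $\omega_i$ rather than unpacking $\nabla(\alpha)=\eta\otimes\alpha$ from Lemma \ref{L:eta}, and states involutivity via the rewritten expression for $d\omega_k$ rather than the equivalent Frobenius wedge identity.
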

\begin{proof}
    Let $\omega_1, \ldots, \omega_q$ be a basis of $\C(X)$-vector space $H^0(X, N^*_{\G}  \otimes \Mer_{X})$. 
    Since $\mathcal D$ contains the foliation $\G$, by the definition of invariance, and is such that $\mathcal \dim \mathcal D = \dim \G +1$, we can assume without loss of generality that $\omega_1, \ldots, \omega_{q-1}$ form a basis of the
    $\C(X)$-vector space $H^0(X, N^*_{\mathcal D} \otimes \Mer_{X})$. The integrability of $\G$ implies that we can write
    \[
        d \omega_k = \sum_{i=1}^q  \eta_{ki} \wedge \omega_i \, ,
    \]
    for suitable rational $1$-forms $\eta_{ki}$. Hence, if $\nabla$ is Bott's partial connection on $N^*_{\G}$, we can write
    \[
        \nabla(\omega_k) = \sum_{i=1}^q  \restr{\eta_{ki}}{T_{\G}} \otimes \omega_i \, ,
    \]
    The $\G$-invariance of $\mathcal D$ translates into $\restr{\eta_{kq}}{T_{\G}} =0$ for every $k \in \{ 1, \ldots, q-1\}$.
    Hence, in this range, $\eta_{kq}$ belongs to $H^0(X, N^*_{\G} \otimes \Mer_{X})$. It follows that, for $k \in \{1,\ldots, q-1\}$ we can write
    \[
        d \omega_k = \sum_{i=1}^{q-1}  \tilde{\eta}_{ki} \wedge \omega_i \, ,
    \]
    for suitable rational $1$-forms $\eta_{ki}$. It follows that $\mathcal D$ is a foliation as wanted.
\end{proof}

\subsection{Eigenpaces for Bott's connection}
Let $\G$ be a codimension $q$ foliation on a projective manifold $X$. For each integer $k$ between $1$ and $q$, and
each $\eta \in H^0(X, \Omega^1_{\G} \otimes \Mer_X)$ we will write $\E_{\G}^k(\eta)$ for the $\C(X/\G)$-vector space
\[
     \left\{ \omega \in H^0(X, \wedge^k N^*_{\G} \otimes \Mer_X) \, ; \, \nabla(\omega) = \eta \otimes \omega \right\} \, ,
\]
where $\nabla$ denotes  Bott's connection on $\wedge^k N^*_{\G}$. We will say that $\omega \in \E_{\G}^k(\eta)$ is an eigenvector for
Bott's connection with eigenvalue $\eta$, and that $\E_{\G}^k(\eta)$ is the corresponding $\eta$-eigenspace.

Multiplication of elements of $\E_{\G}^k(\eta)$ by a rational function $f \in \C(X)$ distinct from $0$ defines an isomorphism
\[
    \E_{\G}^k(\eta) \simeq \E_{\G}^k\left( \eta + \restr{\frac{df}{f}}{T_{\G}} \right) \, .
\]
Motivated by this isomorphism, we will say that two elements $\eta$ and $\eta'$ of $H^0(X, \Omega^1_{\G} \otimes \Mer_X)$ are log-equivalent
when there exists $f \in \C(X)$ such that
\[
    \eta - \eta'  = \restr{\frac{df}{f}}{T_{\G}}  \, .
\]
We will also say that an element $\eta$ of $H^0(X, \Omega^1_{\G} \otimes \Mer_X)$  is log-exact when it is log-equivalent to zero.

\begin{lemma}\label{L:E independente}
    Let $\ell \ge 1$ be an integer, and $\eta_1, \ldots, \eta_\ell \in H^0(X ,\Omega^1_{\G} \otimes \Mer_X)$ be pairwise non-log-equivalent
    elements. Then, for any $r$ between $1$ and $q$,  the natural linear map
    \[
        \bigoplus_{i=1}^\ell \E_{\G}^r(\eta_i) \otimes_{\C(X/\G)} \C(X) \to  H^0(X, \wedge^r N^*_{\G} \otimes \Mer_X)
    \]
    is injective.
\end{lemma}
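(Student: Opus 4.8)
The plan is to reduce the statement to a concrete linear independence assertion and then run a \emph{minimal counterexample} argument driven by Bott's connection. For each $i$ fix a $\C(X/\G)$-basis $\omega_{i,1}, \ldots, \omega_{i,m_i}$ of $\E_{\G}^r(\eta_i)$. Since $\E_{\G}^r(\eta_i) \otimes_{\C(X/\G)} \C(X)$ is free over $\C(X)$ on the symbols $\omega_{i,j} \otimes 1$, and the natural map sends $\omega_{i,j} \otimes 1$ to $\omega_{i,j} \in H^0(X, \wedge^r N^*_{\G} \otimes \Mer_X)$, the injectivity to be proved is exactly the assertion that the family $\{\omega_{i,j}\}_{i,j}$ is $\C(X)$-linearly independent. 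So I would assume, for contradiction, a nontrivial relation $\sum_{i,j} f_{ij}\,\omega_{i,j} = 0$ with $f_{ij} \in \C(X)$, chosen with the least possible number of nonzero coefficients, and normalized so that a distinguished coefficient $f_{i_0 j_0}$ equals $1$.

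The engine of the argument is to apply $\nabla$ and then contract. Using Leibniz' rule together with $\nabla(\omega_{i,j}) = \eta_i \otimes \omega_{i,j}$, differentiating the relation gives
\[
    \sum_{i,j}\left(\restr{df_{ij}}{T_{\G}} + f_{ij}\,\eta_i\right) \otimes \omega_{i,j} = 0
\]
in $\Omega^1_\G \otimes \wedge^r N^*_\G \otimes \Mer_X$. Subtracting $\eta_{i_0}$ times the original relation and then contracting with an arbitrary rational vector field $v \in H^0(X, T_\G \otimes \Mer_X)$ produces, for each $v$, a new relation
\[
    \sum_{i,j}\Bigl(v(f_{ij}) + f_{ij}\,\langle \eta_i - \eta_{i_0}, v\rangle\Bigr)\,\omega_{i,j} = 0
\]
among the very same $\omega_{i,j}$. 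By construction the coefficient of $\omega_{i_0,j_0}$ vanishes, while every coefficient attached to an index with $f_{ij}=0$ stays $0$; hence this new relation is supported on a strictly smaller set of indices. Minimality then forces it to be the trivial relation --- all its coefficients vanish --- for every choice of $v$.

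Letting $v$ vary, the vanishing of the coefficients reads $\restr{df_{ij}}{T_{\G}} = f_{ij}(\eta_{i_0} - \eta_i)$ for every $(i,j)$ in the support. For $i \neq i_0$ and $f_{ij} \neq 0$ this says $\eta_{i_0} - \eta_i = \restr{\frac{df_{ij}}{f_{ij}}}{T_{\G}}$ is log-exact, i.e. $\eta_i$ is log-equivalent to $\eta_{i_0}$, contradicting the hypothesis; so only indices with $i = i_0$ survive. For those the relation gives $\restr{df_{i_0 j}}{T_{\G}} = 0$, i.e. $f_{i_0 j} \in \C(X/\G)$, turning the surviving relation into a $\C(X/\G)$-linear dependence among the basis vectors $\omega_{i_0,j}$ of $\E_{\G}^r(\eta_{i_0})$ --- forcing all of them to vanish and contradicting $f_{i_0 j_0} = 1$.

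The step I expect to be the crux is the passage from the differentiated identity, which lives in $\Omega^1_\G \otimes \wedge^r N^*_\G \otimes \Mer_X$ and cannot be split coefficientwise since the $\omega_{i,j}$ are not yet known to be independent, back to a genuine relation among the $\omega_{i,j}$. This is precisely what contracting with $v$ and invoking minimality accomplishes: it is minimality of the counterexample, rather than any independence we have yet to establish, that guarantees the contracted relation is coefficientwise trivial. A secondary subtlety is that the $f_{ij}$ are arbitrary rational functions, so each block $\sum_j f_{ij}\omega_{i,j}$ is not itself a Bott eigenvector; subtracting $\eta_{i_0}$ times the original relation is the device that isolates the eigenvalue difference $\eta_i - \eta_{i_0}$ and thereby reintroduces the non-log-equivalence hypothesis.
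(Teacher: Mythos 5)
Your proof is correct and follows essentially the same route as the paper's: take a minimal nontrivial $\C(X)$-linear relation among eigenvectors, apply Bott's connection, subtract the distinguished eigenvalue, and conclude that any surviving cross-term coefficient exhibits a log-equivalence $\eta_i \sim \eta_{i_0}$, while same-block coefficients land in $\C(X/\G)$ and contradict the choice of basis. Your explicit contraction with rational sections $v$ of $T_{\G}$ merely spells out the step the paper performs implicitly when it splits the differentiated identity coefficientwise.
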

\begin{proof}
    Assume the natural linear map is not injective. Then there exists elements $\omega_1, \ldots, \omega_k, \omega_{k+1}$ in
    $\bigoplus_{i=1}^\ell \E_{\G}^r(\eta_i)$ such that $\omega_1, \ldots, \omega_k$ are $\C(X)$-linearly independent and
    \begin{equation}\label{E:relacao}
        \omega_{k+1} = \sum_{i=1}^k a_i \omega_i
    \end{equation}
    for suitable rational functions $a_i \in \C(X)$, not all of them in $\C(X/\G)$.
    Moreover, we can further assume the existence of a map $\varphi: \{ 1, \ldots, k\} \to \{ 1, \ldots, \ell\}$ that for each  $\omega_i$, there exists $\eta_{\varphi(i)}$ such that $\nabla(\omega_i) = \eta_{\varphi(i)} \otimes \omega_i$.
    Applying $\nabla$ to Equation (\ref{E:relacao}) and subtracting from the result $\eta_{\varphi(k+1)} \otimes (\sum_{i=1}^k a_i \omega_i)$, we get the identity
    \[
        0 = \sum_{i=1}^k \left( \restr{da_i}{T_{\G}} + a_i ( \eta_{\varphi(i)} - \eta_{\varphi(k+1)}) \right) \otimes \omega_i \, .
    \]
    The $\C(X)$-linear independence of $\omega_1, \ldots, \omega_k$ implies that
    \[
        \restr{da_i}{T_{\G}} + a_i \left( \eta_{\varphi(i)} - \eta_{\varphi(k+1)} \right) = 0
    \]
    for every $i \in \{ 1, \ldots, k\}$. Since some $a_j$ does not belong to $\C(X/\G)$, we get that for this same $j$ that $ \eta_{\varphi(j)}$ and $\eta_{\varphi(k+1)}$ are not equal, but log-equivalent to each other, contradicting to our assumptions. The lemma follows.
\end{proof}

\begin{cor}\label{C:q+1}
    Let $S = \{\mathcal F_1, \ldots, \mathcal F_{q+1}\}$ be a set of $q+1$ distinct singular codimension one foliations on a projective manifold
    $X$ of dimension $n > q$. Assume that the intersection of any $q$ foliations in the set $S$ is the same codimension $q$ foliation $\mathcal G$.
    Then there exists $\eta \in H^0(X, \Omega^1_{\G}\otimes \Mer_X)$ such that
    \[
     \E_{\G}^1(\eta) \otimes_{\C(X/\G)} \C(X) \simeq  H^0(X,  N^*_{\G} \otimes \Mer_X) \, .
    \]
\end{cor}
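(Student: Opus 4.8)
The plan is to produce, for each foliation $\F_i \in S$, a rational $1$-form $\omega_i$ defining it that is an eigenvector for Bott's connection, and then to force all the corresponding eigenvalues into a single log-equivalence class.

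First I would note that each $\F_i$ contains $\G$: since $\G$ is the intersection of the remaining $q$ foliations, $T_{\G} \subset T_{\F_i}$. By Proposition \ref{P:G-invariance}, $\F_i$ is invariant by $\G$, so Lemma \ref{L:eta} applied with $r=1$ supplies a $1$-form $\omega_i \in H^0(X, N^*_{\G} \otimes \Mer_X)$ defining $\F_i$ together with an element $\eta_i \in H^0(X, \Omega^1_{\G}\otimes \Mer_X)$ such that $\nabla(\omega_i) = \eta_i \otimes \omega_i$; that is, $\omega_i \in \E_{\G}^1(\eta_i)$. Since the intersection of any $q$ foliations in $S$ is $\G$ and $\codim \G = q$, any $q$ of the forms $\omega_1, \ldots, \omega_{q+1}$ generate $N^*_{\G}$ at a general point, hence span the $q$-dimensional $\C(X)$-vector space $H^0(X, N^*_{\G}\otimes \Mer_X)$; in particular every $q$-subset of them is $\C(X)$-linearly independent. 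As $q+1$ vectors in a $q$-dimensional space they satisfy a nontrivial relation $\sum_{i=1}^{q+1} a_i \omega_i = 0$, and the independence of every $q$-subset forces every $a_i$ to be nonzero.

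Next I would show that all the $\eta_i$ are log-equivalent. Suppose not, and partition $\{1, \ldots, q+1\}$ into the log-equivalence classes $C_1, \ldots, C_\ell$ of the $\eta_i$, with $\ell \ge 2$; let $\eta_{(m)}$ denote a representative eigenvalue for the class $C_m$. Using the isomorphism $\E_{\G}^1(\eta_i) \simeq \E_{\G}^1(\eta_{(m)})$ attached to log-equivalence, multiplication of each $\omega_i$ by a suitable rational function places $a_i \omega_i$ in the image of $\E_{\G}^1(\eta_{(m)}) \otimes_{\C(X/\G)} \C(X)$ whenever $i \in C_m$. Grouping the relation $\sum_i a_i \omega_i = 0$ according to classes and invoking the injectivity statement of Lemma \ref{L:E independente} for the pairwise non-log-equivalent representatives $\eta_{(1)}, \ldots, \eta_{(\ell)}$, I obtain $\sum_{i \in C_m} a_i \omega_i = 0$ for every $m$. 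Because $\ell \ge 2$, each class satisfies $|C_m| \le q$, so this is a vanishing $\C(X)$-combination of at most $q$ of the $\omega_i$ with nonzero coefficients, contradicting the linear independence established above. Hence $\ell = 1$.

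Finally, set $\eta := \eta_1$. Since every $\eta_i$ is log-equivalent to $\eta$, after rescaling I may assume $\omega_1, \ldots, \omega_{q+1} \in \E_{\G}^1(\eta)$. Any $q$ of them still generate $H^0(X, N^*_{\G}\otimes \Mer_X)$ over $\C(X)$, so the natural map $\E_{\G}^1(\eta)\otimes_{\C(X/\G)} \C(X) \to H^0(X, N^*_{\G}\otimes \Mer_X)$ is surjective; its injectivity is precisely the case $\ell=1$ of Lemma \ref{L:E independente}. The map is therefore an isomorphism, which is the desired conclusion. I expect the main obstacle to be the bookkeeping in the log-equivalence step: one must track the rescalings that move each $\omega_i$ into the eigenspace of its class representative and verify that the hypotheses of Lemma \ref{L:E independente} genuinely apply, so that a short intra-class relation is produced and the cardinality bound $|C_m|\le q$ can be exploited against the independence of $q$-subsets.
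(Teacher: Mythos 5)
Your argument is correct and follows essentially the same route as the paper: represent each $\F_i$ by an eigenvector $\omega_i\in\E^1_{\G}(\eta_i)$, note that the $q+1$ forms satisfy a $\C(X)$-dependence relation with all coefficients nonzero while every $q$-subset is independent, and use Lemma \ref{L:E independente} to force all the $\eta_i$ into a single log-equivalence class (the paper normalizes the coefficients to $1$ and reruns the lemma's computation, whereas you invoke the lemma as stated on the equivalence classes --- an immaterial difference). One small slip to fix: to see that $\F_i$ contains $\G$ you should intersect a $q$-element subset of $S$ that \emph{includes} $\F_i$, rather than the remaining $q$ foliations.
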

\begin{proof}
    We can represent each $\F_i$ by a 1-form $\omega_i\in H^0(X, \Omega^1_{\G}\otimes \Mer_X)$. By integrability and Proposition \ref{P:G-invariance}, there exists $\eta_i \in H^0(X, \Omega^1_{\G}\otimes \Mer_X)$ such that $\omega_i\in \E^1_\G(\eta_i)$. Since the codimension of $\G$ is less than $q+1$, the $1$-forms $\omega_1,\ldots,\omega_{q+1}$ are linearly dependent over $\C(X)$. Therefore, there exist rational functions $f_i$ such that
    \[
    f_{q+1}\omega_{i+1} = \sum_{i=1}^q f_i\omega_i.
    \]
    If any of these functions were identically zero, we would obtain a dependence relation between $q$ or less $1$-forms. But this contradicts the assumption on the codimension of the intersection $q$ foliations in $S$. Moreover, as $f_i\omega_i$ also defines the foliation $\F_i$, we can assume, without loss of generality, that the $f_i$ are identically $1$.

    Then, repeating the arguments of the proof of Lemma \ref{L:E independente} for the relation
    \[
    \omega_{q+1} = \omega_1+\ldots+\omega_q,
    \]
    we conclude that there exists an $\eta$ such that $\omega_i \in \E^1_\G(\eta)$ for each $i\leq q+1$.
\end{proof}

\subsection{Foliations defined by eigenspaces} In this subsection, we will consider foliations $\G$ for which
the $\C(X)$-vector space $ H^0(X,  N^*_{\G} \otimes \Mer_X)$ is generated by one eigenspace for Bott's connection.
More explicitly, given  $\eta \in H^0(X, \Omega^1_{\G} \otimes \Mer_X)$, we will say that $\E_{\G}^1(\eta)$ defines, or generates,  $\G$  if
\[
    \E_{\G}^1(\eta) \otimes_{\C(X/\G)} \C(X) \simeq  H^0(X,  N^*_{\G} \otimes \Mer_X) \, .
\]
In this case, we will say that $\G$ is defined/generated by $\E_{\G}^1(\eta)$.

For later use, we point out below a straightforward consequence of Lemma \ref{L:E independente}

\begin{lemma}\label{L:unique generation}
    If $\E_{\G}^1(\eta)$ defines $\G$ then 
    \[
        \E_{\G}^k(k \eta) \otimes_{\C(X/\G)} \C(X) \simeq H^0(X, \wedge^k N^*_{\G} \otimes \Mer_X) \, .
    \]
    Moreover, if $\E_{\G}^k(\eta') \neq 0$ then $k\eta'$ is log-equivalent to $k \eta$.
\end{lemma}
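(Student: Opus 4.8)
The plan is to reduce everything to a single convenient basis coming from level one. Since $\E_{\G}^1(\eta)$ defines $\G$, the natural map $\E_{\G}^1(\eta) \otimes_{\C(X/\G)} \C(X) \to H^0(X, N^*_{\G} \otimes \Mer_X)$ is an isomorphism; as the target is a $q$-dimensional $\C(X)$-vector space, the space $\E_{\G}^1(\eta)$ has dimension $q$ over $\C(X/\G)$. I would fix a $\C(X/\G)$-basis $\omega_1, \ldots, \omega_q$ of it, so that $\nabla(\omega_i) = \eta \otimes \omega_i$ for each $i$ and the $\omega_i$ form a $\C(X)$-basis of $H^0(X, N^*_{\G} \otimes \Mer_X)$.

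The key computation is the Bott connection on wedge products. The induced connection on $\wedge^k N^*_{\G}$ is a derivation for the wedge product, so for any multi-index $i_1 < \cdots < i_k$,
\[
    \nabla(\omega_{i_1} \wedge \cdots \wedge \omega_{i_k}) = \sum_{j=1}^k \omega_{i_1} \wedge \cdots \wedge \nabla(\omega_{i_j}) \wedge \cdots \wedge \omega_{i_k} = k\,\eta \otimes (\omega_{i_1} \wedge \cdots \wedge \omega_{i_k}),
\]
using $\nabla(\omega_{i_j}) = \eta \otimes \omega_{i_j}$ in each of the $k$ summands. Hence every such product lies in $\E_{\G}^k(k\eta)$. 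The $\binom{q}{k}$ products with $i_1 < \cdots < i_k$ form a $\C(X)$-basis of $H^0(X, \wedge^k N^*_{\G} \otimes \Mer_X)$ because the $\omega_i$ are a $\C(X)$-basis of $H^0(X, N^*_{\G} \otimes \Mer_X)$; this already shows the map $\E_{\G}^k(k\eta) \otimes_{\C(X/\G)} \C(X) \to H^0(X, \wedge^k N^*_{\G} \otimes \Mer_X)$ is surjective. Injectivity is precisely the $\ell = 1$, $r = k$ instance of Lemma \ref{L:E independente}. Combining the two gives the first assertion.

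For the second assertion, suppose $\E_{\G}^k(\eta') \neq 0$ and pick a nonzero $\alpha \in \E_{\G}^k(\eta')$, so $\nabla(\alpha) = \eta' \otimes \alpha$. By the first part the wedge products $\beta_1, \ldots, \beta_m$ ($m = \binom{q}{k}$) above form a $\C(X)$-basis of $H^0(X, \wedge^k N^*_{\G} \otimes \Mer_X)$ lying in $\E_{\G}^k(k\eta)$, so I would write $\alpha = \sum_j a_j \beta_j$ with $a_j \in \C(X)$. Applying $\nabla$, using the Leibniz rule together with $\nabla(\beta_j) = k\eta \otimes \beta_j$, and comparing with $\eta' \otimes \alpha$ yields
\[
    \sum_{j=1}^m \left( \restr{da_j}{T_{\G}} + a_j(k\eta - \eta') \right) \otimes \beta_j = 0 \, .
\]
Since the $\beta_j$ are $\C(X)$-linearly independent, each coefficient vanishes; choosing an index $j_0$ with $a_{j_0} \neq 0$ (one exists as $\alpha \neq 0$) gives $\eta' - k\eta = \restr{\frac{da_{j_0}}{a_{j_0}}}{T_{\G}}$, which is log-exact, so $\eta'$ is log-equivalent to $k\eta$, as claimed.

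The argument is mostly bookkeeping once the derivation property of the induced Bott connection (which produces the crucial factor $k$) is in hand. The one step requiring genuine care — and the step I expect to be the crux — is the passage from the single eigenvector relation to the coefficient-wise identities, i.e. justifying that $\C(X)$-linear independence of the $\beta_j$ forces each $\Omega^1_{\G} \otimes \Mer_X$-valued coefficient to vanish. This is exactly the mechanism underlying the proof of Lemma \ref{L:E independente}, so I would invoke that reasoning rather than reprove it.
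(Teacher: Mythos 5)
Your proof is correct and follows the paper's argument essentially verbatim: the derivation property of the induced connection on $\wedge^k N^*_{\G}$ applied to wedge products of a basis of $\E_{\G}^1(\eta)$ gives surjectivity, and the mechanism of Lemma \ref{L:E independente} gives injectivity together with the log-equivalence. One remark: your computation yields that $\eta'$ itself is log-equivalent to $k\eta$, rather than the literal ``$k\eta'$ is log-equivalent to $k\eta$'' of the statement; this is also what the paper's own proof produces and what the later applications (e.g.\ Proposition \ref{P:algebraically integrable}, where one must renormalize an eigenvector by a single rational function) actually require, so the ``$k\eta'$'' in the statement appears to be a typo and your version is the right one.
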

\begin{proof}
    Bott's connection on $N^*_{\G}$ induces a connection on $\wedge^k N^*_{\G}$. If $\omega_1, \ldots, \omega_k$ are
    sections of $N^*_{\G}$ satisfying $\nabla(\omega_i) = \eta \otimes \omega_i$, it follows that
    \[
        \nabla(\omega_1 \wedge \cdots \wedge \omega_k) = k \eta \otimes \omega_1 \wedge \cdots \wedge \omega_k \, .
    \]
    Since $E^1(\eta)$ generates the $\C(X)$-vector space $H^0(X, N^*_{\G} \otimes \Mer_X)$, we obtain a surjective linear
    map $\E_{\G}^k(k \eta) \otimes_{\C(X/\G)} \C(X) \to H^0(X, \wedge^k N^*_{\G} \otimes \Mer_X)$. The injectivity follows from
    Lemma \ref{L:E independente}, which also implies that  if $\E_{\G}^k(\eta') \neq 0$ then $k\eta'$ is log-equivalent to $k \eta$.
\end{proof}

\begin{lemma}\label{L:escritura}
    Let $\G$ be a codimension $q$ foliation on a projective manifold $X$ and let $\eta$ be an element of $H^0(X, \Omega^1_{\G} \otimes \Mer_X)$.
    If $\E_{\G}^1(\eta)$ defines $\G$,  $\widehat{\eta}$ is a rational $1$-form on $X$ such that
    \[
        \restr{\widehat{\eta}}{T_{\G}} = \eta\, ,
    \]
    and $\omega_1, \ldots, \omega_q$ is a basis of $\E_{\G}^1(\eta)$, then there exists a collection  $\{ c_{ij}^k\}  \subset \C(X)$ of rational functions with $i,j,k \in \{ 1, \ldots, q\}$, anti-symmetric on the lower indices, such that
    \[
        d \omega_k  = \sum_{i,j=1}^q \lambda_{ij}^k \cdot \omega_i \wedge \omega_j + \widehat{\eta} \wedge \omega_k  \, ,
    \]
    for every $k \in \{ 1, \ldots, q \}$.
\end{lemma}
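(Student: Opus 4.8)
The plan is to read off the claimed expansion by comparing two descriptions of Bott's partial connection applied to the $\omega_k$: the one furnished by the integrability of $\G$, and the eigenvalue condition encoding membership in $\E_{\G}^1(\eta)$.

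First I would note that, since $\E_{\G}^1(\eta)$ defines $\G$, the injective map of Lemma \ref{L:E independente} (taken with $\ell=r=1$) is in fact an isomorphism, so a $\C(X/\G)$-basis $\omega_1,\dots,\omega_q$ of $\E_{\G}^1(\eta)$ maps to a $\C(X)$-basis of $H^0(X,N^*_\G\otimes\Mer_X)$. In particular the $\omega_j$ are $\C(X)$-linearly independent and generate $N^*_\G$ at a general point. The integrability condition (\ref{I:integrable}) for $\G$ then produces rational $1$-forms $\eta_{kj}$ on $X$ with $d\omega_k=\sum_{j=1}^q\eta_{kj}\wedge\omega_j$, exactly as in the explicit description of Bott's connection on the conormal sheaf and in the proof of Proposition \ref{P:G-invariance}; by that same description $\nabla(\omega_k)=\sum_{j=1}^q\restr{\eta_{kj}}{T_\G}\otimes\omega_j$.

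The crucial step compares this with the hypothesis $\omega_k\in\E_{\G}^1(\eta)$, that is $\nabla(\omega_k)=\eta\otimes\omega_k=\restr{\widehat\eta}{T_\G}\otimes\omega_k$. Equating the two expressions for $\nabla(\omega_k)$ and using the $\C(X)$-linear independence of the $\omega_j$ to compare coefficients in the second tensor slot, I obtain $\restr{\eta_{kj}}{T_\G}=0$ for $j\neq k$ and $\restr{(\eta_{kk}-\widehat\eta)}{T_\G}=0$. Hence each $\eta_{kj}$ with $j\neq k$, as well as $\eta_{kk}-\widehat\eta$, annihilates $T_\G$ and therefore lies in $H^0(X,N^*_\G\otimes\Mer_X)$; expanding in the generating set gives $\eta_{kj}=\sum_i b_{kj}^i\omega_i$ for $j\neq k$ and $\eta_{kk}=\widehat\eta+\sum_i b_{kk}^i\omega_i$, with $b_{kj}^i\in\C(X)$.

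Substituting back yields $d\omega_k=\sum_{i,j}b_{kj}^i\,\omega_i\wedge\omega_j+\widehat\eta\wedge\omega_k$, the extra $\widehat\eta$ contributing only through the $j=k$ term. Finally I would set $\lambda_{ij}^k=\tfrac12(b_{kj}^i-b_{ki}^j)$, which is anti-symmetric in the lower indices and, after relabelling the summation indices, satisfies $\sum_{i,j}\lambda_{ij}^k\,\omega_i\wedge\omega_j=\sum_{i,j}b_{kj}^i\,\omega_i\wedge\omega_j$, giving the stated identity. I do not expect a genuine obstacle: the only points demanding care are the global rationality of the $\eta_{kj}$ and the legitimacy of comparing coefficients, both of which rest on the hypothesis that $\E_{\G}^1(\eta)$ generates $N^*_\G$ over $\C(X)$ together with Lemma \ref{L:E independente}.
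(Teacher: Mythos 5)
Your proposal is correct and follows essentially the same route as the paper: expand $d\omega_k=\sum_j\alpha_k^j\wedge\omega_j$ via integrability, use the eigenvalue condition $\nabla(\omega_k)=\eta\otimes\omega_k$ to conclude that the off-diagonal $\alpha_k^j$ and $\alpha_k^k-\widehat\eta$ vanish on $T_\G$ and hence lie in $H^0(X,N^*_\G\otimes\Mer_X)$, expand them in the basis, and anti-symmetrize the coefficients. The only (immaterial) differences are that you make explicit the appeal to Lemma \ref{L:E independente} for the $\C(X)$-linear independence of the $\omega_j$, and your normalization $\lambda_{ij}^k=\tfrac12(b_{kj}^i-b_{ki}^j)$ carries the factor $\tfrac12$ needed for the unrestricted double sum.
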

\begin{proof}
    Let $\omega_1, \ldots, \omega_q$ be a basis of the $\C(X/\G)$-vector space $\E_{\G}^1(\eta)$. Since $\E_{\G}^1(\eta)$ defines $\G$, we have that
    \[
        d \omega_k =  \sum_{j=1}^q \alpha_k^j \wedge \omega_j \, ,
    \]
    for suitable rational $1$-forms $\alpha_i^j$. Since $\nabla(\omega_k) = \eta \otimes \omega_k$, it follows that for $k \neq j$, $\restr{\alpha_k^j}{T_{\G}}=0$ and, consequently,  $\alpha_k^j$ is a rational section of $N^*_{\G}$. We can thus  write 
    \[
        \alpha_k^j = \sum_{i=1}^q a_{ij}^k \omega_i
    \]
    where $a_{ij}^k$ are suitable rational functions.  Likewise, $\alpha_k^k - \widehat{\eta}$ is also a rational section of $N^*_{\G}$  for every $k \in \{ 1, \ldots, q\}$, and we can write
    \[
        \alpha_k^k - \widehat{\eta} = \sum_{i=1}^q a_{ik}^k \omega_i \, .
    \]
    The result follows after setting $\lambda_{ij}^k  = a_{ij}^k - a_{ji}^k$.
\end{proof}

\begin{cor}\label{C:quasitransvLie}
    Let $\G$ be a codimension $q$ foliation on a projective manifold $X$. If $\G$ is defined by $\E_{\G}^1(0)$ then $\G$ is transversely parallelizable and the rational functions $\lambda_{ij}^k$ given by Lemma \ref{L:escritura} are rational first integrals for $\G$, \ie $\lambda_{ij}^k \in \C(X/\G)$.
\end{cor}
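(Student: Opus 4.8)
The plan is to recognize that the hypothesis places us squarely in the setting of Lemma~\ref{L:transitivetransverse}, once the optimal lift is chosen in Lemma~\ref{L:escritura}. First I would fix a basis $\omega_1, \ldots, \omega_q$ of the $\C(X/\G)$-vector space $\E_{\G}^1(0)$. Because $\G$ is defined by $\E_{\G}^1(0)$, the isomorphism $\E_{\G}^1(0) \otimes_{\C(X/\G)} \C(X) \simeq H^0(X, N^*_{\G} \otimes \Mer_X)$ makes $\omega_1, \ldots, \omega_q$ a $\C(X)$-basis of $H^0(X, N^*_{\G} \otimes \Mer_X)$; in particular $\omega_1 \wedge \cdots \wedge \omega_q \neq 0$, so these forms generate $N^*_{\G}$ at a general point of $X$, and by definition of $\E_{\G}^1(0)$ each is flat, $\nabla(\omega_k) = 0$. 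The crucial observation is that with $\eta = 0$ one may take the trivial lift $\widehat{\eta} = 0$ in Lemma~\ref{L:escritura}, which is legitimate precisely because $\restr{0}{T_{\G}} = 0 = \eta$. The term $\widehat{\eta} \wedge \omega_k$ then vanishes and the conclusion of Lemma~\ref{L:escritura} collapses to
\[
    d\omega_k = \sum_{i,j=1}^q \lambda_{ij}^k\, \omega_i \wedge \omega_j,
\]
with $\lambda_{ij}^k$ anti-symmetric in the lower indices; after the harmless rescaling by $-2$ this is exactly the structure equation~(\ref{E:dualtransverse}).

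Next I would conclude. The forms $\omega_1, \ldots, \omega_q$ generate $N^*_{\G}$ at a general point and satisfy~(\ref{E:dualtransverse}), so they are precisely the data certifying transverse parallelizability in the sense of Lemma~\ref{L:transitivetransverse}; invoking that lemma gives both that $\G$ is transversely parallelizable and, through its final clause, that the coefficients $\lambda_{ij}^k$ lie in $\C(X/\G)$. If one wants an argument internal to this corollary, the first-integral property can be re-derived as in the proof of Lemma~\ref{L:transitivetransverse}: wedging the displayed equation with $\omega_1 \wedge \cdots \wedge \widehat{\omega_i} \wedge \cdots \wedge \widehat{\omega_j} \wedge \cdots \wedge \omega_q$ produces $\pm \lambda_{ij}^k\, \omega_1 \wedge \cdots \wedge \omega_q$ on the right, and differentiating, using $d^2 = 0$ together with the vanishing of any wedge of $q+1$ of the rank-$q$ conormal forms, forces the differentiated left-hand side to vanish, leaving $d\lambda_{ij}^k \wedge \omega_1 \wedge \cdots \wedge \omega_q = 0$; this says $\restr{d\lambda_{ij}^k}{T_{\G}} = 0$, i.e. $\lambda_{ij}^k \in \C(X/\G)$.

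There is no genuine obstacle here; the only bookkeeping subtlety is that transverse parallelizability is phrased via flat rational sections of the \emph{normal} sheaf $N_{\G}$ (the space $\kerBott(X/\G)$), whereas the hypothesis concerns flat sections of the \emph{conormal} sheaf $N^*_{\G}$. This is bridged by the duality between the two Bott connections: the dual frame $\xi_1, \ldots, \xi_q$ of $N_{\G}$ of the flat frame $\omega_1, \ldots, \omega_q$ is itself flat, since $\langle \omega_i, \xi_j \rangle = \delta_{ij}$ is constant along $\G$ and $\nabla^*(\omega_i) = 0$, whence $\langle \omega_i, \nabla(\xi_j) \rangle = 0$ for all $i$ forces $\nabla(\xi_j) = 0$. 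These $q$ independent flat sections exhibit $\dim_{\C(X/\G)} \kerBott(X/\G) \ge q$, which meets the upper bound of Proposition~\ref{P:bracket} and confirms parallelizability. Thus the entire content of the corollary is the recognition that $\eta = 0$ permits the trivial lift, reducing Lemma~\ref{L:escritura} to the transverse parallelizability criterion.
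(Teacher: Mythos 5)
Your proposal is correct and follows essentially the same route as the paper, which proves the corollary by combining Lemma~\ref{L:escritura} (with $\eta=0$ and the trivial lift $\widehat{\eta}=0$) with Lemma~\ref{L:transitivetransverse}. The extra care you take with the rescaling by $-2$ and the normal/conormal duality is sound but not needed beyond what those two lemmas already provide.
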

\begin{proof}
    It suffices to combine Lemma \ref{L:escritura} with Lemma \ref{L:transitivetransverse}.
\end{proof}

\subsection{Eigenspace associated to a non log-exact eigenvalue}

\begin{prop}\label{P:integravel}
    Let $\G$ be a codimension $q$ foliation on a projective manifold $X$. Assume the existence of $\eta \in H^0(X, \Omega^1_{\G} \otimes \Mer_X)$ such that $\G$ is defined by $\E_{\G}^1(\eta)$ . If $\eta$ is not log-exact then every $\omega \in \E_{\G}^1(\eta)$ is integrable.
\end{prop}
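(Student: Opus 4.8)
The plan is to extract from a given $\omega \in \E^1_{\G}(\eta)$ the rational $3$-form $\Psi := d\omega \wedge \omega$, to note that integrability of the codimension one distribution defined by $\omega$ is literally the vanishing of $\Psi$, and then to identify $\Psi$ as an eigenvector of Bott's connection on $\wedge^3 N^*_{\G}$ whose eigenvalue is incompatible with $\eta$ being non-log-exact. First I would record the pointwise meaning of the eigenvector condition: from the explicit description of Bott's connection on $N^*_{\G}$, for any $\alpha \in N^*_{\G}$ and any local section $v$ of $T_{\G}$ one has $\nabla_v \alpha = i_v\, d\alpha$ (both sides annihilate $T_{\G}$, by involutivity of $\G$), and the same identity $\nabla_v = i_v \circ d$ holds on rational sections of every exterior power $\wedge^r N^*_{\G}$. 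Thus $\omega \in \E^1_{\G}(\eta)$ translates into $i_v\, d\omega = \eta(v)\, \omega$ for every $v \in T_{\G}$. Contracting $\Psi$ with such a $v$ gives $i_v \Psi = (i_v d\omega) \wedge \omega = \eta(v)\, \omega \wedge \omega = 0$, so $\Psi$ is a rational section of $\wedge^3 N^*_{\G}$, and proving the proposition amounts to showing $\Psi = 0$.

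Next I would compute the eigenvalue of $\Psi$. Since $d^2 = 0$ one has $d\Psi = d\omega \wedge d\omega$, whence for every $v \in T_{\G}$
\[
    \nabla_v \Psi = i_v\, d\Psi = 2\,(i_v d\omega) \wedge d\omega = 2\,\eta(v)\, \omega \wedge d\omega = 2\,\eta(v)\, \Psi \, ,
\]
using $\omega \wedge d\omega = d\omega \wedge \omega = \Psi$. Hence $\Psi \in \E^3_{\G}(2\eta)$. On the other hand, since $\E^1_{\G}(\eta)$ defines $\G$, Lemma \ref{L:unique generation} (with $k = 3$) gives
\[
    \E^3_{\G}(3\eta) \otimes_{\C(X/\G)} \C(X) \simeq H^0(X, \wedge^3 N^*_{\G} \otimes \Mer_X) \, ,
\]
so $\Psi$ also lies in the image of $\E^3_{\G}(3\eta) \otimes_{\C(X/\G)} \C(X)$.

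Finally I would invoke Lemma \ref{L:E independente}. The eigenvalues $2\eta$ and $3\eta$ differ by $\eta$, which is not log-exact by hypothesis; hence they are not log-equivalent, and Lemma \ref{L:E independente} (with $r = 3$ and $\ell = 2$) shows that the images of $\E^3_{\G}(2\eta) \otimes \C(X)$ and $\E^3_{\G}(3\eta) \otimes \C(X)$ in $H^0(X, \wedge^3 N^*_{\G} \otimes \Mer_X)$ meet only at $0$. Since $\Psi$ belongs to both, it follows that $\Psi = 0$, i.e. $\omega$ is integrable. When $q = 2$ the assertion is automatic, as $\wedge^3 N^*_{\G} = 0$ forces $\Psi = 0$.

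The computational heart, and the step I expect to be the main obstacle, is pinning down the eigenvalue of $\Psi$ exactly: because $d\omega$ both appears in $\Psi$ and reappears after differentiating, and because the sign bookkeeping in $i_v(d\omega \wedge d\omega)$ produces a factor $2$, the eigenvalue comes out to be $2\eta$ rather than the value $3\eta$ that sections of $\wedge^3 N^*_{\G}$ naturally carry. The gap between these two eigenvalues is precisely $\eta$, which is exactly why the hypothesis that $\eta$ be non-log-exact is the right condition to force the vanishing of $\Psi$; the remaining care is only in verifying the identity $\nabla_v = i_v \circ d$ on the exterior powers and extending it to rational sections.
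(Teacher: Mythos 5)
Your argument is correct, and it reaches the conclusion by a genuinely different (and arguably cleaner) route than the paper. The paper's proof fixes a basis $\omega_1,\ldots,\omega_q$ of $\E_{\G}^1(\eta)$, expands $d\omega_1=\sum_{i,j}c^1_{ij}\,\omega_i\wedge\omega_j+\widehat{\eta}\wedge\omega_1$ via Lemma \ref{L:escritura}, notes that non-integrability forces some coefficient $c^1_{ab}\neq 0$ with $a,b\ge 2$, and then extracts from $d^2\omega_1=0$, by wedging with a suitable $(q-2)$-form, the identity $\restr{\left(dc^1_{ab}/c^1_{ab}+\widehat{\eta}\right)}{T_{\G}}=0$, contradicting non-log-exactness. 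You instead package the whole obstruction into the single rational section $\Psi=d\omega\wedge\omega$ of $\wedge^3N^*_{\G}$, use the identity $\nabla_v=i_v\circ d$ to place $\Psi$ in $\E_{\G}^3(2\eta)$, and play this off against Lemma \ref{L:unique generation} (which forces every nonzero rational section of $\wedge^3N^*_{\G}$ to have eigenvalue log-equivalent to $3\eta$) and Lemma \ref{L:E independente} (which then kills $\Psi$, since $3\eta-2\eta=\eta$ is not log-exact). The underlying mechanism is the same --- $d^2\omega=0$ produces an eigenvalue discrepancy of exactly $\eta$ --- but your version replaces the structure-constant bookkeeping by the eigenspace formalism of Section \ref{Section:invariant}, treats all the coefficients $c^1_{ab}$ at once, and makes the role of the non-log-exactness hypothesis completely transparent; the paper's version is more elementary in that it only needs Lemma \ref{L:escritura}. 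In a final write-up you should spell out two routine points: that $i_v\circ d$ really is the connection on $\wedge^rN^*_{\G}$ induced by Bott's connection (Cartan's formula on decomposable sections plus the Leibniz rule, both sides landing in $\wedge^rN^*_{\G}$ by involutivity), and that a rational $3$-form annihilated by contraction with $T_{\G}$ is a rational section of $\wedge^3N^*_{\G}$; neither presents any difficulty.
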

\begin{proof}
    Aiming at a contradiction, assume there exists $\omega \in \E_{\G}^1(\eta)$ which is not integrable.
    Without  loss of generality, assume that  $\omega$ is one of the elements of a basis $\omega_1, \ldots, \omega_q$ for $\E_{\G}^1(\eta)$, say $ \omega = \omega_1$. Expanding the Frobenius integrability condition for  $\omega_1$ using Lemma \ref{L:escritura}, we obtain:
    \[
        d \omega_1 \wedge \omega_1  =  \sum_{i,j=2}^q \left( c_{ij}^1 \cdot  \omega_i \wedge \omega_j \right) \wedge \omega_1  \, .
    \]
    The non-integrability of $\omega_1$  implies the existence of a pair $(a,b) \in \{ 2, \ldots, q\}$ with $a < b$,  such that  $c_{ab}^1\neq 0$, where $c_{ij}^k$ are the rational functions given by the expression for $d\omega$ given by Lemma \ref{L:escritura}.

    Set $\Omega$   to be the rational $q$-form $\omega_1 \wedge \cdots \wedge \omega_q$ and let $\Omega_{a,b}$ be the $(q-2)$-form obtained by the wedge product of $\omega_i$ for $i\in \{ 1, \ldots ,q\} - \{ a ,b\}$ in such an order that
    \[
        \omega_a \wedge \omega_b \wedge \Omega_{a,b} = \Omega \, .
    \]
    Lemma \ref{L:escritura} implies the identity  
    \[ 
        0 = d^2 \omega_1 = d \left( \sum_{i,j=1}^q c_{ij}^1 \cdot \omega_i \wedge \omega_j + \widehat{\eta} \wedge \omega_1 \right) \, ,
    \]
    which turns out to be equivalent to
    \[
         0  =   \sum_{i,j=1}^q \left( \underbrace{d c_{ij}^1 \wedge \omega_i \wedge \omega_j}_{\bola{A}} + \underbrace{c_{ij}^1 \cdot  d \omega_i \wedge \omega_j}_{\bola{B}} -  \underbrace{c_{ij}^1 \cdot  \omega_i \wedge d\omega_j}_{\bola{C}}  \right) + \underbrace{d \widehat{\eta} \wedge \omega_1}_{\bola{D}} - \underbrace{\widehat{\eta} \wedge d \omega_1}_{\bola{E}}\, .
    \]
    Wedge this expression with $\Omega_{a,b}$ to obtain the vanishing of
    \[
           \left( \underbrace{( dc_{ab}^1 - dc_{ba}^1) }_{\bola{A}} + \underbrace{( c^1_{ab} - c^1_{ba}) \cdot  \widehat{\eta}}_{\bola{B}}
        - \underbrace{( c^1_{ba} - c^1_{ab} )\cdot   \widehat{\eta} }_{\bola{C}} +
          \underbrace{0}_{\bola{D}} - \underbrace{( c^1_{ab} - c^1_{ba}) \cdot  \widehat{\eta}}_{\bola{E}} \right) \wedge \Omega \, .
    \]
    Therefore, the identity 
    \[
        \restr{\left(dc_{ab}^1 + c_{a,b}^1 \widehat{\eta}\right)}{T_{\G}} = 0 \implies \restr{\left(\frac{dc_{ab}^1}{c_{a,b}^1} + \widehat{\eta}\right)}{T_{\G}} = 0
    \]
    holds true, showing that $\eta$ is log-exact. This contradicts our assumptions.
    The proposition follows.
\end{proof}

\begin{cor}\label{C:zariski dense}
    Let $\G$ be a codimension $q$ foliation satisfying the assumptions of Proposition \ref{P:integravel}. If $\eta$ is not log-exact and $q \ge 3$ then $\C(X/\G) = \C$.
\end{cor}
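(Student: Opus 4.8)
The plan is to combine Proposition~\ref{P:integravel} with the structure forced by Lemma~\ref{L:escritura} and the hypothesis $q\ge 3$. Assume $\E_{\G}^1(\eta)$ defines $\G$ with $\eta$ not log-exact. By Proposition~\ref{P:integravel}, every $\omega\in\E_{\G}^1(\eta)$ is integrable. The key consequence I would extract is that \emph{every} $1$-form in $\E_{\G}^1(\eta)$ is integrable, not merely the basis elements, and this is a very rigid constraint. Concretely, fix a basis $\omega_1,\ldots,\omega_q$ of $\E_{\G}^1(\eta)$ and write, as in Lemma~\ref{L:escritura},
\[
    d\omega_k = \sum_{i,j=1}^q \lambda_{ij}^k\,\omega_i\wedge\omega_j + \widehat{\eta}\wedge\omega_k \, .
\]
Since $\E_{\G}^1(\eta)$ is a $\C(X/\G)$-vector space, for any $c\in\C(X/\G)$ the combination $\omega_i + c\,\omega_j$ again lies in $\E_{\G}^1(\eta)$, hence is integrable; imposing integrability on all such combinations (using $q\ge 3$ so that there is room for three independent directions) should pin down the rational functions $\lambda_{ij}^k$.

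The main step is to show that integrability of the full family forces the $\lambda_{ij}^k$ to lie in $\C(X/\G)$, i.e. to be rational first integrals of $\G$. I would argue exactly as in the proof of Proposition~\ref{P:integravel}: applying $d^2\omega_k=0$ and wedging with the appropriate $(q-2)$-form $\Omega_{a,b}$ isolates an expression of the shape $\restr{(d\lambda_{ab}^k + \lambda_{ab}^k\,\widehat{\eta})}{T_{\G}}$. Because $\eta$ is assumed \emph{not} log-exact, the only way to avoid the contradiction obtained there is that the offending coefficient vanishes, and by running this over all admissible triples of indices (here is precisely where $q\ge 3$ is used, to have enough indices to form the wedge $\Omega_{a,b}$ and to vary the third index) one forces $\restr{d\lambda_{ij}^k}{T_{\G}}=0$ for all $i,j,k$, so that $\lambda_{ij}^k\in\C(X/\G)$.

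Once the structure constants are first integrals, I would deduce that $\G$ is transversely parallelizable in the sense of Lemma~\ref{L:transitivetransverse}. Indeed, after the twist by $\widehat\eta$ is accounted for, the forms $\omega_k$ satisfy a Maurer--Cartan type system with first-integral coefficients, which is the content of Corollary~\ref{C:quasitransvLie} in the log-exact case; the non-log-exact case should reduce to it after isolating the role of $\eta$. The upshot is a $q$-dimensional $\C(X/\G)$-Lie algebra $\kerBott(X/\G)$ of infinitesimal symmetries whose structure constants lie in $\C(X/\G)$. The final move is to observe that if $\C(X/\G)$ were strictly larger than $\C$, then the non-constant first integrals would interact with the twist $\widehat\eta$ in a way incompatible with $\eta$ being non-log-exact: differentiating the relations and using $q\ge 3$ again, any non-constant first integral would produce, through the freedom $\eta'=\eta+\restr{df/f}{T_\G}$ and Lemma~\ref{L:unique generation}, a log-equivalence collapsing $\eta$ to a log-exact form. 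This contradiction forces $\C(X/\G)=\C$.

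The hard part will be making the last step rigorous: translating ``non-constant first integrals exist'' into an explicit log-exactness of $\eta$. The cleanest route is probably to show that a non-constant $f\in\C(X/\G)$ yields a second eigenspace $\E_{\G}^1(\eta')$ with $\eta'$ log-equivalent but not equal to $\eta$ both generating $\G$, and then invoke Lemma~\ref{L:unique generation} (applied at level $k=1$, or at a suitable wedge power) to force $\eta$ itself to be log-exact, contradicting the hypothesis. I expect the bookkeeping of which index ranges are available—ensuring $q\ge 3$ genuinely supplies the extra index needed to separate the ``diagonal'' twist by $\widehat\eta$ from the ``off-diagonal'' structure constants—to be the delicate point, since for $q=2$ the argument must (and does) fail, matching the fact that Theorem~\ref{THM:pencil} allows non-constant first integrals.
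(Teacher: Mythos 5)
Your opening step (every element of $\E_{\G}^1(\eta)$ is integrable, by Proposition \ref{P:integravel}) matches the paper, but beyond that the argument has a genuine gap, concentrated precisely in the step you flag as the hard part. The mechanism you propose for converting a non-constant first integral into log-exactness of $\eta$ does not work: for $f \in \C(X/\G)$ one has $\restr{df}{T_{\G}} = 0$, so multiplication by a first integral does not move the eigenvalue at all ($f\,\E_{\G}^1(\eta) \subset \E_{\G}^1(\eta)$) and produces no new log-equivalence class; the ``freedom $\eta' = \eta + \restr{df/f}{T_{\G}}$'' is vacuous exactly for the functions you need it for. The intermediate detour is also unsupported: the $d^2\omega_k=0$ computation of Proposition \ref{P:integravel} only controls the antisymmetrized coefficients $\lambda^k_{ij}$ with $i,j \neq k$ (forcing them to vanish when $\eta$ is not log-exact, which is just integrability restated), and says nothing about $\lambda^k_{ik}$; moreover Lemma \ref{L:transitivetransverse} and Corollary \ref{C:quasitransvLie} apply to the untwisted ($\eta=0$) Maurer--Cartan system, so neither ``all $\lambda^k_{ij}$ are first integrals'' nor transverse parallelizability is actually established.

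For comparison, the paper's proof is far more elementary and bypasses all of this. Pick $\omega_1,\omega_2,\omega_3 \in \E_{\G}^1(\eta)$ with $\omega_1\wedge\omega_2\wedge\omega_3 \neq 0$ (this is where $q\ge 3$ enters) and let $f\in\C(X/\G)$. Since $f$ is a first integral, $\omega_i$, $\omega_j$, $\omega_i+\omega_j$ and $\omega_i+f\omega_j$ all lie in $\E_{\G}^1(\eta)$ and hence are integrable; expanding $d(\omega_i+f\omega_j)\wedge(\omega_i+f\omega_j)=0$ gives $df\wedge\omega_i\wedge\omega_j=0$ for each of the three pairs, and since the tangent sheaves of the three codimension-two foliations $\{\omega_i\wedge\omega_j=0\}$ generate $T_X$ at a general point, $df=0$. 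Incidentally, the idea at the very end of your proposal can be salvaged in one line by a different mechanism: a non-constant $f\in\C(X/\G)$ gives $0\neq df\in\E_{\G}^1(0)$, and Lemma \ref{L:unique generation} then forces $0$ to be log-equivalent to $\eta$, i.e.\ $\eta$ log-exact, a contradiction (which does not even use $q\ge3$). But as written, your proposal does not reach either of these arguments.
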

\begin{proof}
    Let $\omega_1, \omega_2, \omega_3  \in \E_{\G}^1(\eta)$ such that $\omega_1\wedge \omega_2 \wedge \omega_3 \neq 0$. Let $f \in \C(X/\G)$.  Proposition \ref{P:integravel} implies that $\omega_1, \omega_2, \omega_3 , \omega_1 + f \omega_2$, $\omega_1 + f \omega_3$, and $\omega_2 + f \omega_3$ are all integrable $1$-forms. The integrability of $\omega_1, \omega_2$, and $\omega_1 + f \omega_2$ implies that
    \[
        df \wedge \omega_1 \wedge \omega_2 = 0 \, .
    \]
    Hence $f$ is a first integral for the foliation defined by $\omega_1 \wedge \omega_2$. Likewise, $f$ is a first integral for the foliations defined by $\omega_1 \wedge \omega_3$ and $\omega_2 \wedge \omega_3$. Since the tangent sheaves of these three codimension two foliations generate $T_X$ over the open subset where $\omega_1 \wedge \omega_2 \wedge \omega_3$ is holomorphic and different from zero, the function $f$ must be constant.
\end{proof}

\subsection{Algebraically integrable foliations}

\begin{prop}\label{P:algebraically integrable}
    Let $\G$ be a codimension $q$ foliation on a projective manifold. If $\trdeg \C(X/\G) =q$ then
    the $\C(X/\G)$-vector space $\E_{\G}^1(0)$ generates $\G$. Moreover, there exists a projective manifold $Y$ of dimension $q$,
    a dominant rational map $\pi :X \dashrightarrow Y$ with irreducible general fiber such that for any distribution $\mathcal D$
    invariant by $\G$, there exists a distribution $\mathcal D_Y$ on $Y$ such that $\mathcal D = \pi^* \mathcal D_Y$.
\end{prop}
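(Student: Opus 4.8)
The plan is to realize $\G$ explicitly as the foliation by fibers of a fibration onto a $q$-dimensional base, and then treat the two assertions separately: the generation statement reduces to the observation that pullbacks of coordinate differentials are closed, hence flat, while the descent statement reduces to a standard ``a flat subbundle of a trivial partial connection is spanned by flat sections'' argument, which I would carry out with a minimal-support trick.

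\textbf{Setting up the fibration.} Since $\trdeg \C(X/\G) = q = \codim\G$, Proposition \ref{P:corpo} furnishes a projective manifold $Y$ with $\dim Y = q$ and a dominant rational map $\pi : X \dashrightarrow Y$ with irreducible general fiber such that $\C(X/\G) = \pi^*\C(Y)$. First I would check that $\G$ coincides with the foliation whose leaves are the fibers of $\pi$: the Zariski closure $\overline\G$ has $T_{\overline\G} = \ker(d\pi)$, of generic rank $n-q = \dim\G$, and since $T_\G \subseteq T_{\overline\G}$ (first integrals are annihilated by $T_\G$) with both saturated of the same rank, we get $\G = \overline\G$. Choosing rational functions $y_1,\dots,y_q$ on $Y$ with $dy_1\wedge\cdots\wedge dy_q\neq 0$, the forms $\omega_i := \pi^* dy_i = d(\pi^*y_i)$ are then $\C(X)$-linearly independent rational sections of $N^*_\G$, since each annihilates $\ker(d\pi)=T_\G$.

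\textbf{Assertion 1.} Each $\omega_i$ is closed, so the explicit formula for Bott's connection on $N^*_\G$ (taking the auxiliary forms $\eta_{ij}=0$) gives $\nabla(\omega_i)=0$; thus $\omega_i\in\E^1_\G(0)$. As these are $q$ linearly independent sections of the rank-$q$ sheaf $N^*_\G$, they generate $H^0(X, N^*_\G\otimes\Mer_X)$ over $\C(X)$, so $\E^1_\G(0)$ defines $\G$, the map being an isomorphism by Lemma \ref{L:E independente}. Writing $V_0 := \E^1_\G(0) = \bigoplus_i \C(X/\G)\,\omega_i$ and using $\C(X/\G)=\pi^*\C(Y)$, the elements of $V_0$ are exactly the pullbacks $\pi^*\beta$ of rational $1$-forms $\beta$ on $Y$.

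\textbf{Assertion 2.} Given an invariant distribution $\mathcal D$ of codimension $r<q$, set $W := H^0(X, N^*_{\mathcal D}\otimes\Mer_X)\subseteq V := H^0(X, N^*_\G\otimes\Mer_X)$, an $r$-dimensional $\C(X)$-subspace. Writing sections in the flat basis and using $\nabla(\omega_i)=0$, one finds $\nabla(\sum_i g_i\omega_i) = \sum_i \restr{dg_i}{T_\G}\otimes\omega_i$, so the $\G$-invariance of $\mathcal D$ says precisely that $W$ is stable under $\nabla$. I then claim $W = (W\cap V_0)\otimes_{\C(X/\G)}\C(X)$. To prove it, take a nonzero $\alpha\in W$ with the fewest nonzero coordinates in the basis $\omega_i$ and normalize one coordinate to $1$; for every $v\in T_\G$ the section $\nabla_v\alpha\in W$ has strictly smaller support, so minimality forces $\nabla_v\alpha=0$, whence $\alpha\in V_0$. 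Passing to the quotient $V/\C(X)\alpha$, which again carries a flat basis with $\nabla$-constants $\C(X/\G)$, and inducting on $r$, one obtains a $\C(X/\G)$-basis of $W\cap V_0$ spanning $W$. By Assertion 1 each such basis element is a pullback $\pi^*\beta_s$; the forms $\beta_1,\dots,\beta_r$ are $\C(Y)$-independent and define a codimension $r$ distribution $\mathcal D_Y$ on $Y$ with $\pi^*\mathcal D_Y = \mathcal D$ (the two conormal sheaves agree generically and both are saturated).

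The routine parts are the fibration set-up and Assertion 1; the crux — and the step I expect to require the most care — is the descent in Assertion 2, i.e. proving that a $\nabla$-invariant subspace of a trivial partial connection is spanned by flat sections. The minimal-support argument is the cleanest route, but one must verify that the induction preserves both the flat-basis property and the identification of the $\nabla$-constants with $\C(X/\G)$ after passing to the quotient, and that distinct $\C(X/\G)$-lifts descend to genuinely $\C(Y)$-independent forms on $Y$.
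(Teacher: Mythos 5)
Your proof is correct, and while Assertion 1 coincides with the paper's argument (the paper takes algebraically independent $f_1,\dots,f_q\in\C(X/\G)$ and uses their differentials, which is the same as your $\pi^*dy_i$), your treatment of Assertion 2 is genuinely different. The paper works with the single decomposable rational $r$-form $\alpha$ defining $\mathcal D$: by Lemma \ref{L:eta} it is an eigenvector for Bott's connection on $\wedge^r N^*_{\G}$, Lemma \ref{L:unique generation} forces the eigenvalue to be log-exact, so after rescaling $\alpha\in\E^r_{\G}(0)$, which is spanned over $\C(X/\G)=\pi^*\C(Y)$ by the wedges $df_{i_1}\wedge\cdots\wedge df_{i_r}$; hence $\alpha$ descends. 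You instead take the whole conormal space $W=H^0(X,N^*_{\mathcal D}\otimes\Mer_X)$, observe that invariance is exactly $\nabla$-stability of $W$, and prove by the minimal-support induction that a $\nabla$-stable subspace of a partially flat trivialized module is spanned by flat sections, i.e.\ by elements of $\E^1_{\G}(0)$. Both routes are valid. The paper's is shorter given the eigenspace machinery already in place; yours is more elementary and self-contained (it bypasses Lemma \ref{L:unique generation} and the log-equivalence bookkeeping entirely, needing only the flatness of the $\pi^*dy_i$), and it yields slightly more, namely a flat $\C(X/\G)$-basis of the conormal of $\mathcal D$ itself rather than only of its determinant. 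The quotient-and-induct step you flag as the delicate point does go through for the reason you indicate: once the minimal-support element $\alpha$ is known to be flat it can be completed to a $\C(X/\G)$-basis of $\E^1_{\G}(0)$, which is also a $\C(X)$-basis of $H^0(X,N^*_{\G}\otimes\Mer_X)$, and the induced connection on the quotient by $\C(X)\alpha$ is again trivialized with flat coefficients exactly $\C(X/\G)$.
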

\begin{proof}
    Let $f_1, \ldots, f_q \in \C(X/\G)$ be algebraically independent elements. Clearly their differentials belong to $\E_{\G}^1(0)$. Moreover, the algebraic independence implies that they generate a $q$ dimensional $\C(X)$-vector subspace of $H^0(X, N^*_{\G} \otimes \Mer_X)$. Therefore $\E_{\G}^1(0)$ generates $\G$ as claimed.

    Proposition \ref{P:corpo} implies the  existence of a projective manifold $Y$ and of a  rational map $\pi : X \dashrightarrow Y$, with irreducible general fiber, such that $\pi^* \C(Y) = \C(X/\G)$. Let $\mathcal D$ be a codimension $r$ distribution invariant by $\G$. Lemma \ref{L:unique generation} implies that $\mathcal D$ is defined by $\omega \in \E^r_{\G}(0)$. Consequently,  $\omega$ can be written as
    \[
        \sum_{I} a_I df_{i_1} \wedge \cdots \wedge df_{i_r}
    \]
    where $a_I \in \C(X/\G) = \pi^* \C(Y)$. Therefore $\omega$ is the pull-back under $\pi$ of a rational $r$-form on $Y$. The proposition follows.
\end{proof}

\section{Codimension two foliations}\label{Section:codim2}

This section is devoted to the proof of Theorem \ref{THM:pencil} of the introduction. We are actually going to establish the more precise version below.

\begin{thm}\label{T:Cerveau}
    Let $\G$ be a codimension two foliation on a projective manifold $X$. If there exists $\eta \in H^0(X,\Omega^1_{\G} \otimes \Mer_X)$ such that $\E_{\G}^1(\eta)$ generates $\G$ then one of the following statement holds true.
    \begin{enumerate}
        \item\label{I:Cerveau trdeg 2} there exists a projective surface $Y$ and a rational map $\pi: X \dashrightarrow Y$ such that $\G$ is defined by $\pi$ and every codimension one  foliation $\F$ containing $\G$ is of the form $\pi^* \F_Y$ for some foliation $\F_Y$ on $Y$; or
        \item\label{I:Cerveau trdeg 1} the field extension $\C(X/\G)/\C$ has transcendence degree equal to one, every codimension one foliation containing $\G$ is transversely affine, and the set of foliations containing $\G$ is parameterized by $\mathbb P^1(\C(X/\G))$; or
        \item\label{I:Cerveau trdeg 0} the general leaf of $\G$ is Zariski dense, every codimension one  foliation containing $\G$ is transversely affine, and the set of foliations containing $\G$ is parametrized by $\mathbb P^1(\C)$.
    \end{enumerate}
    Furthermore, if $\eta$ is not log-exact then Statement (\ref{I:Cerveau trdeg 0}) holds true.
\end{thm}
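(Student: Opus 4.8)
The plan is to stratify according to $\delta = \trdeg \C(X/\G)$, which by Proposition \ref{P:corpo} lies in $\{0,1,2\}$, and to match the values $\delta = 2,1,0$ with the three alternatives. If $\delta = 2$ the foliation is algebraically integrable and Proposition \ref{P:algebraically integrable} applies directly: it produces a projective surface $Y$ and a dominant rational map $\pi\colon X \dashrightarrow Y$ with $\pi^*\C(Y) = \C(X/\G)$ such that every distribution invariant by $\G$ has the form $\pi^* \mathcal D_Y$. Since by Proposition \ref{P:G-invariance} every codimension one foliation $\F$ containing $\G$ is invariant by $\G$, each such $\F$ is a pull-back $\pi^*\F_Y$, which is exactly Statement (\ref{I:Cerveau trdeg 2}). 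Note also that in this case $\E_\G^1(0)$ generates $\G$, so by Lemma \ref{L:unique generation} (with $k=1$) the eigenvalue $\eta$ is log-exact.

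For $\delta \le 1$ I would first establish the parameterization. Let $\F$ be any codimension one foliation containing $\G$. By Proposition \ref{P:G-invariance} it is invariant by $\G$, so by Lemma \ref{L:eta} a defining form $\alpha \in H^0(X, N^*_\G \otimes \Mer_X)$ satisfies $\nabla(\alpha) = \eta'\otimes\alpha$ for some $\eta'$; since $\E_\G^1(\eta')\neq 0$, Lemma \ref{L:unique generation} forces $\eta'$ to be log-equivalent to $\eta$, and rescaling $\alpha$ by a suitable rational function puts the defining form in $\E_\G^1(\eta)$. As $\E_\G^1(\eta)$ generates the rank two sheaf $N^*_\G$, it is a two-dimensional vector space over $K := \C(X/\G)$; two of its nonzero elements define the same foliation exactly when they are $\C(X)$-proportional, which for eigenvectors is equivalent to being $K$-proportional, and for $q=2$ every nonzero element is integrable because Lemma \ref{L:escritura} forces $d\omega\wedge\omega = 0$. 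Hence the codimension one foliations containing $\G$ are in bijection with $\mathbb P^1(K)$, which is $\mathbb P^1(\C(X/\G))$ when $\delta=1$ and $\mathbb P^1(\C)$ when $\delta = 0$ (since then $K = \C$).

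The substantial step is the transverse affineness asserted in (\ref{I:Cerveau trdeg 1}) and (\ref{I:Cerveau trdeg 0}). Whenever $\delta \ge 1$ the space $\E_\G^1(0)$ is nonzero, since it contains $df$ for any nonconstant first integral $f$, so $\eta$ is log-exact by Lemma \ref{L:unique generation}, and after changing the basis we may assume $\eta = 0$; then Corollary \ref{C:quasitransvLie} and Lemma \ref{L:transitivetransverse} give a basis $\omega_1, \omega_2$ of $\E_\G^1(0)$ with $d\omega_k = -\tfrac12\sum_{i,j} \lambda^k_{ij}\,\omega_i\wedge\omega_j$ and $\lambda^k_{ij}\in K$. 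For a member $\F$ defined by $\omega \in \E_\G^1(0)$, choosing a complementary $\omega'\in\E_\G^1(0)$ with $\omega\wedge\omega' = \omega_1\wedge\omega_2$ yields $d\omega = g\,\omega\wedge\omega'$ with $g\in K$ a first integral, so that $\theta_0 := g\,\omega'$ satisfies $d\omega = \omega\wedge\theta_0$; differentiating and using $d^2\omega = 0$ shows $d\theta_0$ lies in the ideal generated by $\omega$, that is, $\theta_0$ is closed along $\F$. It remains to correct $\theta_0$ to a globally closed rational $1$-form $\theta_0 + f\omega$, which amounts to solving a first order linear equation for $f$ with coefficients in $K$. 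When $\delta = 0$ one has $K = \C$, the transverse structure functions are constant, and the correction reduces to an algebraic equation over $\C$ (a general, possibly non-log-exact, $\eta$ merely contributes the leafwise-closed term $\widehat\eta$, which does not affect solvability over $\C$). When $\delta = 1$ the field $K$ is the function field of a curve $C$ and the equation becomes a first order linear ordinary differential equation on $C$, whose integrating factor is forced to be a product of integer powers of rational functions, so the correction is a closed rational (logarithmic) $1$-form, establishing transverse affineness. I expect this passage, producing the global closed rational form in the transcendence degree one case, to be the main obstacle, since it is the point where a genuine global residue argument, rather than linear algebra over $\C(X)$, is required.

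Finally, the concluding assertion follows cleanly from the above. If $\eta$ is not log-exact, then Lemma \ref{L:unique generation} implies $\E_\G^1(0) = 0$; since $df \in \E_\G^1(0)$ for every rational first integral $f$, there are no nonconstant first integrals, so $\C(X/\G) = \C$ and $\delta = 0$. Consequently $\overline\G$ is the foliation with tangent sheaf $T_X$, i.e. the general leaf of $\G$ is Zariski dense, and we are precisely in Statement (\ref{I:Cerveau trdeg 0}).
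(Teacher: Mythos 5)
Your stratification by $\delta=\trdeg\C(X/\G)$, the treatment of the case $\delta=2$ via Proposition \ref{P:algebraically integrable}, the parameterization of the foliations containing $\G$ by $\mathbb P^1(\C(X/\G))$, and the deduction of the ``furthermore'' clause from Lemma \ref{L:unique generation} all match the paper. The genuine gap is in the central step, the transverse affineness in cases (\ref{I:Cerveau trdeg 1}) and (\ref{I:Cerveau trdeg 0}), which you reduce to ``correcting'' $\theta_0=g\,\omega'$ to a closed form $\theta_0+f\omega$ by solving a first-order linear equation. You do not solve that equation, and the two claims you lean on are unjustified: a first-order linear ODE over the function field of a curve does not in general admit an integrating factor that is a product of integer powers of rational functions (think of $y'=y$), and in the $\delta=0$, non-log-exact case the structure functions $\lambda^k_{ij}$ of Lemma \ref{L:escritura} are only known to lie in $\C(X)$ --- Corollary \ref{C:quasitransvLie} puts them in $\C(X/\G)$ only when $\eta=0$ --- so your assertion that they are constants has no basis there.

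The paper's mechanism is different and avoids the correction problem entirely: it forms the $3$-web $\F_1\boxtimes\F_2\boxtimes\F_3$ with $\omega_1+\omega_2+\omega_3=0$ and shows that its curvature $\Theta=d\theta$ vanishes, so the connection form $\theta$ (with $d\omega_i=\theta\wedge\omega_i$) is \emph{already} closed and directly furnishes the transversely affine structure. When $\delta=1$ one chooses $\omega_1=df$ exact, forcing $\theta=g\,df$; if $d\theta=dg\wedge df$ were nonzero it would define $\G$ and produce two algebraically independent first integrals, contradicting $\delta=1$. When $\delta=0$ one invokes Proposition \ref{P:Cerveau}, a nontrivial computation showing that $\Theta\neq 0$ forces $\C(X/\G)\neq\C$. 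You correctly sensed that this passage is ``the main obstacle,'' but the argument you propose in its place does not close it; the missing idea is precisely the web-curvature computation.
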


Before dwelling with the proof of Theorem \ref{T:Cerveau}, we show how to deduce Theorem \ref{THM:pencil} from it.

\begin{prop}
    Theorem \ref{T:Cerveau} implies Theorem \ref{THM:pencil}.
\end{prop}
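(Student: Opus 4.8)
The plan is to translate the hypotheses of Theorem \ref{THM:pencil} into the language of Theorem \ref{T:Cerveau} and then read off the conclusion. Given the two rational $1$-forms $\omega_0, \omega_1$ with $\omega_0 \wedge \omega_1 \neq 0$ such that $s\omega_0 + t\omega_1$ is integrable for every $(s:t) \in \mathbb P^1$, I first want to produce the codimension two foliation $\G$ to which Theorem \ref{T:Cerveau} applies. The natural candidate is the foliation whose conormal sheaf is the saturation of the subsheaf of $\Omega^1_X$ generated by $\omega_0$ and $\omega_1$; its tangent sheaf is the common kernel $\ker(i_{\omega_0}) \cap \ker(i_{\omega_1})$. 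I must check that this $\G$ is genuinely a codimension two foliation, i.e.\ that the rank-two distribution it defines is involutive. This is where the hypothesis that \emph{every} member of the pencil is integrable does the work: integrability of $\omega_0$, of $\omega_1$, and of $\omega_0 + \omega_1$ together force the mixed term $d\omega_0 \wedge \omega_1 + d\omega_1 \wedge \omega_0$ to vanish on top of $d\omega_0 \wedge \omega_0 = d\omega_1 \wedge \omega_1 = 0$, and this is exactly the Frobenius/Plücker condition guaranteeing that $\omega_0 \wedge \omega_1$ defines a codimension two foliation $\G$.

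Next I would verify that $\G$ satisfies the hypothesis of Theorem \ref{T:Cerveau}, namely the existence of $\eta \in H^0(X, \Omega^1_{\G} \otimes \Mer_X)$ with $\E_{\G}^1(\eta)$ generating $\G$. Since the three foliations $\F_{(1:0)}, \F_{(0:1)}, \F_{(1:1)}$ defined by $\omega_0, \omega_1, \omega_0 + \omega_1$ all contain $\G$ and their conormals span $N^*_{\G}$ generically, I can invoke Corollary \ref{C:q+1} in the case $q = 2$: the set $\{\F_{(1:0)}, \F_{(0:1)}, \F_{(1:1)}\}$ is a collection of $q+1 = 3$ distinct codimension one foliations any $q = 2$ of which intersect in $\G$, so there is a common $\eta$ with $\omega_0, \omega_1 \in \E_{\G}^1(\eta)$, and hence $\E_{\G}^1(\eta)$ generates $\G$. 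I should double-check that the three foliations are indeed distinct (which follows from $\omega_0 \wedge \omega_1 \neq 0$) and that pairwise intersections all equal $\G$ rather than something larger.

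With the hypotheses of Theorem \ref{T:Cerveau} in hand, I read off the three cases and match them to the two alternatives of Theorem \ref{THM:pencil}. In case (\ref{I:Cerveau trdeg 2}) there is a rational map $\pi : X \dashrightarrow Y$ to a surface such that every codimension one foliation containing $\G$ is a pullback $\pi^* \F_Y$; since each $\F_{(s:t)}$ contains $\G$, this gives precisely alternative (1) of Theorem \ref{THM:pencil}, with $\G_{(s:t)}$ the foliation on $Y$ whose pullback is $\F_{(s:t)}$. In cases (\ref{I:Cerveau trdeg 1}) and (\ref{I:Cerveau trdeg 0}), every codimension one foliation containing $\G$ is transversely affine, hence in particular so is every $\F_{(s:t)}$, yielding alternative (2). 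The one point that requires a short argument is that every member $\F_{(s:t)}$ of the pencil actually contains $\G$, so that the conclusions of Theorem \ref{T:Cerveau}, which are phrased for all codimension one foliations containing $\G$, apply to the whole pencil and not merely to the three distinguished members; this is immediate since $s\omega_0 + t\omega_1$ annihilates $T_{\G}$ for every $(s:t)$.

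The main obstacle I anticipate is the very first step: establishing rigorously that $\omega_0 \wedge \omega_1$ defines a bona fide codimension two \emph{foliation} (an involutive, saturated distribution of the correct rank), rather than merely a distribution. The integrability of the whole pencil must be leveraged carefully, and one should confirm that $\omega_0 \wedge \omega_1$ has generic rank $\dim X - 2$ kernel so that the criterion recalled after \eqref{E:inclusion} applies. Everything afterward is essentially bookkeeping: invoking Corollary \ref{C:q+1} to produce $\eta$ and then dispatching the casework of Theorem \ref{T:Cerveau}.
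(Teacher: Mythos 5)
Your proposal is correct and follows essentially the same route as the paper: define $\G$ as the codimension two foliation cut out by $\omega_0 \wedge \omega_1$, invoke Corollary \ref{C:q+1} with $q=2$ (via the three members $\omega_0,\omega_1,\omega_0+\omega_1$) to obtain a common eigenvalue $\eta$, and then read the two alternatives of Theorem \ref{THM:pencil} off the three cases of Theorem \ref{T:Cerveau}. The only difference is that you supply details the paper leaves implicit (involutivity of the intersection, which in fact holds for the intersection of any two foliations, and the verification that every $\F_{(s:t)}$ contains $\G$); these are correct and harmless.
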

\begin{proof}
    Let $\omega_0$, $\omega_1$ be rational $1$-forms as in the statement of Theorem \ref{THM:pencil}. To wit, for every $(s:t) \in \mathbb P^1$ the $1$-form $s\omega_0 + t \omega_1$ is integrable. Let $\G$ be the codimension two foliation defined by $\omega_0 \wedge \omega_1$. Corollary \ref{C:q+1}     implies the existence of $\eta \in H^0(X,\Omega^1_{\G} \otimes \Mer_X)$  such that $\omega_0 , \omega_1 \in \E^1_{\G}(\eta)$. We are therefore in position to apply Theorem \ref{T:Cerveau}. Theorem \ref{THM:pencil} follows.
\end{proof}

We will now proceed to proof Theorem \ref{T:Cerveau}. As already mentioned in the introduction, most of the arguments are due to Cerveau and appeared in \cite{cerveau2002pinceaux}, see also \cite{MR1285386}. The novelty in our argumentation appears in the second paragraph of Subsection \ref{SS:Proof Pencil} which deals with the case where $\trdeg \C(X/\G) =1$.

\subsection{Curvature of $3$-webs}
Under the assumptions of Theorem \ref{T:Cerveau}, Lemma \ref{L:invariant vs integrable} implies that any rational $1$-form in $\E_{\G}^1(\eta)$ is integrable. Given any three $1$-forms $\omega_1, \omega_2, \omega_3 \in \E_{\G}^1(\eta)$ defining three distinct foliations $\F_1, \F_2, \F_3$ and satisfying
\[
    \omega_1 + \omega_2 + \omega_3 = 0
\]
there exists a unique rational $1$-form $\theta$ such that
\[
    d \omega_i = \theta \wedge \omega_i \, .
\]

The three foliations define a $3$-web $\mathcal W = \F_1 \boxtimes \F_2 \boxtimes \F_3$, see \cite{MR3309231}. The differential of $\theta$, $\Theta = d \theta$, is intrinsically attached to $\mathcal W$ and is the so-called curvature of $\mathcal W$. Notice that
\[
    0 = d (d \omega_i) = \Theta \wedge \omega_i - \theta \wedge d \omega_i \implies \Theta\wedge \omega_i =0 \, .
\]
Therefore, if the $2$-form $\Theta$ is non-zero, it defines the codimension two foliation $\G = \cap_{i=1}^3 \F_i$ we started with.
If instead $\Theta$ vanishes identically then the every foliation in the $3$-web is tranversely affine. For more about web geometry, see \cite{MR3309231}. For the concept of transversely affine foliations, the reader can consult \cite{MR3294560} and references therein.

The key observation, due to Cerveau (see \cite[Proposition 2]{cerveau2002pinceaux} and \cite[Proposition 7]{MR1285386}), is the following result. We reproduce his arguments for reader's ease.

\begin{prop}\label{P:Cerveau}
    Notation as above. If the curvature of $\mathcal W$ is non-zero  then $\C(X/\G) \neq \C$.
\end{prop}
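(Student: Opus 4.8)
The plan is to exploit that the curvature $\Theta=d\theta$ is an \emph{exact}, hence closed, rational $2$-form which, by the identity $\Theta\wedge\omega_i=0$ already established above, must be decomposable and proportional to the form defining $\G$. Since $\omega_1,\omega_2$ generate $N^*_{\G}$ at a general point and $\Theta\wedge\omega_1=\Theta\wedge\omega_2=0$, a pointwise linear-algebra argument forces $\Theta=g\,\omega_1\wedge\omega_2$ for a unique $g\in\C(X)$, and the hypothesis that the curvature is non-zero means $g\neq0$. The goal is then to manufacture a non-constant rational first integral of $\G$ out of $g$, $\theta$, and the web relations.

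The first step is to extract a differential relation from $d\Theta=0$. Expanding $d(g\,\omega_1\wedge\omega_2)$ and substituting $d\omega_i=\theta\wedge\omega_i$ gives $d(\omega_1\wedge\omega_2)=2\,\theta\wedge\omega_1\wedge\omega_2$, whence
\[
    0=d\Theta=(dg+2g\,\theta)\wedge\omega_1\wedge\omega_2 .
\]
As $\omega_1,\omega_2$ are generically independent, this means $dg+2g\theta$ is a section of $N^*_{\G}$, i.e. $\restr{\bigl(\tfrac{dg}{g}+2\theta\bigr)}{T_{\G}}=0$. Recalling that $\omega_i\in\E^1_{\G}(\eta)$ translates, via the explicit formula for Bott's connection on the conormal sheaf, into $\restr{\theta}{T_{\G}}=\eta$, I obtain the key identity $\restr{\tfrac{dg}{g}}{T_{\G}}=-2\eta$.

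Next I would introduce the corrected $1$-form $\beta:=\theta+\tfrac12\tfrac{dg}{g}$, which by the identity above satisfies $\restr{\beta}{T_{\G}}=0$ and is therefore a rational section of $N^*_{\G}$; write $\beta=a\,\omega_1+b\,\omega_2$ with $a,b\in\C(X)$. Substituting $\theta=\beta-\tfrac12\tfrac{dg}{g}$ into $\Theta=d\theta$ and using $d\omega_i=\theta\wedge\omega_i$ together with $d(\tfrac{dg}{g})=0$ yields
\[
    \Theta=\bigl(da-\tfrac a2\tfrac{dg}{g}\bigr)\wedge\omega_1+\bigl(db-\tfrac b2\tfrac{dg}{g}\bigr)\wedge\omega_2 .
\]
Wedging this expression with $\omega_1$ and with $\omega_2$ and comparing with $\Theta=g\,\omega_1\wedge\omega_2$ forces $\restr{\tfrac{da}{a}}{T_{\G}}=\restr{\tfrac{db}{b}}{T_{\G}}=-\eta$. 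Combined with $\restr{\tfrac{dg}{g}}{T_{\G}}=-2\eta$, this shows that both $a/b$ and $g/a^2$ have logarithmic differentials annihilated by $T_{\G}$, hence are rational first integrals of $\G$ (with the trivial symmetric modifications when $a$ or $b$ vanishes).

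It remains to rule out the degenerate possibility that both $a/b$ and $g/a^2$ are constant, which is where I expect the only real subtlety to lie. Assuming $a=c_1 b$ and $g=c_2 a^2$ with $c_1,c_2\in\C$, one rewrites $\theta=a\,\omega-d\log a$, where $\omega=\omega_1+c_1^{-1}\omega_2\in\E^1_{\G}(\eta)$ has constant coefficients and therefore satisfies $d\omega=\theta\wedge\omega$; a direct computation then collapses $d\theta$ to zero, contradicting $\Theta\neq0$. Hence at least one of $a/b$, $g/a^2$ is a non-constant rational first integral, so $\C(X/\G)\neq\C$, as claimed. The main point to handle carefully is precisely that the whole argument must run \emph{without} assuming $\eta$ to be log-exact — only $2\eta$ is, via $g$ — and the device of passing to $\beta=\theta+\tfrac12\tfrac{dg}{g}$ is what makes the treatment uniform and sidesteps the unavailable square root of $g$.
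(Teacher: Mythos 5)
Your proposal is correct and follows essentially the same route as the paper's proof: you recover the identity $(dg+2g\theta)\wedge\omega_1\wedge\omega_2=0$ from $d\Theta=0$, decompose $\theta+\tfrac12\tfrac{dg}{g}$ as $a\omega_1+b\omega_2$ (the paper's $K_1\omega_1+K_2\omega_2$), extract the first integrals $a/b$ and $g/a^2$ (the paper's $K_1/K_2$ and $K_i^2/a$ up to inversion), and rule out the all-constant case by collapsing $d\theta$ to zero. The only differences are cosmetic (phrasing via $\eta$ and Bott's connection, and your explicit remark on the vanishing of $a$ or $b$, which the paper leaves implicit).
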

\begin{proof}
    Since $\Theta \wedge \omega_i =0$, there exists a non-zero rational function $a \in \C(X)$ such that
    \[
        d \theta = \Theta = a \omega_1 \wedge \omega_2.
    \]
    Differentiation leads to the identities
    \begin{align*}
        0 &= da\wedge \omega_1\wedge \omega_2 + a(d\omega_1\wedge\omega_2-\omega_1\wedge d\omega_2)\\
        &= (da + 2a\theta)\wedge\omega_1\wedge\omega_2.
    \end{align*}
    It follows the existence of rational $K_1,K_2 \in \C(X)$  such that
    \begin{equation}\label{E:a_theta}
        \frac{1}{2}\frac{da}{a}  +\theta= K_1\omega_1+K_2\omega_2.
    \end{equation}
    Note that if $K_1,K_2$ and $a$ were all constants, then $\theta$ would be an element of the pencil. Therefore $\Theta = d\theta = \theta\wedge\theta = 0$, contradicting our hypothesis.

    Taking the derivative of Equation (\ref{E:a_theta})  yields
    \[
        d\theta = (dK_1+K_1\theta)\wedge\omega_1 + (dK_2+K_2\theta)\wedge\omega_2.
    \]
    Wedging this expression  with $\omega_i$ gives
    \begin{align*}
        0 &= (dK_i+K_i\theta)\wedge \omega_1 \wedge \omega_2.\\
        0 &= \left(\frac{dK_i}{K_i} - \frac{1}{2} \frac{da}{a} \right) \wedge\omega_1\wedge\omega_2.\\
        0 &= d\left(\frac{K_i^2}{a}\right)\wedge\omega_1\wedge\omega_2.
    \end{align*}
    We conclude that $\frac{K_1^2}{a},\frac{K_2^2}{a}$ and $\frac{K_1}{K_2}$ are all first integrals for $\G$. It remains to show that one of these rational functions is non-constant. If not,
    \[
        \frac{1}{2} \frac{da}{a} + \theta = K_1 \omega_1+ K_2 \omega_2 = K_2(C \omega_1+ \omega_2),
    \]
    for some $C\in\C$. Then
    \[
        d\theta = dK_2\wedge(C\omega_1+\omega_2) + K_2\theta\wedge(C\omega_1+\omega_2)= \left(\frac{dK_2}{K_2}+\theta\right)\wedge K_2(C\omega_1+\omega_2).
    \]
    On the other hand, $d\left(\frac{K_2^2}{a}\right)=0$ implies that $\frac{1}{2}\frac{da}{a}= \frac{dK_2}{K_2}$. And thus $\Theta = d\theta=0$, contrary to our assumptions. The proposition follows.
\end{proof}

\subsection{Proof of Theorem \ref{T:Cerveau}}\label{SS:Proof Pencil} If $\trdeg(\C(X/\G)) = 2$ then Proposition \ref{P:algebraically integrable} implies Item (\ref{I:Cerveau trdeg 2}).

If instead $\trdeg(\C(X/\G)) =1$, let $f \in \C(X/\G)$ be a non-constant rational first integral for $\G$. Note that  $\omega_1 =  df$, the differential of $f$, is a flat rational section of $N^*_{\G}$. Therefore, $\omega_1 = df \in \E_{\G}^1(0)$. Lemma \ref{L:unique generation} implies that $\eta$ is log-exact and there is no loss of generality in assuming that $\E_{\G}^1(0)$ generates $\G$. If $\omega_2 \in \E_{\G}^1(0)$ is any element defining an invariant distribution different from the one defined by $\omega_1=df$, then Lemma \ref{L:escritura} implies that $\omega_2\wedge d\omega_2=0$ and therefore defines a foliation. Hence the set of foliations containing $\G$ can be identified with $\mathbb P(\E_{\G}^1(\eta))(\C(X/\G)) = \mathbb P^1(\C(X/\G))$. Consider the $3$-web $\mathcal W$ defined by $\omega_1, \omega_2$, and $\omega_3 = - \omega_1 - \omega_2$. Let $\theta$ be the unique rational $1$-form satisfying
\[
    d \omega_i = \theta \wedge \omega_i \, .
\]
Since $\omega_1$ is closed, it follows that $\theta \wedge df =0 $, and hence $\theta = g df$ for some $g \in \C(X)$. Therefore, the curvature of $\mathcal W$ is equal to $\Theta = d \theta = dg \wedge df$. If non-zero, it would define $\G$ implying that $f$ and $g$ are algebraically independent rational first integrals for $\G$. But this is impossible since we are assuming that $\trdeg(\C(X/G)) = 1$. We deduce that $\Theta$ is equal to zero, and any of the three foliations $\F_1, \F_2$, and $\F_3$, is transversely affine. Item (\ref{I:Cerveau trdeg 1}) follows.

If $\trdeg(\C(X/\G))=0$, then when $\eta$ is log-exact, as before, Lemma \ref{L:escritura} implies that every element in $\E_{\G}^1(\eta)\simeq \E_{\G}^1(0)$ is integrable. When $\eta$ is not log-exact the same holds true thanks to Proposition \ref{P:integravel}. Hence $\E_{\G}^1(\eta)$ is a two-dimensional $\C$-vector space, and the set of codimension one foliations containing $\G$ is described by $\mathbb P(\E_{\G}^1(\eta))(\C)$. Finally, Proposition \ref{P:Cerveau} implies that every foliation containing $\G$ is transversely affine. \qed

\subsection{Foliations generated by non-log exact eigenspaces}

\begin{cor}\label{C:Cerveau}
    Let $q$ be an integer strictly greater than $2$, and let  $\G$ be a codimension $q$ foliation on a projective manifold $X$. If there exists $\eta \in H^0(X,\Omega^1_{\G} \otimes \Mer_X)$ such that $\eta$ is not log-exact and $\E_{\G}^1(\eta)$ generates $\G$, then the general leaf of $\G$ is Zariski dense, every codimension one  foliation containing $\G$ is transversely affine, and the set of foliations containing $\G$ is parametrized by $\mathbb P^{q-1}(\C)$.
\end{cor}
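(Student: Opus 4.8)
The plan is to derive the whole statement from the single reduction $\C(X/\G)=\C$, which is precisely the content of Corollary \ref{C:zariski dense}: since $\eta$ is not log-exact and $q\ge 3$, the field of rational first integrals of $\G$ consists only of constants. This already yields the first assertion, that the general leaf of $\G$ is Zariski dense. Indeed, the tangent sheaf of the Zariski closure $\overline{\G}$ annihilates all rational first integrals, so $\C(X/\G)=\C$ forces $T_{\overline{\G}}=T_X$; Bonnet's theorem (cited after Proposition \ref{P:corpo}) then identifies the Zariski closure of a general leaf of $\G$ with the single leaf $X$ of $\overline{\G}$.

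Next I would pin down the dimension of the eigenspace. Since $\E_{\G}^1(\eta)$ generates $\G$, one has $\E_{\G}^1(\eta)\otimes_{\C(X/\G)}\C(X)\simeq H^0(X,N^*_{\G}\otimes\Mer_X)$, a $\C(X)$-vector space of dimension $q=\codim\G$; as $\C(X/\G)=\C$, this forces $\dim_{\C}\E_{\G}^1(\eta)=q$. By Proposition \ref{P:integravel} every nonzero $\omega\in\E_{\G}^1(\eta)$ is integrable, and being a rational section of $N^*_{\G}$ it defines a codimension one foliation containing $\G$. Two elements of $\E_{\G}^1(\eta)$ define the same foliation exactly when they differ by a rational factor $f$; but then $\restr{\frac{df}{f}}{T_{\G}}=0$, so $f\in\C(X/\G)=\C$. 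Conversely, any codimension one foliation $\F\supseteq\G$ is invariant by $\G$ (Proposition \ref{P:G-invariance}), hence by Lemma \ref{L:eta} it is defined by some $\omega\in\E_{\G}^1(\eta')$; Lemma \ref{L:unique generation} shows $\eta'$ is log-equivalent to $\eta$, and multiplying $\omega$ by the corresponding rational function produces a defining form of $\F$ lying in $\E_{\G}^1(\eta)$. Thus the codimension one foliations containing $\G$ are exactly the points of $\mathbb P(\E_{\G}^1(\eta))=\mathbb P^{q-1}(\C)$, which is the third assertion.

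Finally, for transverse affineness I would reproduce the $3$-web argument from the $\trdeg=0$ case of Theorem \ref{T:Cerveau}. Fix a codimension one foliation $\F_1\supseteq\G$, defined by $\omega_1\in\E_{\G}^1(\eta)$. Since $\dim_{\C}\E_{\G}^1(\eta)=q\ge 3$, I can choose $\omega_2\in\E_{\G}^1(\eta)$ with $\omega_1,\omega_2$ linearly independent over $\C$ (equivalently, by the previous paragraph, over $\C(X)$) and set $\omega_3=-\omega_1-\omega_2$. The three forms are pairwise independent and define three distinct foliations, so they assemble into a $3$-web $\mathcal W$ with its unique $\theta$ satisfying $d\omega_i=\theta\wedge\omega_i$ and curvature $\Theta=d\theta$. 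Proposition \ref{P:Cerveau} says a nonzero curvature would force $\C(X/\G)\neq\C$; since $\C(X/\G)=\C$, we conclude $\Theta=0$, so $\theta$ is closed and $d\omega_1=\omega_1\wedge(-\theta)$ exhibits $\F_1$ as transversely affine. The main point requiring care is verifying that the three chosen forms really define distinct, pairwise transverse foliations, so that $\mathcal W$ is an honest $3$-web and Proposition \ref{P:Cerveau} applies; this is exactly where the equality $\C(X/\G)=\C$, which rules out rational proportionality between distinct eigenvectors, enters a second time.
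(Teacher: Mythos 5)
Your proposal is correct and follows essentially the same route as the paper: Corollary \ref{C:zariski dense} gives $\C(X/\G)=\C$ and hence Zariski density, Proposition \ref{P:integravel} gives integrability and the $\mathbb P^{q-1}(\C)$ parametrization, and transverse affineness comes from the $3$-web curvature vanishing. The only cosmetic difference is that the paper delegates the last step to the ``furthermore'' clause of Theorem \ref{T:Cerveau} applied to the codimension two foliation spanned by $\omega_1,\omega_2$, whereas you inline the underlying argument by invoking Proposition \ref{P:Cerveau} directly (correctly noting that $\C(X/\G')\subseteq\C(X/\G)=\C$ for the intersection $\G'$ of the web), which amounts to the same proof.
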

\begin{proof}
    Since the codimension of $\G$ is at least $3$ by assumption, Corollary \ref{C:zariski dense} implies that $\C(X/\G) = \C$, that is, the general leaf of $\G$ is Zariski dense. Since $\eta$ is not log-exact, Proposition \ref{P:integravel} implies that any $1$-form in $\E_{\G}^1(\eta)$ is integrable. It follows that the set of codimension one foliations containing $\G$ is parametrized by $\mathbb P(\E_{\G}^1(\eta))(\C)= \mathbb P^{q-1}(\C)$ as claimed. Take any two linearly independent $1$-forms $\omega_1, \omega_2$ in $\E_{\G}^1(\eta)$ and considering the codimension two foliation $\G'$ defined by them. Theorem \ref{T:Cerveau} implies that the foliations defined by $s \omega_1 + t \omega_2$, for arbitrary $(s,t) \in \C^2-\{0\}$, are transversely affine.
\end{proof}

We finish this section by showing examples that fit the assumptions of Corollary \ref{C:Cerveau}.

\begin{example}
    Let $X_0$ be a simple abelian variety of dimension $q+1$ and consider a foliation $\G_0$ defined by a global holomorphic vector field. Since $X_0$ is simple, the leaves of $\G_0$ are Zariski dense. Note that for $\G_0$, $\E_{\G_0}^1(0) = H^0(X_0, N^*_{\G_0}) \subset H^0(X_0,\Omega^1_{X_0})$ has dimension $q$ and defines $\G_0$.

    Let $\varphi : X_0 \to X_0$ be the automorphism defined by multiplication by $-1$. Observe that $\varphi$ preserves $\G_0$ and all the codimension one foliations containing it. Set $\pi: X \dashrightarrow X_0$ equal to the composition of a  resolution of singularities of the quotient of $X_0$ by $\varphi$, and set $\G$ as the codimension $q$ foliation on $X$ induced by $\G_0$. Let $f \in \C(X_0)$ by any rational function on $X_0$ such that $\varphi^* f = -f$. If $\omega_1, \ldots, \omega_q \in H^0(X_0,N^*_{\G_0})= \E_{\G}^1(0)$ form a basis then $f \omega_1, \ldots, f\omega_q$ form a basis of
    \[
        \E_{\G_0}^1\left(\restr{\frac{df}{f}}{T_{\G_0}}\right) .
    \]
    The $\varphi$-invariance of $f \omega_i$ and of $\frac{df}{f}$ implies the existence of rational $1$-forms $\beta_1, \ldots, \beta_q$, $\eta$ on $X$ such that
    $f\omega_i = \pi^* \beta_i$ and $\frac{df}{f} = \pi^* \eta$. Moreover, $\beta_1, \ldots, \beta_q$ form a basis of $\E^1_{\G}(\restr{\eta}{T_{\G}})$. We claim that $\restr{\eta}{T_{\G}}$ is not log-exact. Indeed, if it were the case then it would exist a rational function $g \in \C(X)$ such that
    \[
        \restr{\eta}{T_{\G}} = \restr{\frac{dg}{g}}{T_{\G}}.
    \]
    The pull-back of $g$ under $\pi$ would be a $\varphi$-invariant rational function $\tilde g$ such that $\restr{\frac{df}{f} - \frac{d\tilde g}{\tilde g}}{T_{\G_0}}=0$. Therefore $f/\tilde g$ would be a rational first integral for $\G_0$ satisfying $\varphi^* f/\tilde g = - (f/ \tilde g)$, hence non-constant. This contradicts our initial choice of $\G_0$, proving that $\restr{\eta}{T_{\G}}$ is not log-exact and showing that the foliation $\G$ satisfies the assumptions of Corollary \ref{C:Cerveau}.
\end{example}

\section{Foliations without non-constant rational first integrals}\label{Section:trdeg0}

This section studies foliations of codimension at least two with field of rational first integrals equal to $\C$ and presents the proof of Theorem \ref{THM:Lie} from the introduction.

\subsection{Transversely Lie foliations}

\begin{prop}\label{P:subalgebra}
    Let $\G$ be a  transversely Lie foliation modeled over $\mathfrak g$ on a projective manifold $X$ with $\C(X/\G) = \C$. Let $\Omega$ be the $\mathfrak g$-valued rational $1$-form defining $\G$. If $\F$ is a foliation containing $\G$ then there exists a subalgebra $\mathfrak h \subset \mathfrak g$ of dimension $\dim \F - \dim \G$ such that $\F$ is defined by the $\mathfrak g/\mathfrak h$-valued rational $1$-form obtained by composing $\Omega$ with the natural projection $\mathfrak g \to \mathfrak g/\mathfrak h$.
\end{prop}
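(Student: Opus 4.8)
The plan is to show that the conormal sheaf $N^*_\F$ is generated, over $\C(X)$, by a fixed $\C$-subspace $W$ of the flat rational sections of $N^*_\G$, and then to recover $\mathfrak h$ as the annihilator $W^\perp\subset\mathfrak g$. First I would record the local picture of the transverse structure. Writing $\Omega=\sum_{i=1}^q \omega_i\otimes\xi_i$ for a basis $\xi_1,\dots,\xi_q$ of $\mathfrak g$, the components $\omega_i$ are rational sections of $N^*_\G$ that generate it generically (condition (\ref{I:lie2})) and satisfy the Maurer--Cartan relations $d\omega_k=-\frac12\sum_{i,j}\lambda^k_{ij}\,\omega_i\wedge\omega_j$, with $\lambda^k_{ij}$ the (constant) structure constants of $\mathfrak g$. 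Since every $\eta_{ki}$ in $d\omega_k=\sum_i\eta_{ki}\wedge\omega_i$ is a combination of the $\omega_j$, it restricts to zero on $T_\G$; hence $\nabla\omega_k=0$ and $\E_{\G}^1(0)$ generates $\G$. Because $\C(X/\G)=\C$, the $\omega_k$ being $\C(X)$-independent forces $\E_{\G}^1(0)=\langle\omega_1,\dots,\omega_q\rangle_\C$ to be a $q$-dimensional $\C$-vector space, which I identify with $\mathfrak g^*$ via $\omega_k\mapsto\xi_k^*$.

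Second, I would produce $W$. Let $r=\codim\F$ and let $\tau_1,\dots,\tau_r$ be a $\C(X)$-basis of the rational sections of $N^*_\F$, so $\alpha=\tau_1\wedge\cdots\wedge\tau_r$ defines $\F$ as a distribution. By Proposition \ref{P:G-invariance} the foliation $\F$ is invariant by $\G$, so Lemma \ref{L:eta} yields $\zeta\in H^0(X,\Omega^1_{\G}\otimes\Mer_X)$ with $\nabla\alpha=\zeta\otimes\alpha$. Expanding $\alpha=\sum_I a_I\,\omega_I$ in the flat basis $\omega_I=\omega_{i_1}\wedge\cdots\wedge\omega_{i_r}$ of $\wedge^r N^*_\G$ and using $\nabla\omega_I=0$, the relation $\nabla\alpha=\zeta\otimes\alpha$ forces $\restr{da_I}{T_\G}=\zeta\,a_I$ for every $I$. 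For any two nonzero coefficients this gives $\restr{d(a_I/a_J)}{T_\G}=0$, hence $a_I/a_J\in\C(X/\G)=\C$; this is the computation already used in Lemma \ref{L:E independente}. Therefore $\alpha=a_{I_0}\,\alpha_0$ with $\alpha_0=\sum_I c_I\,\omega_I\in\wedge^r\mathfrak g^*$, $c_I\in\C$, a flat and decomposable $r$-vector. Decomposability is cut out by the Plücker relations, which are defined over $\C$, so $\alpha_0=w_1\wedge\cdots\wedge w_r$ with $w_j\in\mathfrak g^*$; setting $W=\langle w_1,\dots,w_r\rangle_\C$ I obtain $N^*_\F\otimes\Mer_X=W\otimes_\C\C(X)$, since both subspaces have the same $r$-th exterior power $\C(X)\cdot\alpha_0$.

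Finally, I would set $\mathfrak h=W^\perp=\{X\in\mathfrak g:\phi(X)=0\text{ for all }\phi\in W\}$, of dimension $q-r=\dim\F-\dim\G$, and show it is a subalgebra. After a linear change of basis of $\mathfrak g$ I may assume $W=\langle\omega_1,\dots,\omega_r\rangle$ and $\mathfrak h=\langle\xi_{r+1},\dots,\xi_q\rangle$. The integrability of $\F$ means $dw\in\Omega^1_X\wedge W$ for $w\in W$; intersecting with $\wedge^2 N^*_\G$, this says the $\omega_i\wedge\omega_j$-component of $dw$ vanishes whenever $i,j>r$. Via the Maurer--Cartan relations the coefficient of $\omega_i\wedge\omega_j$ in $dw$ equals $-\langle w,[\xi_i,\xi_j]\rangle$, so the condition reads $[\xi_i,\xi_j]\in W^\perp=\mathfrak h$ for all $i,j>r$, that is $[\mathfrak h,\mathfrak h]\subset\mathfrak h$. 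Thus the integrability of $\F$ is exactly the statement that $\mathfrak h$ is a Lie subalgebra. Since $T_\F=\{v:\omega_k(v)=0,\ k\le r\}=\{v:\Omega(v)\in\mathfrak h\}$ is the kernel of contraction with $p\circ\Omega$, where $p:\mathfrak g\to\mathfrak g/\mathfrak h$ is the projection, the proposition follows. I expect the delicate step to be the second paragraph: proving that the $\G$-invariant subsheaf $N^*_\F$ is actually defined over the constants. The point is the interplay between $\nabla$-invariance, which forces the coefficient ratios to be first integrals and hence constants, and the genuine decomposability of $\alpha$ over $\C$ via Plücker, which together make the fixed subspace $W$ appear; the subalgebra verification is then merely the Chevalley--Eilenberg dictionary.
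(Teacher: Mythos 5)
Your proposal is correct and follows essentially the same route as the paper's proof: use $\G$-invariance together with Lemma \ref{L:eta} and the eigenspace machinery to normalize the defining $r$-form to a constant-coefficient element of $\wedge^r\mathfrak g^*$, deduce from its decomposability a $\C$-subspace $W\subset\mathfrak g^*$, and read off from integrability that $\mathfrak h=W^\perp$ is a subalgebra. You merely make explicit two steps the paper leaves implicit (the Pl\"ucker argument for descending decomposability to $\C$, and the Chevalley--Eilenberg computation identifying involutivity with $[\mathfrak h,\mathfrak h]\subset\mathfrak h$), which is fine.
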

\begin{proof}
    Since $\G$ is transversely Lie we have that $\G$ is defined by $\E_\G^1(0)$. Since $\C(X/\G) = \C$, $\E_{\G}^1(0)$ is a dimension $q=\codim \G$ complex vector space. Notice that $\E_{\G}^k(0)$ generates $H^0(X, \wedge^k N^*_{\G} \otimes \Mer_X)$, see Lemma \ref{L:unique generation}.

    Let $\F$ be a codimension $k$ foliation containing $\G$ and let $\omega$ be a rational $k$-form defining $\F$. Lemma \ref{L:eta} implies the existence $\eta \in H^0(X, \Omega^1_{\G} \otimes \Mer_X)$ such that $\nabla(\omega) = \eta \otimes \omega$, where $\nabla$ is the partial connection on $\wedge^k N^*_{\G}$ induced by Bott's partial connection on $N^*_{\G}$. Lemma \ref{L:E independente} implies that $\eta$ is log-exact. Hence, after multiplication of $\omega$ by the inverse of a  primitive of $\eta$, we may assume that $\omega \in \E_{\G}^k(0) \simeq \wedge^k \mathfrak g^*$.

    The decomposability of $\omega$ as a rational $k$-form  implies the decomposability of $\omega$ as an element of $\wedge^k \mathfrak g^*$. Hence, the quotient $T_\F/T_{\G} \subset N_{\G}$ is generated by a vector subspace $\mathfrak h$ of $\mathfrak g$. The involutiveness of $T_{\F}$ implies that $\mathfrak h$ is a Lie subalgebra of $\mathfrak g$. The result follows.
\end{proof}

\subsection{Proof of Theorem \ref{THM:Lie}} Let $\G$ be a codimension $q$ foliation with $\C(X/\G)=\C$. If $q=2$ then the result follows from Theorem \ref{THM:pencil}. From now one, assume that $q >2$. Corollary \ref{C:q+1} implies the existence
of $\eta \in H^0(X, \Omega^1_{\G} \otimes \Mer_X)$ such that $\E^1_{\G}(\eta)$ generates $\G$.

Assume first that $\eta$ is log-exact. In this case the foliation $\G$ is transversely Lie and the result follows from Proposition \ref{P:subalgebra} above.

If instead $\eta$ is not log-exact then, as we are assuming $q > 2$,  Corollary \ref{C:Cerveau} implies that the set of codimension one foliations containing $\G$ is naturally isomorphic to $\mathbb P^{q-1}$ and that every codimension one foliation containing $\G$ is transversely affine. The claim about the higher codimension foliations follows from Lemma \ref{L:unique generation} as argued in the proof of Proposition \ref{P:subalgebra} taking into account the assumption $\C(X/\G) = \C$ .
\qed

\section{Foliations containing a transversely parallelizable foliation}\label{Section:transv_paral}

Before proceeding to the proofs of Theorem \ref{THM:main}, we establish the following intermediate result.

\begin{thm}\label{T:transversely projective}
    Let $\G$ be a codimension $q$ transversely parallelizable foliation on a projective manifold $X$ and let $\pi : X \dashrightarrow Y$ the rational map with irreducible general fiber to a smooth projective manifold $Y$ such that $\C(X/\G) = \pi^* \C(Y)$ given by Proposition \ref{P:corpo}. If $\F$ is a codimension one foliation containing $\G$ then either $\F$ is the pull-back under $\pi$ of a codimension one foliation on $Y$, or $\F$ is transversely projective.
\end{thm}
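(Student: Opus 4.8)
The plan is to reduce the statement to the structure equations of the transverse parallelism and then to run a fibrewise Lie-theoretic reduction to $\mathfrak{sl}_2$. First I would fix the data from Lemma~\ref{L:transitivetransverse}: rational $1$-forms $\omega_1,\dots,\omega_q\in H^0(X,N^*_{\G}\otimes\Mer_X)$ generating $N^*_{\G}$ at a general point, flat for Bott's connection (so $\omega_i\in\E_{\G}^1(0)$), with $d\omega_k=-\tfrac12\sum_{i,j}\lambda^k_{ij}\,\omega_i\wedge\omega_j$ and $\lambda^k_{ij}\in\C(X/\G)=\pi^*\C(Y)$. Since $\G$ is transversely parallelizable, $\E_{\G}^1(0)$ generates $\G$; hence Proposition~\ref{P:G-invariance} together with Lemmas~\ref{L:eta} and~\ref{L:unique generation} show that the eigenvalue attached to $\F$ is log-exact, and after rescaling the defining form I may assume $\F$ is defined by some $\omega\in\E_{\G}^1(0)$, that is $\omega=\sum_i a_i\omega_i$ with $a_i\in\C(X/\G)=\pi^*\C(Y)$.

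Next I would isolate the first alternative. The foliation $\F$ is a pull-back from $Y$ precisely when $T_{\overline{\G}}\subset T_{\F}$, i.e. when $\omega$ is proportional to a $\pi$-pull-back; since $\pi$ has irreducible general fibre, such an $\F$ descends to a codimension one foliation on $Y$. I therefore assume from now on that $\overline{\G}\not\subset\F$, so that $\omega$ does not vanish on $T_{\overline{\G}}$ and the restriction $\omega|_F$ to a general fibre $F=\pi^{-1}(y)$ is a nonzero $1$-form.

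The core is then the fibrewise analysis. On a general fibre $F$ the rational first integrals are constant, so $\G|_F$ is transversely Lie over the complex Lie algebra $\mathfrak g_y$ with structure constants $\lambda^k_{ij}(y)$, and $\omega|_F$ defines a codimension one foliation $\F|_F$ containing $\G|_F$. By Proposition~\ref{P:subalgebra} the kernel $\mathfrak h_y=\{v\in\mathfrak g_y:\langle\omega,v\rangle=0\}$ is a codimension one subalgebra of $\mathfrak g_y$, and Lemma~\ref{L:LieTits} furnishes an ideal $\mathfrak I_y\subset\mathfrak h_y$ with $\mathfrak g_y/\mathfrak I_y\hookrightarrow\mathfrak{sl}_2$. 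Composing the tautological $\mathfrak g$-valued form $\sum_i\omega_i\otimes\xi_i$ with the projection onto $\mathfrak g_y/\mathfrak I_y$ and the trichotomy following Lemma~\ref{L:LieTits}, I obtain on $F$ a triple $\omega_0=\omega,\tau_1,\tau_2$ of $\C$-combinations of the $\omega_i|_F$ satisfying $d\omega_0=\omega_0\wedge\tau_1$, $d\tau_1=\omega_0\wedge\tau_2$, $d\tau_2=\tau_1\wedge\tau_2$; as the additive and affine models are special cases of the projective one (take $\tau_1=\tau_2=0$, respectively $\tau_2=0$), this already exhibits $\F|_F$ as transversely projective. Because every ingredient — the $\lambda^k_{ij}$, the $a_i$, the ideal $\mathfrak I_y$, and the embedding into $\mathfrak{sl}_2$ — varies rationally with $y$, these forms assemble into global rational $1$-forms $\omega_0=\omega,\tau_1,\tau_2$ on $X$, each a $\C(Y)$-linear combination $\tau_s=\sum_i p^{(s)}_i\omega_i$ of the $\omega_i$ with $p^{(s)}_i\in\pi^*\C(Y)$.

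Finally I would promote the fibrewise identities to identities on $X$. Differentiating $\tau_s$ produces, besides the ``algebraic'' part already matched on each fibre, the extra base terms $\sum_i dp^{(s)}_i\wedge\omega_i$ coming from the variation of the coefficients along $Y$; since the $dp^{(s)}_i$ are pull-backs of exact $1$-forms from $Y$ they lie in $\E_{\G}^1(0)$, so these terms lie in the $\C(Y)$-span of the $\omega_i\wedge\omega_j$. The crux is to verify that, using the integrability $\omega\wedge d\omega=0$ together with $d^2\omega=d^2\omega_k=0$, these base contributions reorganize so that the three Maurer--Cartan relations hold identically on $X$, and not merely along a general fibre; by rationality it then suffices to check them at the generic fibre. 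I expect this last step — controlling the base-direction terms and showing that the flag $\omega_0,\tau_1,\tau_2$ closes up into a genuine $\mathfrak{sl}_2$-triple over all of $X$ — to be the main obstacle, since it is exactly here that the transverse Lie structure along the fibres must be reconciled with its variation over $Y$; the situation in which this reorganization degenerates is precisely the pull-back alternative separated off above.
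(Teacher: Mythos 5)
Your fibrewise reduction is the same as the paper's: restrict to a general fibre $F$ of $\pi$, observe that $\restr{\G}{F}$ has only constant rational first integrals and is transversely Lie, separate off the case where $\F$ contains the fibres (the pull-back alternative), and apply Proposition \ref{P:subalgebra} together with Lemma \ref{L:LieTits} to see that $\restr{\F}{F}$ is transversely projective. Where you diverge is at the globalization, and that is where your proof has a genuine gap. The paper does not try to assemble the fibrewise $\mathfrak{sl}_2$-triples by hand: it notes that $\restr{\F}{F}$ contains $\restr{\G}{F}$, whose general leaf is Zariski dense, and then invokes \cite[Theorem 3.1]{MR4288634}, which is precisely the statement that a foliation whose restriction to the general fibre of a fibration is transversely projective with Zariski dense general leaf is itself transversely projective. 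That is a substantive external input, not a routine verification, and you have replaced it with a computation that you yourself flag as ``the main obstacle'' and do not carry out.

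The proposed computation does not close as stated, for two concrete reasons. First, your reduction ``by rationality it suffices to check them at the generic fibre'' is false: since $\delta=\trdeg\C(X/\G)$ of the directions in $\E_{\G}^1(0)$ are spanned by differentials of elements of $\pi^*\C(Y)$, the forms $\omega_1,\ldots,\omega_q$ become linearly \emph{dependent} over $\C(F)$ upon restriction to a fibre, so the vanishing of a $\C(X/\G)$-combination of the $\omega_i\wedge\omega_j$ on a general fibre only determines that combination modulo the ideal generated by $\pi^*\Omega^1_Y$. Consequently the relations $d\tau_1=\omega_0\wedge\tau_2$ and $d\tau_2=\tau_1\wedge\tau_2$ are obtained on $X$ only up to error terms invisible along the fibres, and one must correct $\tau_1,\tau_2$ by rational $1$-forms pulled back from $Y$; proving that such corrections exist is exactly the content of the theorem the paper cites. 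Second, the assertion that the Lie--Tits ideal $\mathfrak I_y$ and the embedding $\mathfrak g_y/\mathfrak I_y\hookrightarrow\mathfrak{sl}_2$ ``vary rationally with $y$'' is not justified: the isomorphism type of $\mathfrak g_y$ and the dimension of the maximal ideal contained in $\mathfrak h_y$ can jump with $y$, and Lemma \ref{L:LieTits} makes no canonical choice. Either supply these arguments or replace the last step by the appeal to \cite[Theorem 3.1]{MR4288634}, for which you should record explicitly that the general leaf of $\restr{\F}{F}$ is Zariski dense.
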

\begin{proof}
    Let $\pi : X \dashrightarrow Y$ be the rational map with irreducible fibers such that $\C(X/\G) = \pi^* \C(Y)$ given by Proposition \ref{P:corpo}. Let $F$ be (a projective smooth model of) a general fiber of $\pi$. The restriction of $\G$ to $F$ is a transversely Lie foliation with general leaf Zariski dense. 
    
    If an open subset of $F$ is contained in a leaf of $\F$ then, since $F$ is a general fiber,  $\F$ is the pull-back of a foliation on $Y$. 
    If instead $\F$ is generically transverse to $F$ then Proposition \ref{P:subalgebra} combined with Lemma \ref{L:LieTits} implies that $\restr{\F}{F}$ is a transversely projective foliation. Since $\restr{\F}{F}$ contains $\restr{\G}{F}$ which has Zariski dense general leaf, its general leaf is also Zariski dense. We are therefore in position to apply  \cite[Theorem 3.1]{MR4288634} to conclude that $\F$ is transversely projective.
\end{proof}

\subsection{Proof of Theorem \ref{THM:main}}
Let $S = \{ \F_1, \ldots, \F_{q+1}\} $ be a set of $q+1$ codimension one foliations, as in the statement of Theorem \ref{THM:main} and let $\G$ be the codimension $q$ foliation defined by their intersection. Corollary \ref{C:q+1} implies the existence of $\eta \in H^0(X, \Omega^1_{\G} \otimes \Mer_X)$ such that $\E^1_{\G}(\eta)$ defines $\G$. Recall that one of our assumptions is that $q\ge 3$.

If $\eta$ is not log-exact then Corollary \ref{C:Cerveau} implies that every foliation in $S$ is transversely affine and, in particular, transversely projective. 

If $\eta$ is log-exact then we can assume that $\eta =0$. Corollary \ref{C:quasitransvLie} implies that $\G$ is transversely parallelizable. Theorem \ref{T:transversely projective} implies that every foliation in $S$ is either transversely projective or, in the notation of the proof of Theorem \ref{T:transversely projective}, is the rational pull-back of a foliation on $Y$. Since $\dim Y = \trdeg \C(X/\G)$, at most $\trdeg \C(X/\G)$ foliations in $S$ are rational pull-backs from $Y$ as otherwise we would get a contradiction with the assumption that the intersection of any $q$ foliations in $S$ is equal to $\G$. Note that when $\trdeg \C(X/\G) = 1$, being a rational pull-back from $Y$ implies that the foliation is algebraically integrable and, in particular, transversely projective. The theorem follows. 
\qed

\section{Many implies infinetly many} \label{S:final}

This section is devoted to the proof of Theorem \ref{THM:infinitas}. For that, let $\G$ be a codimension $q$ foliation with $\C(X/\G)=\C$ and contained in a set $S$ of at least $q+1$ codimension one foliations. If $q = 2$ then the result follows from Theorem \ref{T:Cerveau}. From now on, assume that $q\ge 3$.

We will prove Theorem \ref{THM:infinitas} by assuming that the intersection of any $q$ foliations in $S$ has codimension $q$. This reduction will be possible because of the following lemma.

\begin{lemma}\label{lema:reducao}
Suppose that $\G$ is a foliation of codimension $q$ and $S$ a set of codimension one foliations containing $\G$. If $S$ has at least $\codim \G +1$ elements, then it has a subset $S'$ with the following property:
\begin{enumerate}
    \item \label{item:propriedade} The cardinality of $S'$, denoted by $\#S'$, is strictly greater than $1$, and the  intersection of any $\# S' - 1$ elements of $S'$ gives the same foliation, with codimension equal to $\# S'- 1$.
\end{enumerate}
\end{lemma}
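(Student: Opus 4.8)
The goal is to extract from $S$ a subset $S'$ in which the ``unlikely intersection'' phenomenon occurs in its cleanest form: every sub-collection of size $\#S'-1$ cuts out the same foliation, of the expected codimension $\#S'-1$. The strategy is to look at how codimensions of intersections behave as one removes foliations, and to locate a minimal configuration where the codimension fails to grow as expected. I would organize the argument around the following linear-algebra-over-$\C(X)$ viewpoint: each $\F_i \in S$ is represented by a rational $1$-form $\omega_i \in H^0(X, N^*_\G \otimes \Mer_X)$, and the codimension of the intersection of a subfamily $\{\F_i : i \in I\}$ equals the dimension over $\C(X)$ of the span of $\{\omega_i : i \in I\}$ (this is exactly the identification of conormal sheaves with $1$-forms used throughout Section~\ref{Section:invariant}).

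First I would set $m = \codim \G \le \#S - 1$ and observe that, since all the $\omega_i$ lie in $H^0(X,N^*_\G \otimes \Mer_X)$, which has dimension $q = \codim \G$ over $\C(X)$, any $q+1$ of them are $\C(X)$-linearly dependent; in particular the full span has some dimension $d \le q$. Next I would choose a maximal $\C(X)$-linearly independent subfamily, say $\omega_1, \ldots, \omega_d$ (reordering $S$), so that every remaining $\omega_j$ is a $\C(X)$-combination of these. Because $\#S \ge q+1 > d$, there is at least one foliation $\F_{d+1} \in S$ outside this independent set, giving a genuine dependence relation $\omega_{d+1} = \sum_{i=1}^{d} a_i \omega_i$ with rational coefficients $a_i$. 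As in the proof of Corollary~\ref{C:q+1}, no $a_i$ can vanish without forcing a dependence among fewer than $d$ of the $1$-forms; the key point I would make precise is that, by rescaling each $\omega_i$ (which does not change the foliation $\F_i$), one may normalize this single relation so that every nonzero coefficient equals $1$. Let $S'$ consist of the foliations appearing with nonzero coefficient in this normalized relation together with $\F_{d+1}$; then the defining $1$-forms of $S'$ satisfy a relation of the shape $\omega_{i_1} + \cdots + \omega_{i_s} = 0$ with any $s-1$ of them linearly independent.

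With $S'$ so constructed, verifying property (\ref{item:propriedade}) is then a matter of unwinding the relation. Writing $\#S' = s$, the $1$-forms defining the elements of $S'$ are $s$ vectors in $H^0(X,N^*_\G \otimes \Mer_X)$, any $s-1$ of which are $\C(X)$-linearly independent (so the intersection of any $s-1$ of the foliations has codimension exactly $s-1$), while the single relation $\sum \omega_{i_k}=0$ forces the span of all $s$ to again have dimension $s-1$. Hence the intersection of \emph{all} $s$ equals the intersection of any $s-1$ of them: each such sub-intersection is the codimension $s-1$ foliation cut out by the common $(s-1)$-dimensional span, and they all coincide. Finally $s > 1$ because a single relation involving a genuine dependence must contain at least two nonzero terms (indeed at least three, once one excludes the degenerate case $\omega_i = -\omega_j$, which would mean $\F_i = \F_j$, contradicting distinctness).

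\textbf{Main obstacle.} The delicate step is the normalization that lets one clear all nonzero coefficients to $1$ while remaining inside the set $S$ of \emph{given} foliations, rather than passing to new foliations defined by $\C(X)$-combinations. This works precisely because rescaling a single $\omega_i$ by a rational function does not alter $\F_i$, but I would need to argue that the coefficients $a_i$ in the chosen relation can all be absorbed into such rescalings \emph{simultaneously and consistently} across the one relation at hand. Concretely, starting from $\omega_{d+1} = \sum a_i \omega_i$, one replaces each $\omega_i$ (for $a_i \ne 0$) by $a_i \omega_i$ and $\omega_{d+1}$ by $-\omega_{d+1}$; since each of these is still a defining form for the corresponding element of $S$, the resulting relation $\sum_{a_i\ne 0}(a_i\omega_i) + (-\,(-\omega_{d+1})) = 0$ has all coefficients equal to $\pm 1$, and one absorbs the signs the same way. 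The remaining bookkeeping — that ``any $s-1$ independent'' is inherited from the maximality of the chosen independent family — is routine, so the heart of the lemma is this clean normalization of a single dependence relation.
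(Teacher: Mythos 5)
Your proof is correct, but it takes a genuinely different route from the paper's. The paper argues purely combinatorially: it considers the family $\mathcal B$ of subsets $T \subset S$ whose intersection has codimension strictly smaller than $\# T$, notes that $S$ itself lies in $\mathcal B$ (its intersection contains $\G$, hence has codimension at most $q < \# S$), and takes $S'$ to be an element of $\mathcal B$ of minimal cardinality; minimality alone then forces the intersection of $S'$ to have codimension exactly $\# S' - 1$ and forces every $(\# S' - 1)$-element subset to have intersection of codimension at least, hence exactly, $\# S' - 1$, so that all these intersections coincide with the intersection of the whole of $S'$. You instead translate the same matroid-theoretic content into linear algebra over $\C(X)$: you choose a maximal independent family among the defining $1$-forms and extract a minimal dependence relation (a circuit), whose support yields $S'$; the bridge is the identification of the codimension of an intersection with the $\C(X)$-dimension of the span of the conormal forms, which the paper's proof never needs to invoke. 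Two remarks. First, the normalization of all nonzero coefficients to $1$, which you single out as the main obstacle, is superfluous for this lemma: equality of the intersections depends only on the spans, which are unchanged by rescaling; that normalization is what Corollary \ref{C:q+1} needs for its eigenvector argument, not what Lemma \ref{lema:reducao} needs. Second, your aside that no $a_i$ can vanish ``as in Corollary \ref{C:q+1}'' is not available here, since this lemma carries no hypothesis forbidding dependences among fewer forms; your construction correctly sidesteps this by restricting $S'$ to the indices with nonzero coefficient, so the slip is harmless.
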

\begin{proof}
    Consider $\mathcal{B}$ be the family of subsets $S$ having the following property:  $T\in \mathcal{B}$ if the foliation given by the intersection of the elements of $T$ has codimension strictly smaller than $\#T$. Note that $S$ itself is in $\mathcal{B}$. 

    Among all elements of $\mathcal{B}$, choose one with the smallest cardinality and denote it by $S'$. We prove that $S'$ also has property \ref{item:propriedade}. By definition, elements of $\mathcal{B}$ cannot be singletons. Moreover, if the codimension of the intersection is at most $\# S'-2$, then any subset of $S'$ with $\# S'-1$ elements will be in $\mathcal{B}$. This contradicts the minimality of the cardinality of $S'$.
\end{proof}

Consider the  field of rational first integrals field of $\G$. The proof of Theorem \ref{THM:infinitas} proceeds by analyzing three cases based on the transcendental degree of $\C(X/\G)$ over $\C$: zero, one and at least two.

\begin{prop}\label{P:infinitas_trdeg0}
    Theorem \ref{THM:infinitas} is valid when $\trdeg \C(X/\G) = 0$.
\end{prop}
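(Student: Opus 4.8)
The plan is to reduce to the hypotheses of Theorem \ref{THM:Lie} and then exploit the Lie-theoretic description it provides. First, since $\trdeg \C(X/\G)=0$ means $\C(X/\G)=\C$ and $S$ has at least $q+1>1$ elements, I would apply Lemma \ref{lema:reducao} to extract a subset $S'\subseteq S$ with $p+1:=\#S'>1$ such that the intersection of any $p$ elements of $S'$ is the same foliation $\G'$, of codimension $p=\#S'-1$. Because the elements of $S$ are pairwise distinct, $\#S'=2$ is impossible (the intersection of a single foliation is itself), so $p\ge 2$. As each foliation of $S'$ contains $\G$, so does $\G'$, whence $T_{\G}\subseteq T_{\G'}$ and $\C(X/\G')\subseteq \C(X/\G)=\C$; thus $\C(X/\G')=\C$. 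The foliation $\G'$, together with the $p+1$ foliations of $S'$, now satisfies the hypotheses of Theorem \ref{THM:Lie} with $q$ replaced by $p\ge 2$, and it suffices to prove that $\G'$ is contained in infinitely many codimension one foliations, since every codimension one foliation containing $\G'$ also contains $\G$.

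Applying Theorem \ref{THM:Lie} to $\G'$ yields a dichotomy. In the first alternative, the codimension one foliations containing $\G'$ are identified with the Grassmannian of lines in $\C^{p}$, that is with $\mathbb P^{p-1}$; as $p\ge 2$ this is already an infinite family, and we are done. In the second alternative, $\G'$ is transversely Lie modeled over a non-abelian $p$-dimensional Lie algebra $\mathfrak g$, and the codimension one foliations containing $\G'$ are in natural bijection with the codimension one (Lie) subalgebras of $\mathfrak g$ (cf. Proposition \ref{P:subalgebra}). It therefore remains to prove that such a $\mathfrak g$ has infinitely many codimension one subalgebras. Here I would use that the $p+1$ foliations of $S'$ produce $p+1$ codimension one subalgebras $\mathfrak h_1,\ldots,\mathfrak h_{p+1}$ of $\mathfrak g$ for which any $p$ of them intersect in $0$, this being the translation of the property that any $p$ foliations of $S'$ intersect in $\G'$ (intersection of foliations corresponds to intersection of subalgebras under the identification, since $T_{\F\cap\F'}=T_{\F}\cap T_{\F'}$).

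The heart of the matter, and the step I expect to be the main obstacle, is the purely Lie-algebraic claim that a non-abelian complex Lie algebra $\mathfrak g$ admitting $p+1$ codimension one subalgebras in general position as above has infinitely many codimension one subalgebras. I would argue as follows. A codimension one subalgebra that happens to be an ideal necessarily contains $[\mathfrak g,\mathfrak g]$; if all of $\mathfrak h_1,\ldots,\mathfrak h_{p+1}$ were ideals, then $[\mathfrak g,\mathfrak g]$ would lie in the intersection of any $p$ of them, hence in $0$, forcing $\mathfrak g$ abelian, a contradiction. Thus some $\mathfrak h_j$ is a codimension one subalgebra that is not an ideal. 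By the Lie--Tits Lemma \ref{L:LieTits}, there is an ideal $\mathfrak I\subseteq \mathfrak h_j$ of $\mathfrak g$ with $\mathfrak g/\mathfrak I$ a subalgebra of $\mathfrak{sl}_2$; since $\mathfrak h_j$ is not an ideal we have $\mathfrak I\subsetneq \mathfrak h_j$, so $\dim \mathfrak g/\mathfrak I\ge 2$ and therefore $\mathfrak g/\mathfrak I$ is isomorphic to either $\mathfrak{aff}(\C)$ or $\mathfrak{sl}_2$. Both of these have infinitely many codimension one subalgebras: every line in the two-dimensional $\mathfrak{aff}(\C)$ is one, and the Borel subalgebras of $\mathfrak{sl}_2$ form a $\mathbb P^1$. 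Pulling these back along $\mathfrak g\to \mathfrak g/\mathfrak I$ yields infinitely many distinct codimension one subalgebras of $\mathfrak g$, hence infinitely many codimension one foliations containing $\G'$, and thus $\G$. This completes the proof.
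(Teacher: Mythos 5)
Your proof is correct and follows essentially the same route as the paper: the dichotomy of Theorem \ref{THM:Lie} (a projective space of integrable $1$-forms in one case, a transversely Lie structure in the other), followed by the Lie--Tits Lemma \ref{L:LieTits} to reduce to $\mathfrak{aff}(\C)$ or $\mathfrak{sl}_2$, each of which has infinitely many codimension one subalgebras. Your explicit argument that some $\mathfrak h_j$ is not an ideal (forcing $\dim \mathfrak g/\mathfrak I \ge 2$) is a slightly cleaner way of dispatching the case the paper handles by pairing two closed $1$-forms into a pencil, but the strategy is the same.
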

\begin{proof}
Corollary \ref{C:q+1} implies the existence of $\eta \in H^0(X, \Omega^1_{\G} \otimes \Mer_X)$ such that $\E^1_{\G}(\eta)$ defines $\G$. 
If $\eta$ is not log-exact then the corollary follows from Proposition \ref{P:integravel}. If instead $\eta$ is log-exact then $\G$ is transversely Lie modeled over a non-abelian $q$-dimensional Lie algebra $\mathfrak g$. According to Theorem \ref{THM:Lie}, the codimension one foliations containing $\G$ are in bijection with the codimension one Lie subalgebras of $\mathfrak g$. If $\mathfrak h \subset \mathfrak g$ is a codimension one subalgebra, Lemma \ref{L:LieTits} implies the existence of a ideal $\mathfrak I$, contained in $\mathfrak h$, such that the quotient $\mathfrak g/ \mathfrak I$ is isomorphic a Lie subalgebra of $\mathfrak{sl}_2$. If the quotient $\mathfrak g/ \mathfrak I$ is one-dimensional then the corresponding foliation is defined by a closed rational $1$-form.  Hence if two of the codimension one foliations containing $\G$ are like that, we get a full pencil of foliations containing $\G$. If the quotient $\mathfrak g/ \mathfrak I$ is two-dimensional then it is isomorphic the affine Lie algebra and, again, we have a full pencil of foliations containing $\mathcal G$. Finally, if the quotient 
$\mathfrak g/ \mathfrak I$ is isomorphic to $\mathfrak{sl}_2$, we get a {\it conic} of foliations containing $\G$. 
\end{proof}

\begin{prop}\label{P:tr=1}
    Theorem \ref{THM:infinitas} is valid when $\trdeg \C(X/\G) = 1$.
\end{prop}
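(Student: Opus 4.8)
The plan is to produce, out of one of the pairwise intersections, a codimension two foliation to which Theorem~\ref{T:Cerveau} applies in its transcendence-degree-one regime, and then to observe that the resulting pencil already furnishes infinitely many foliations through $\G$. After the reduction provided by Lemma~\ref{lema:reducao}, I may assume $S=\{\F_1,\dots,\F_{q+1}\}$ with $q\ge 3$ and that any $q$ of the $\F_i$ meet along $\G$. First I would fix a non-constant rational first integral $f\in\C(X/\G)$, which exists because $\trdeg\C(X/\G)=1$; since $f$ is a first integral, $df$ is a flat rational section of $N^*_\G$, so $df\in\E^1_\G(0)$ and $df\neq 0$. Corollary~\ref{C:q+1} provides $\eta$ with $\E^1_\G(\eta)$ generating $\G$, and since $\E^1_\G(0)\neq 0$, Lemma~\ref{L:unique generation} forces $\eta$ to be log-exact; after rescaling I may take $\eta=0$, so that $\E^1_\G(0)$ generates $\G$. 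Corollary~\ref{C:quasitransvLie} then shows that $\G$ is transversely parallelizable, and by Corollary~\ref{C:q+1} together with this normalization each $\F_i$ is defined by some $\omega_i\in\E^1_\G(0)$.

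Next I would bring in the Zariski closure $\overline\G$. Since $\trdeg\C(X/\G)=1$, $\overline\G$ is a codimension one foliation containing $\G$, defined by the closed form $df$ (equivalently, it is the foliation by fibres of the fibration $\pi:X\dashrightarrow Y$ onto the curve $Y$ with $\C(X/\G)=\pi^*\C(Y)$). Because the $\F_i$ are pairwise distinct, at least one of them, say $\F_1$, differs from $\overline\G$, i.e.\ $\omega_1\wedge df\neq 0$. I then set $\H:=\overline\G\cap\F_1$, the foliation defined by $df\wedge\omega_1$. It has codimension two, it is involutive (being an intersection of two involutive tangent sheaves), and it contains $\G$ because both $\overline\G$ and $\F_1$ do.

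The technical heart of the argument is to verify that $\E^1_\H(0)$ generates $\H$, so that Theorem~\ref{T:Cerveau} applies to $\H$ with the log-exact eigenvalue $0$. Concretely I would check that both $df$ and $\omega_1$ are flat for Bott's partial connection $\nabla_\H$ of $\H$ with eigenvalue $0$. For $df$ this is immediate from $d(df)=0$. For $\omega_1$ the point is that Bott-flatness descends along the inclusion $T_\H\subset T_\G$: writing $d\omega_1=a\wedge\omega_1+c\wedge df$ via Frobenius for $\H$, and using that $\nabla_\G(\omega_1)=0$ forces $i_v\,d\omega_1=0$ for every $v\in T_\G$, contraction of this identity with any $v\in T_\H\subset T_\G$ gives $a(v)\omega_1+c(v)\,df=0$, whence $\restr{a}{T_\H}=\restr{c}{T_\H}=0$ and $\nabla_\H(\omega_1)=0$. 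Thus $df,\omega_1\in\E^1_\H(0)$ and, being independent, they generate the rank-two sheaf $N^*_\H$. I expect this descent of flatness to the sub-foliation to be the main obstacle, since it is the only step that genuinely exploits the interplay between the two Bott connections, and it is also where the normalization $\eta=0$ is indispensable.

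Finally I would identify which alternative of Theorem~\ref{T:Cerveau} occurs. From $\G\subset\H\subset\overline\G$ one obtains $\C(X/\overline\G)\subseteq\C(X/\H)\subseteq\C(X/\G)$, and since the two outer fields both have transcendence degree one over $\C$, so does $\C(X/\H)$. Hence $\H$ falls into case~(\ref{I:Cerveau trdeg 1}) of Theorem~\ref{T:Cerveau}: the codimension one foliations containing $\H$ are parameterized by $\mathbb{P}^1(\C(X/\H))$. As $\C(X/\H)$ is an infinite field, there are infinitely many such foliations, and each of them contains $\H\supseteq\G$. This exhibits infinitely many distinct codimension one foliations containing $\G$, which is exactly the desired conclusion.
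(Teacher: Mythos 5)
Your overall strategy --- intersect $\overline{\G}$ with one of the $\F_i$ to get a codimension two foliation $\H$ containing $\G$, and then invoke Theorem~\ref{T:Cerveau} to produce a $\mathbb P^1(\C(X/\H))$ of foliations through $\H$ --- is genuinely different from the paper's argument, but it has a gap at exactly the step you flag as the technical heart. The inclusion you use there is backwards: since $\H$ \emph{contains} $\G$ and $q\ge 3$, we have $T_{\G}\subsetneq T_{\H}$, not $T_{\H}\subset T_{\G}$. Flatness for Bott's connection of $\H$ requires $i_v\,d\omega_1=0$ for every $v\in T_{\H}$, which is a strictly stronger condition than the hypothesis $\nabla_{\G}(\omega_1)=0$, which only controls contractions with $v\in T_{\G}$. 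Concretely, writing $d\omega_1=\theta\wedge\omega_1$ (Frobenius for the codimension one foliation $\F_1$), one has $\nabla_{\H}(\omega_1)=\restr{\theta}{T_{\H}}\otimes\omega_1$, and what you know is only $\restr{\theta}{T_{\G}}=0$. By Lemma~\ref{L:escritura} and Corollary~\ref{C:quasitransvLie}, $\theta$ is a $\C(X/\G)$-combination of a full basis $\omega_1,\dots,\omega_q$ of $\E^1_{\G}(0)$, and there is no reason for its components along $\omega_3,\dots,\omega_q$ --- the directions conormal to $\G$ but not to $\H$ --- to vanish. So $\omega_1$ is in general only an eigenvector of $\nabla_{\H}$ with some eigenvalue $\eta_1=\restr{\theta}{T_{\H}}$ that need not be log-exact.

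This is not a repairable bookkeeping slip: if $\eta_1$ is not log-equivalent to $0$, then by Lemma~\ref{L:E independente} the eigenspaces $\E^1_{\H}(0)\ni df$ and $\E^1_{\H}(\eta_1)\ni\omega_1$ are each one-dimensional over $\C(X)$ after extension of scalars, so no single eigenspace generates the rank-two sheaf $N^*_{\H}$ and the hypothesis of Theorem~\ref{T:Cerveau} fails for $\H$. (The rest of your argument --- that $\C(X/\H)=\C(X/\G)$ has transcendence degree one and that item~(\ref{I:Cerveau trdeg 1}) would yield infinitely many foliations through $\H\supset\G$ --- is fine, conditional on that hypothesis.) The paper circumvents this obstruction by a different route: it equips a member $\F$ of $S$ with a transversely projective triple $(\omega_0,\omega_1,\omega_2)$ via Theorem~\ref{THM:main}, shows (using the equivalence of projective structures, restriction to a general fiber of $f$, and \cite[Lemma 2.2 and Theorem 3.1]{MR4288634}) that either $\F$ is transversely affine --- in which case an explicit pencil through $\G$ exists --- or the triple can be chosen with all three forms conormal to $\G$, in which case the conic $\Omega_t=\omega_0+t\omega_1+t^2\omega_2$ supplies the infinite family.
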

\begin{proof}
    Consider a foliation $\F\in S$ without rational first integral.
    By Theorem \ref{THM:main}, $\F$ admits a transversely projective structure, that is, there exists a triple of rational $1$-forms $(\omega_0,\omega_1,\omega_2)$ such that $\omega_0$ defines $\F$ and the rational 1-form on $X\times \C$
    \[
    \Omega = dz + \omega_0+z\omega_1+z^2\omega_2
    \]
    is integrable.

    We recall the notion of equivalent projective structures from \cite[Section 2.2]{MR4288634}: The two transversely projective structures $(\omega_0,\omega_1,\omega_2)$ and $(\tilde\omega_0,\tilde\omega_1,\tilde\omega_2)$ are equivalent if there are rational functions $f,g\in \C(X)$ such that
    \begin{align*}
        \tilde\omega_0 &= f\omega_0\\
        \tilde\omega_1 &= \omega_1-\frac{df}{f} + g\omega_0\\
        \tilde\omega_2 &= \frac{\omega_2}{f} +g\omega_1 +g_2\omega_0 -dg.
    \end{align*}
    Also, according to \cite[Lemma 2.2]{MR4288634}, a foliation admitting two non-equivalent transversely projective structures must admit a transversely affine structure.

    For the rest of this proof we will work under the triple $(\omega_0,\omega_1,\omega_2)$ for which $\omega_0\in\E^1_\G(0)$. Since $d\omega_0=\omega_0\wedge\omega_1$, we have
    \[
    0 = \nabla(\omega_0) = \restr{-\omega_1}{T_\G}\otimes\omega_0,
    \]
    which implies that $\restr{-\omega_1}{T_\G}=0$, that is, the distribution defined by $\omega_1$ also contains the foliation $\G$.

    Assume that $\F$ is transversely affine. We analyze two distinct cases:
    For the first, given by $d\omega_0\neq 0$, consider the pencil $\omega_0+t\omega_1$.
    For the second, given by $d\omega_0=0$, consider the pencil $\omega_0+t\;df$. In both scenarios, the resulting family of foliations, parametrized by $t\in\C$, contains $\G$.

    We now work under the hypothesis that $\mathcal{F}$ is not transversely affine.
    
    Notice that by fixing any $t\in \C$, the 1-form $\Omega_t=\omega_0+t\omega_1+t^2\omega_2$ is integrable, since $\Omega_t$ is the pull-back of $\Omega$ by the inclusion map $X = X\times \{t\} \hookrightarrow X\times \C$.
    Thus if, for each $t$, the foliation given by $\Omega_t$ contains the foliation $\G$, then the proposition is proved.
    Note also that the intersection of all foliations given by the $1$-forms $\Omega_t$, $t \in \C$,  is foliation defined by the $1$-forms $\omega_0,\omega_1,\omega_2$, which has codimension at most three.

    To conclude the proof of Proposition \ref{P:tr=1} it suffices to stablish the following statement: if for every triple $(\omega'_0,\omega'_1,\omega'_2)$ equivalent to $(\omega_0,\omega_1,\omega_2)$ we have $\restr{\omega'_i}{T_\G} \neq 0$, for some $i=0,1,2$, then $\F$ admits a transversely affine structure.

    Let $f$ be a non-constant element of $\C(X/\G)$. For a general fiber $F$ of $f$ consider the pullback of the foliations in the set $S$ by the inclusion $i_F:F\to X$. It follows that $\G_F:=i_F^*\G$ is a foliation without non-constant first integrals. Consider the pullback $i_F^*\F$ of the foliation $\F$. The foliation $\G_F$ is generated by $\E^1_{\G_F}(0)$ because $\G$ is generated by $\E^1_\G(0)$.
    Then, by the proof of Theorem \ref{THM:main}, $i_F^*\F$ admits a transversely projective structure $(\alpha_0,\alpha_1,\alpha_2)$ with $\alpha_i \in \E^1_{\G_F}(0)$. Remark that the distributions defined by $\alpha_0,\alpha_1,\alpha_2$ all contain $\iota^*_F\G$. 

    Consider now the pullback of the triple $(\omega_0,\omega_1,\omega_2)$ by $i_F$. Suppose there is no $(\omega_0', \omega_1',\omega_2')$ projective triple for $\F$, equivalent to $(\omega_0,\omega_1,\omega_2)$, in which each form $\omega_i'$  defines a distribution containing $\G$.
    Then $(i^*_F\omega_0,i^*_F\omega_1,i^*_F\omega_2)$ is not equivalent to $(\alpha_0,\alpha_1,\alpha_2)$, and thus we apply \cite[Lemma 2.2]{MR4288634} to obtain a transversely affine structure for $i^*_F\F$.

    By applying \cite[Theorem 3.1]{MR4288634}, which states that $\F$ admits a transversely affine structure if its restriction to a general fiber of a rational map does, we obtain a transversely affine structure for $\F$, contradicting our assumption.
\end{proof}

\begin{prop}\label{P:tr>1}
    Theorem \ref{THM:infinitas} is valid when $\trdeg \C(X/\G) \geq 2$.
\end{prop}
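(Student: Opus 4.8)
The plan is to reduce to the transversely parallelizable situation and then argue by induction on $\delta=\trdeg\C(X/\G)$, the inductive step being restriction to a general level set of a single rational first integral. First I would apply Corollary \ref{C:q+1} to obtain $\eta\in H^0(X,\Omega^1_{\G}\otimes\Mer_X)$ with $\E^1_{\G}(\eta)$ generating $\G$. Since $q\ge 3$, if $\eta$ were not log-exact then Corollary \ref{C:zariski dense} would force $\C(X/\G)=\C$, contradicting $\delta\ge 2$; hence $\eta$ is log-exact and, after rescaling, $\E^1_{\G}(0)$ generates $\G$. By Corollary \ref{C:quasitransvLie} the foliation $\G$ is then transversely parallelizable, and the codimension one foliations containing $\G$ are exactly the integrable lines of $\mathbb P(\E^1_{\G}(0))$ over $K:=\C(X/\G)$. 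The goal becomes showing that this integrable locus is infinite.

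For the inductive step I would fix a non-constant $f\in K$ and restrict to a general level set $F=\{f=c\}$. Then $\G_F:=\restr{\G}{F}$ is transversely parallelizable of codimension $q-1\ge 2$ with $\trdeg\C(F/\G_F)=\delta-1$, and is generated by $\E^1_{\G_F}(0)$ because $\G$ is generated by $\E^1_{\G}(0)$ and $\restr{df}{F}=0$. Moreover every foliation of $S$ distinct from the one defined by $df$ restricts to a codimension one foliation containing $\G_F$, and for general $c$ these restrictions remain pairwise distinct, so $\G_F$ is contained in at least $q=\codim\G_F+1$ of them. Thus the hypotheses of Theorem \ref{THM:infinitas} hold for $\G_F$, and by induction — with base cases $\delta-1\in\{0,1\}$ supplied by Propositions \ref{P:infinitas_trdeg0} and \ref{P:tr=1}, and the codimension two instance by Theorem \ref{T:Cerveau} — the foliation $\G_F$ lies in infinitely many codimension one foliations on $F$.

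The decisive point is to promote this infinitude on a general fibre back to $X$; integrability of a $1$-form on $F$ does not imply integrability on $X$, so the lift must be organised by a transverse structure. Here I would follow the mechanism of Proposition \ref{P:tr=1}. Since $\delta\le q-1$, Theorem \ref{THM:main} yields a transversely projective $\F\in S$; choosing a projective triple $(\omega_0,\omega_1,\omega_2)$ with $\omega_0\in\E^1_{\G}(0)$ forces $\restr{\omega_1}{T_{\G}}=0$, and the slices $\Omega_t=\omega_0+t\omega_1+t^2\omega_2$ are all integrable. If $\restr{\omega_2}{T_{\G}}=0$ as well, every $\Omega_t$ defines a foliation containing $\G$, giving a conic of such foliations; and if $\F$ is transversely affine, then $\restr{\omega_1}{T_\G}=0$ makes the pencil $\omega_0+t\omega_1$ consist of foliations containing $\G$. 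So it suffices to establish the dichotomy: either an equivalent projective triple has all three forms vanishing on $T_{\G}$, or $\F$ is transversely affine. To prove it I would restrict to a general fibre of the full first-integral map $\pi:X\dashrightarrow Y$ of Proposition \ref{P:corpo}, where $\C(F/\G_F)=\C$, and compare $\iota_F^*(\omega_0,\omega_1,\omega_2)$ with the projective triple produced by the transversely Lie analysis of Theorem \ref{THM:Lie}; by \cite[Lemma 2.2]{MR4288634} non-equivalence produces a transversely affine structure for $\iota_F^*\F$, which \cite[Theorem 3.1]{MR4288634} lifts to $X$, while equivalence produces forms vanishing on $T_{\G_F}$ that, valid for the general fibre, force $\restr{\omega_2}{T_{\G}}=0$ on $X$.

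I expect the genuine obstacle to be exactly this passage from fibres to $X$: keeping the constructed conic or pencil algebraic and $\G$-compatible as the fibre varies, so that it assembles into an honest family of foliations containing $\G$ rather than merely fibrewise data. The remaining difficulty is the boundary case $\delta=q$, which falls outside the transversely projective regime since by Theorem \ref{THM:main} the set $S$ then consists entirely of pull-backs from a $q$-dimensional manifold $Y$; here I would instead prove directly, by an analogous induction on $\dim Y$ via restriction to a general hypersurface (the base case $\dim Y=3$ reducing the integrability condition to the vanishing of a single rational quadratic form, hence to a conic of integrable $1$-forms through four points in general position), that $Y$ carries infinitely many codimension one foliations, which pull back to infinitely many foliations containing $\G$.
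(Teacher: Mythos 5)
Your proposal does not close. You yourself flag the passage from the general fibre back to $X$ as ``the genuine obstacle'' and leave it open; the comparison of projective triples in your third paragraph is only sketched; and the treatment of the boundary case $\delta=q$ via ``a conic of integrable $1$-forms through four points in general position'' is not an argument. As written, the induction on $\delta$ never terminates in a proof, so there is a genuine gap at the decisive step you identified.

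More importantly, all of this machinery is unnecessary: the hypothesis $\trdeg \C(X/\G)\ge 2$ by itself already produces infinitely many codimension one foliations containing $\G$, with no reference to the set $S$, to Corollary \ref{C:q+1}, or to any transverse structure. This is the paper's proof, and it is two lines long. Pick two algebraically independent elements $f_1,f_2\in\C(X/\G)$. For each $t\in\C$ the function $f_1+tf_2$ is again a rational first integral of $\G$, so its differential is annihilated by $T_{\G}$ and the codimension one foliation defined by $d(f_1+tf_2)$ contains $\G$. These foliations are pairwise distinct because
\[
    d(f_1+tf_2)\wedge d(f_1+sf_2)=(s-t)\, df_1\wedge df_2 \, ,
\]
which is non-zero for $t\ne s$ by algebraic independence. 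Your reduction to the transversely parallelizable case, the induction via restriction to level sets, and the lifting of projective structures all address difficulties that simply do not arise when $\delta\ge 2$; they are the tools the paper reserves for the cases $\delta\in\{0,1\}$, which are handled by the other propositions of that section.
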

\begin{proof}
    If $\trdeg \C(X/\G)$ is at least two, then for any two algebraic independent elements $f_1,f_2$ of $\trdeg \C(X/\G)$ the pencil $\{f_1+tf_2|\; t\in\C\}$ is an infinite collection of first integrals for $\G$, each of them defines a different foliation.
\end{proof}

\begin{proof}[Proof of Theorem \ref{THM:infinitas}]
    It suffices to combine Propositions \ref{P:infinitas_trdeg0}, \ref{P:tr=1}, and \ref{P:tr>1}.
\end{proof}

\providecommand{\bysame}{\leavevmode\hbox to3em{\hrulefill}\thinspace}
\providecommand{\MR}{\relax\ifhmode\unskip\space\fi MR }
\providecommand{\MRhref}[2]{%
  \href{http://www.ams.org/mathscinet-getitem?mr=#1}{#2}
}
\providecommand{\href}[2]{#2}

\end{document}